\newcommand{\hlslashslash}{%
  \raisebox{.4ex}{%
    \scalebox{.7}{%
      \rotatebox[origin=c]{40}{$\boldsymbol{-}$}%
    }%
  }%
}
\newcommand{\wlslash}{%
  {%
   \vphantom{\mathsf{w}}%
   \hspace*{-2.5pt}\ooalign{\kern.05em\smash{\hlslashslash}\hidewidth\cr$\hspace*{3.8pt}\mathsf{w}\hspace*{-1pt}$\cr}%
   \kern.05em
  }%
}
\newcommand{\hrslashslash}{%
  \raisebox{.4ex}{%
    \scalebox{.7}{%
      \rotatebox[origin=c]{-40}{$\boldsymbol{-}$}%
    }%
  }%
}
\newcommand{\wrslash}{%
  {%
   \vphantom{\mathsf{v}}%
  \hspace*{1.5pt} \ooalign{\kern.05em\smash{\hrslashslash}\hidewidth\cr$\hspace*{-1.2pt}\mathsf{w}\hspace*{-1pt}$\cr}%
   \kern.05em
  }%
}
\def\titlerunning#1{\gdef\titrun{#1}}
\def\author#1{\gdef\autrun{\def\and{\unskip, }#1}\gdef\@author{#1}}
\def\address#1{{\def\and{\\\hspace*{18pt}}\renewcommand{\thefootnote}{}%
\footnote {#1}}%
\markboth{\autrun}{\titrun}} \makeatother
\def\email#1{e-mail: #1}
\def\subjclass#1{{\renewcommand{\thefootnote}{}%
\footnote{\emph{Mathematics Subject Classification (2010):} #1}}}
\def\keywords#1{\par\medskip
\noindent\textbf{Keywords.} #1}
\DeclareMathAlphabet{\mathpzc}{OT1}{pzc}{m}{it}
\definecolor{verde}{rgb}{0,.5,0}
\DeclareSymbolFont{bbold}{U}{bbold}{m}{n}
\DeclareSymbolFontAlphabet{\mathbbold}{bbold}
\newcommand\dd{\mathbbold{c}}
\newcommand\dl{\mbox{\textquoteright}\hspace*{-1pt}\mathbbold{c}}
\newcommand\dr{\mathbbold{c}\hspace*{-1.7pt}\mbox{\textquoteleft\hspace*{-0.8pt}}}
\def\crl#1{\ensuremath{|{#1}|_{\mathsf c}}}
\def\rk#1{\ensuremath{\mathop{\text{rank}}({#1})}}
\newcommand{\cev}[1]{\reflectbox{\ensuremath{\vec{\reflectbox{\ensuremath{#1}}}}}}
\newcommand{\bw}{\ensuremath{\mbox{\b{$\mathsf w$}}}}
\numberwithin{equation}{section}
\def\rk#1{\ensuremath{\mathop{\text{rank}}({#1})}}
\def\Z{{\mathbb Z}}
\newfont{\sss}{cmssi10 at 11pt}
\newfont{\bss}{cmssbx10 at 11pt}
\newfont{\tit}{cmitt10 at 11pt}
\newcommand{\pv}{pseudovariety}
\newcommand{\sm}{semigroup}
\newcommand{\Se}{{\bf S}}
\newcommand{\LI}{{\bf LI}}
\newcommand{\LG}{{\bf LG}}
\newcommand{\LV}{{\bf LV}}
\newcommand{\K}{{\bf K}}
\newcommand{\D}{{\bf D}}
\newcommand{\G}{{\bf G}}
\newcommand{\V}{{\bf V}}
\newcommand{\FV}{F_{\mathsf V}}
\newcommand {\pvv} {\mathbf V}
\newcommand{\kb}{\bar{\kappa}}
\newcommand{\kbt}{$\bar{\kappa}$-term}
\newcommand{\om}[2] {{\overline{\Omega}}_{#1}{\mathbf #2}}
\newcommand{\ome}[2] {{{\Omega}}_{#1}{\mathbf #2}}
\newcommand{\omek}[3] {{{\Omega}}^{#1}_{#2}{\mathbf #3}}
\newcommand{\mfrg}[3] {{#1}:{#2}\mathop{\hbox{\kern5pt$\circ$\kern-12pt\raise0.1pt\hbox
{$\longrightarrow$}}}{#3}}
\newtheorem{theorem}{Theorem}[section]
\newtheorem{proposition}[theorem]{Proposition}
\newtheorem{corollary}[theorem]{Corollary}
\newtheorem{lemma}[theorem]{Lemma}
\newtheorem{claim}{Claim}
\newtheorem{remark}[theorem]{Remark}
\newtheorem{example}[theorem]{Example}
\newenvironment{definition*}{\begin{trivlist}\item[\hskip
    \labelsep{\bf Definition\quad}]}%
  {\hfill\qed\end{trivlist}}
\newenvironment{notation*}{\begin{trivlist}\item[\hskip
    \labelsep{\bf Notation\quad}]}%
  {\end{trivlist}}
  \def\qed{{\unskip\nobreak\hfil\penalty50\hskip .001pt\hbox{}%
      \nobreak\hfil
      \vrule height 1.2ex width 1.1ex depth -.1ex
      \parfillskip=0pt\finalhyphendemerits=0\medbreak}}
\newenvironment{proof}{\begin{trivlist}\item[\hskip%
     \labelsep{\bf Proof.\quad}]}%
 {\hfill\qed\rm\end{trivlist}}
\qed\end{trivlist}}%
\begin{document}

\titlerunning{The $\kappa$-word  problem for ${\bf LG}$}

\title{\bf The word problem for $\kappa$-terms over the\\ pseudovariety of local groups}

\author{J. C. Costa %
  \and %
  C. Nogueira %
  \and %
  M. L. Teixeira%
}

\date{September 4, 2015}

\maketitle

\address{ %
  J. C. Costa \& M. L. Teixeira: %
  CMAT, Dep.\ Matem\'{a}tica e Aplica\c{c}\~{o}es, Universidade do Minho, Campus
  de Gualtar, 4710-057 Braga, Portugal; %
  \email{jcosta@math.uminho.pt, mlurdes@math.uminho.pt} %
  \and %
  C. Nogueira: %
   CMAT, Escola Superior de Tecnologia e Gest\~ao,
  Instituto Polit\'ecnico de Leiria,
  Campus 2, Morro do Lena, Alto Vieiro, 2411-901
   Leiria, Portugal; %
   \email{conceicao.veloso@ipleiria.pt} %
}

\subjclass{20M05, 20M07}

\begin{abstract}
   In this paper we study the $\kappa$-word problem for the pseudovariety ${\bf LG}$ of local
groups, where $\kappa$ is the canonical signature consisting of the multiplication and the pseudoinversion. We
solve this problem by transforming each arbitrary $\kappa$-term $\alpha$ into another one  called the canonical
form of $\alpha$ and by showing that different canonical forms have different interpretations over $\LG$. The
procedure of construction of these canonical forms consists in applying elementary changes determined by a
certain set $\Sigma$ of $\kappa$-identities. As a consequence, $\Sigma$ is a  basis of $\kappa$-identities for
the $\kappa$-variety generated by $\LG$.

  \keywords{Local group, pseudovariety, finite semigroup, implicit signature, word problem, $\kappa$-term, canonical form.}
\end{abstract}

\section{Introduction}

The notion  of a {\em pseudovariety} has played a key role in the
classification of finite semigroups. Recall that a pseudovariety of semigroups  is a class of finite semigroups
 closed under taking subsemigroups, homomorphic
images and finite direct products. The semidirect product operator on pseudovarieties of semigroups has received particular attention,
as it  allows to decompose complicated pseudovarieties into simpler ones, and
which in turn is central to the applications of semigroup theory in computer science.
Among the   most studied semidirect products of pseudovarieties are those of
the form $\V * \D $, where \V\ is any pseudovariety and \D\ is the pseudovariety of all finite semigroups whose idempotents are right
zeros~\cite{Straubing:1985,Tilson:1987, AlmeidaAzevedoTeixeira:02}.
 If \V\ is a pseudovariety of monoids, then \LV\ denotes the pseudovariety of all semigroups $S$
whose local submonoids are in \V\ (i.e., $eSe\in\V$ for all idempotents $e$ of $S$). In general, $\V *\D$ is a subpseudovariety of \LV\
but under certain conditions on the pseudovariety \V\ the equality holds~\cite{Straubing:1985, TherienWeiss:1985, Tilson:1987}. In particular, for the   pseudovariety \G\ of all finite groups,
 \LG\  is the  class of finite local groups
 and it is well-known that $\LG=\G*\D$~\cite{Stifler:1973}.

Many applications involve solving the membership
problem for specific pseudovarieties. A pseudovariety for which this
is possible is said to be \emph{decidable}. However, the semidirect
product does not preserve decidability
\cite{Auinguer&Steinberg:2003, Rhodes:1999}, and thus it is worth
investigating stronger properties of the factors under which
decidability of the semidirect product is guaranteed. This is the approach followed by
 Almeida and Steinberg that lead to the notion of \emph{tameness}~\cite{Almeida&Steinberg:2000a,Almeida&Steinberg:2000b}.

For a signature (or a type) $\sigma$ of algebras and  a class $\mathcal C$ of algebras of type $\sigma$ (i.e., $\sigma$-algebras),
 the {\em $\sigma$-word problem
for  $\mathcal C$}  consists in determining whether two given elements of the term algebra of type $\sigma$ (i.e., $\sigma$-terms) over an alphabet
have the same interpretation over every $\sigma$-algebra of $\mathcal C$.
In the context of the study of tameness of pseudovarieties of semigroups,
 it is necessary to study  the decidability of
 the $\sigma$-word problem over a pseudovariety \V , where $\sigma$ is a set of implicit operations on semigroups containing
 the multiplication, called an {\em implicit signature},
 since that  is one of the properties required for   \V\  to be tame.
For pseudovarieties of aperiodic semigroups it is common to use the signature $\omega$ consisting of the
multiplication and the $\omega$-power. For instance, the $\omega$-word problem is already solved for the
pseudovarieties ${\bf A}$ of all finite aperiodic semigroups~\cite{McCammond:2001,Zhiltsov:1999}, ${\bf J}$ of
${\cal J}$-trivial semigroups~\cite{Almeida:1990}, \LI\ of locally trivial semigroups~\cite{Almeida&Zeitoun:2003}, ${\bf R}$ of ${\cal R}$-trivial
semigroups~\cite{Almeida&Zeitoun:2007} and ${\bf LSl}$ of local semilattices~\cite{Costa:2001}. For
non-aperiodic cases, in which the $\omega$-power is not enough, it is common to use the  signature $\kappa$
consisting of the multiplication and the $(\omega-1)$-power, usually called the {\em canonical signature}. We
will use an extension of $\kappa$, denoted $\kb$, defined by $\kb=\{ab , a^{\omega +q}: q\in\mathbb Z\}$. It is
easy to realize that the $\kb $-word problem is equivalent to the $\kappa $-word problem.
 As examples of pseudovarieties for which the $\kappa$-word problem is solved, we cite the
pseudovarieties $\Se$ of all finite semigroups~\cite{Costa:2014} and  ${\bf CR}$ of completely regular
semigroups~\cite{Almeida&Trotter:2001}.

This paper  is a continuation of the work initiated in~\cite{Costa&Nogueira&Teixeira:2015}. In that paper, the
authors have shown that ${\bf LG}$ and $\Se$ verify exactly the same identities involving $\kb$-terms of rank 0
or 1, and have given a proof (alternative to that contained in~\cite{Costa:2014}) of the decidability of those
$\kb$-identities. As one recalls, the {\em rank} of a $\kb$-term is the maximum number of nested
$(\omega+q)$-powers in it. The present paper completes the proof of the decidability of the  $\kb$-word problem
(and, as a consequence, of the $\kappa$-word problem) over the pseudovariety \LG. We prove first that this
problem can be reduced to consider only identities involving $\kb$-terms with rank at most 2. Next, a canonical
form for rank 2 $\kb$-terms over \LG\ is defined, thus extending the notion of canonical  $\kb$-terms given
in~\cite{Costa&Nogueira&Teixeira:2015} for rank 0 and 1. Finally, for canonical $\kb$-terms $\alpha$ and
$\beta$, we show that the $\kb$-identity $\alpha=\beta$ holds over $\LG$  if and only if $\alpha$ and $\beta$
are the same $\kb$-term.
 Since it is shown that each $\kb$-term can be algorithmically transformed into a unique canonical form  with the same value over
${\bf LG}$, to test whether a $\kb$-identity $\alpha=\beta$ holds over $\LG$ it then suffices to verify if the
canonical forms of the $\kb$-terms $\alpha$ and $\beta$ are equal.

A fundamental tool in our work is that of $\mathbbold{q}$-\emph{root} of a  $\kb$-term $\alpha$ that is in rank
1 canonical form or rank 2 semi-canonical form.  The notion of a rank $i+1$ (with $i\geq 0$) semi-canonical form
was introduced by the first author in~\cite{Costa:2014} as a rank $i+1$ $\kb$-term $\alpha$ whose 2-expansion
(i.e., the $\kb$-term obtained from $\alpha$ by replacing the exponents $\omega+q$ of rank $i+1$ powers by 2) is
a rank $i$ canonical form (over $\Se$). For a certain parameter $\mathbbold{q}_{\alpha}$, which is a positive
integer and depends only on the $\kb$-term $\alpha$, and for any given
$\mathbbold{q}\geq\mathbbold{q}_{\alpha}$,  the $\mathbbold{q}$-\emph{root} of $\alpha$ is a well-determined
word ${\widetilde{\mathsf w}}_\mathbbold{q}({\alpha})$, over a certain alphabet $\mathsf{V}\cup
\mathsf{V}^{-1}$, which is reduced in the free group $\FV$ generated by $\mathsf{V}$. The main property is that,
if $\alpha$ and $\beta$ are $\kb$-terms
 as above and ${\widetilde{\mathsf w}}_\mathbbold{q}({\alpha})$ and ${\widetilde{\mathsf w}}_\mathbbold{q}({\beta})$ are the $\mathbbold{q}$-roots
  of $\alpha$ and $\beta$ with $\mathbbold{q}\geq \mbox{max}\{\mathbbold{q}_{\alpha},\mathbbold{q}_{\beta}\}$, then \LG\
  satisfies $\alpha=\beta$  if and only if ${\widetilde{\mathsf w}}_\mathbbold{q}({\alpha})={\widetilde{\mathsf w}_{\mathbbold q}}({\beta})$. Since
  an arbitrary rank 2 $\kb$-term can be algorithmically transformed into one in semi-canonical form (see~\cite{Costa:2014}),
   the above result provides an alternative criterion to decide the equality of $\kb$-identities over the class of all
    finite local groups. Moreover, each  word ${\widetilde{\mathsf w}}_\mathbbold{q}({\alpha})$ is obtained as the reduced form in
    the free group $\FV$ of another word ${\mathsf w}_\mathbbold{q}({\alpha})$, called the $\mathbbold{q}$-\emph{outline} of
    $\alpha$. The reduction process of an outline ${\mathsf w}_\mathbbold{q}({\alpha})$ into the root ${\widetilde{\mathsf w}}_\mathbbold{q}({\alpha})$
was fundamental to us in the definition of a canonical form for rank 2 $\kb$-terms over \LG\ since it served as
a guide to some of the simplifications that should be operated at the $\kb$-term level. Informally speaking, if
a $\kb$-term $\alpha$ may be transformed into another $\kb$-term $\beta$ and the outline ${\mathsf
w}_\mathbbold{q}({\beta})$ is ``closer'' than the outline ${\mathsf w}_\mathbbold{q}({\alpha})$ to their common
reduced form
 ${\widetilde{\mathsf w}}_\mathbbold{q}({\alpha})$($= {\widetilde{\mathsf w}}_\mathbbold{q}({\beta})$), then $\beta$ should be considered to be
``simpler'' than $\alpha$.  The notion of $\mathbbold{q}$-outline, introduced  here for $\kb$-terms over $\LG$, plays a similar role as a more general notion of superposition homomorphism for pseudowords that was used by Almeida and Azevedo~\cite{Almeida&Azevedo:1993} to provide a representation of the
free pro-$(\V*\D)$ semigroup over $A$ (see~\cite[Theorem 10.6.12]{Almeida:1992}).

 The paper is organized as follows. Section~\ref{section:preliminaries} recalls basic notions on finite semigroup theory and
 set the basic  notation and terminology for $\kb$-terms. In Section~\ref{section:cis}, we prove that the $\kb$-word problem
 over \LG\ can be reduced to $\kb$-terms with rank at most 2.  Section~\ref{section:algorithm_normal_form_rank2} describes the process
 of construction of a canonical form for rank 2 $\kb$-terms interpreted over \LG . The $\mathbbold{q}$-outline and the $\mathbbold{q}$-root of a $\kb$-term are introduced  in Section~\ref{section:characterizing_kterms}, and their fundamental properties are exhibited. Finally, in Section~\ref{section:main results}, we present and prove the main results of the paper.

\section{Preliminaries}\label{section:preliminaries}
 This section introduces briefly the most essential preliminaries, including some terminology and notation. We assume familiarity with
 basic results of the theory of
   pseudovarieties  and implicit operations. For further details and  general
background  see~\cite{Almeida:1992,  Rhodes&Steinberg:2009}. For the main definitions and basic results about
combinatorics on words, the reader is referred to~\cite{Lothaire:2002}.

\subsection{Words, pseudowords and $\kb$-terms}
In this paper, we consider a finite alphabet $A$ provided with a total order. The free semigroup (resp.\ the
free monoid) generated by $A$ is denoted by $A^+$ (resp.\ $A^*$). An element $w$ of $A^*$ is called a (finite)
word and its length is represented by $|w|$. The empty word is denoted by $\epsilon$ and its length is $0$. A
word is said to be {\em primitive} if it cannot be written in the form $u^n$ with $n>1$.  Two words $u$ and $v$
are {\em conjugate} if there are words $w_1,w_2\in A^*$ such that $u=w_1w_2$ and $v=w_2w_1$. A {\em Lyndon word}
is a primitive word which is minimal in its conjugacy class, for the lexicographic order that extends to $A^+$
the order on $A$.

 For a \pv\ $\pvv$,  a  {\em pro-}\V\  \sm\  is a compact semigroup which is residually in \V .
   We denote by  $\om{A}{\V}$ the  pro-\V\  \sm\  freely   generated by an alphabet $A$ and denote by $\ome{A}{\V}$ the free semigroup over \V\ generated by $A$, which is a subsemigroup  of $\om{A}{\V}$. The
   elements of  $\om{A}{\V}$, usually called {\em pseudowords} (over \V),  are naturally interpreted as
    {\em ($A$-ary) implicit operations} (operations that commute with homomorphisms between semigroups of \V ). A {\em pseudoidentity}
      is a formal equality between two pseudowords over the pseudovariety \Se\ of all finite semigroups.

 Given an element $s$ of a compact topological semigroup, the closed
subsemigroup generated by $s$ contains a unique idempotent, denoted
$s^{\omega}$. For $q\in\mathbb N$,   $s^{\omega+q} \ (=s^\omega s^q)$ belongs to the maximal closed subgroup containing
$s^{\omega}$, and its group inverse is denoted by $s^{\omega-q} (=(s^q)^{\omega-1})$.
  The following examples of  implicit operations play an important role in the next
 sections: the binary implicit operation {\em multiplication} interpreted as the \sm\
   multiplication on each  profinite  \sm , and, for each $q\in\mathbb Z$, the unary implicit
   operation  {\em $(\omega +q)$-power} which,
  for a profinite  \sm\ $S$, sends $s\in S$ to $s^{\omega +q}$.

  We denote by $\kb$ the implicit signature $\kb=\{ab , a^{\omega +q}: q\in\mathbb Z\}$.
  Every profinite semigroup has a natural structure of a $\kb$-semigroup,
via the interpretation of implicit operations as operations on profinite semigroups. The free $\kb$-algebra
generated by $A$ in the variety defined by the identity $x(yz)=(xy)z$ will be denoted by $T_A^{\kb}$ and its
elements will be called
 \emph{$\kb$-terms}. Sometimes we will omit the reference to the signature $\bar\kappa$ simply by referring to an element
of $T_A^{\kb}$ as a term.  Terms of the form $\pi^{\omega+q}$ will be called \emph{limit terms}, and $\pi$ and
$\omega+q$ will be called, respectively, its \emph{base} and its \emph{exponent}. For convenience, we allow the
empty term which is identified with the empty word $\epsilon$.

\subsection{Portions of a $\kb$-term}

 The {\em rank} of a $\kb$-term $\alpha\in T_A^{\kb}$ is the maximum number $\rk \alpha$  of nested infinite powers in it. So, the $\kb$-terms
  of rank $0$ are the words from $A^*$ and a $\kb$-term of rank $i+1$ is an
expression $\alpha$ of the form
$$\alpha=\rho_0\pi_1^{\omega+q_1}\rho_1\cdots \pi_n^{\omega+q_n}\rho_n,$$
where $n\geq 1$,  $\rk {\rho_j}\leq i$,  $\rk {\pi_\ell}= i$ and $q_\ell\in\mathbb Z$. We
call this form the \emph{rank configuration} of $\alpha$. The number $n$ is said to be the
$(i+1)$-\emph{length} of $\alpha$.  The subterms $\rho_0\pi_1^{\omega+q_1}$, $\pi_n^{\omega+q_n}\rho_n$ and
$\pi_j^{\omega+q_j}\rho_j \pi_{j+1}^{\omega+q_{j+1}}$ will be called, respectively, the \emph{initial portion},
the \emph{final portion} and the \emph{crucial portions} of $\alpha$. For a positive integer $p$, the \emph{$p$-expansion} of $\alpha$ is
the rank  $i$ $\kb$-term
$$\alpha^{(p)}=\rho_0\pi_1^p\rho_1\cdots \pi_n^p\rho_n.$$

Suppose that $i=0$, whence $\rk {\alpha}=1$. The $\omega$-terms $\rho_0\pi_1^{\omega}$, $\pi_n^{\omega} \rho_n$
and $\pi_j^{\omega}\rho_j \pi_{j+1}^{\omega}$ will be said to be, respectively, the \emph{initial
$\omega$-portion}, the \emph{final $\omega$-portion} and the \emph{crucial $\omega$-portions} of $\alpha$. In case $i=1$, so that $\rk {\alpha}=2$, the (rank 1) \emph{initial $\omega$-portion}, \emph{final $\omega$-portion} and \emph{crucial $\omega$-portions} of $\alpha$ are, respectively, the \emph{initial $\omega$-portion}, \emph{final $\omega$-portion} and \emph{crucial $\omega$-portions} of the 2-expansion $\alpha^{(2)}$ of $\alpha$.  For example, if
$\alpha= b(ab^\omega a)^{\omega-1}bc( c^{\omega-1}aa (bc)^{\omega-2})^{\omega-1}a^{\omega+1}$,
 then $bab^\omega$ and $a^{\omega}$  are the initial and the final $\omega$-portions, respectively, and $b^\omega aab^\omega$, $b^\omega abc c^{\omega}$,
 $c^{\omega} aa(bc)^{\omega}$, $(bc)^{\omega} c^{\omega}$ and $(bc)^{\omega} a^{\omega}$  are the crucial $\omega$-portions of $\alpha$.

\subsection{$\kb$-identities}
A  $\kb$-\textit{identity} over $A$ is a formal equality $u=v$ with $u,v\in T_A^{\kb}$. For a pseudovariety \V ,
we denote by $\omek{\kb}{A}{\V}$ the free $\kb$-\textit{semigroup} generated by $A$ in the variety of
$\kb$-semigroups generated by \V\ and notice that it is the $\kb$-subsemigroup of $\om{A}{\pvv}$ generated by
$A$. Elements of $\omek{\kb}{A}{\V}$ are called $\kb$-\textit{words} over \V . The unique ``evaluation''
homomorphism of $\kb$-semigroups $T_A^{\kb}\to\omek{\kb}AV$ that sends each letter $a\in A$ to itself is
denoted~$\varepsilon_{A,{\V}}^{\kb}$. Hence the  {\em $\kb$-word problem for \V } consists in determining
whether two given $\kb$-terms $\pi,\rho\in T_A^{\kb}$ satisfy the equality $\varepsilon_{A,\V}^{\kb}
(\pi)=\varepsilon_{A,\V}^{\kb}( \rho)$, i.e., whether $\pi$ and $\rho$ represent the same $\kb$-word of
$\omek{\kb} {A}{\pvv}$. If so, we write $\V\models \pi=\rho$, as usual. Note that the elements of $\kb$, when
viewed as pseudowords over $\V$, are elements of $\omek{\kappa}{A}{\V}$ since  the following identities hold
over every finite semigroup:  $x^{\omega+q}= {x}^{\omega-1}x^{q+1}$ and $x^{\omega-q}= {(x^q)}^{\omega-1}=
(x^{\omega-1})^q$, where $q\in\mathbb N_0$. So, informally speaking, we can say that  $\kb$ and $\kappa$ have
the same expressive power,
  in  the sense that $\omek{{\kb}}{A}{\V}$ is isomorphic to $\omek{\kappa}{A}{\V}$ and, consequently, the $\kb $-word problem
  is equivalent to the $\kappa $-word problem.

\subsection{Rewriting rules for $\kb$-terms over $\Se$}
 The following set $\Sigma_{\Se}$ of $\kb$-identities (where $p,q\in\Z$
and $n\in\mathbb N$)
\begin{numcases}{}
  (x^{\omega+p})^{\omega+q}=x^{\omega+pq},\label{eq:id_k-terms1}\\
 (x^{n})^{\omega+q}=x^{\omega+nq},\label{eq:id_k-terms2} \\
 x^{\omega+p} x^{\omega+q}=x^{\omega+p+q},\label{eq:id_k-terms3} \\
  x^{\omega+q}x^n =x^{\omega+q+n},\quad x^n  x^{\omega+q}=x^{\omega+q+n}, \label{eq:id_k-terms4}\\
(xy)^{\omega+q}x =x(yx)^{\omega+q},\label{eq:id_k-terms5}
\end{numcases}
holds in the pseudovariety $\Se$ of all finite semigroups.
Notice  that, using~\eqref{eq:id_k-terms3}--\eqref{eq:id_k-terms5}, it is easy to deduce the identities
\begin{equation}
\begin{split}
x^\omega(x^{\omega+p}y)^{\omega+q}=(x^{\omega+p}y)^{\omega+q},\quad\quad&(x^{\omega+p}y)^{\omega+q}x^\omega=(x^{\omega+p}yx^\omega)^{\omega+q},
\label{eq:xomega_fora-dentro}\\[1mm]
(yx^{\omega+p})^{\omega+q}x^\omega=(yx^{\omega+p})^{\omega+q},\quad\quad  & x^\omega(yx^{\omega+p})^{\omega+q}=(x^\omega yx^{\omega+p})^{\omega+q}.
\end{split}
 \end{equation}

Each $\kb$-\textit{identity}
$r=(u=v)$ can be seen as two rewriting rules $\vec r:u\rightarrow v$ and
$\cev r:v\rightarrow u$ for the transformation of  $\kb$-terms into other  $\kb$-terms. If we rewrite a $\kb$-term $\pi$  interpreting
 a $\kb$-identity~(2.$i$), with $i\in\{1,2,3,4\}$, as a rewriting rule from left to right and applying it to a subterm of $\pi$, we say
 that we make a~{(2.$i$)-contraction}. The transformations resulting from interpreting the $\kb$-identities as rewriting rules on the opposite direction
    are called  {\it expansions}. An application of the identity~\eqref{eq:id_k-terms5}
   from left to right or from right to left will be called a  {\it shift right} and a  {\it shift left}, respectively.

 We will talk about the rank of a transformation of $\kb$-terms using a $\kb$-identity $\alpha=\beta$
as the number ${\rm max}\{\rk \alpha,\rk \beta\}$. For example, if we rewrite
    $a b^{\omega+1} b (ca^{\omega+1})^{\omega-1}ca^{\omega+1}$ as $a b^{\omega+1} b (ca^{\omega+1})^{\omega}$,
     or as  $a b^{\omega+2}  (ca^{\omega+1})^{\omega-1}ca^{\omega+1}$,
    making right~\eqref{eq:id_k-terms4}-contractions,  we say that it was made a rank 2
    contraction in the first case, and a rank 1 contraction in the second one.

\subsection{Canonical forms for $\bar\kappa$-terms over $\Se$}\label{subsection:Canonical_forms_for_S}

The first author has shown in~\cite{Costa:2014} that the above set $\Sigma_{\Se}$ is enough to derive from an
arbitrary $\kb$-term $\alpha$ any other  $\beta$ such that $\Se\models \alpha=\beta$. In particular,
$\Sigma_{\Se}$ is used to reduce $\alpha$ to its  $\Se$ canonical form.

Here, we briefly recall the (recursive) definition of the canonical forms (over $\Se$).
The rank 0  \kbt s are all considered to be canonical forms. Assuming that the rank~$i$ canonical forms are defined, a rank~$i+1$ canonical form
  is a $\kb$-term $\alpha=\rho_0\pi_1^{\omega+q_1}\rho_1\cdots \pi_n^{\omega+q_n}\rho_n$ in rank configuration such that:
\begin{enumerate}
\item[(a)] the $2$-expansion $\alpha^{(2)}=\rho_0\pi_1^2\rho_1\cdots \pi_n^2\rho_n$ of $\alpha$ is a rank~$i$ canonical form;\vspace*{-2mm}
\item[(b)] each $\pi_j$ is a Lyndon term of rank~$i$;\vspace*{-2mm}
\item[(c)] no $\pi_j$ is a suffix of  $\rho_{j-1}$;\vspace*{-2mm}
\item[(d)] no $\pi_j$ is a prefix of some term $\rho_{j}\pi_{j+1}^\ell$ with $\ell\geq 0$.
\end{enumerate}

 A \emph{semi-canonical form} is a $\kb$-term that verifies condition~(a) above.
  The (semi-)canonical forms enjoy the following usefull property. A term $\alpha$ is in (semi-)canonical form if and only if every subterm of $\alpha$ is in (semi-)canonical form, if and only if the initial portion, the final portion  and all of the crucial portions of $\alpha$ are in (semi-)canonical form.

 The algorithm to determine the canonical form of any $\kb$-term is described recursively on the rank of the term and consists in two major steps. The first step reduces the given $\kb$-term to a semi-canonical form and the second step completes the calculation of the canonical form. As referred above, in this paper we will only need to use $\kb$-terms of rank at most 2. On the other hand, all rank 1 $\kb$-terms are semi-canonical forms and, for the construction of the rank 2 canonical forms over \LG,  we may already depart from a  semi-canonical rank 2 $\kb$-term.

\subsection{Local groups}\label{subsection:local_groups}
A local group  $S$ is a semigroup such that $eSe$ is a group for each idempotent $e$ of $S$. Equivalently, we
may say that $S$ is a local group if and only if $S$ has no idempotents or $S$ has a completely simple minimal
ideal containing all its idempotents~\cite[Proposition 2.1]{Costa&Nogueira&Teixeira:2015}. Groups, completely
simple and  nilpotent semigroups are examples of local groups. The following is a list of 
 some important pseudovarieties of local groups, defined by pseudoidentities according to Reiterman's theorem,  that we will use below:
 \vspace{-2mm}\begin{itemize}
  \item $\textbf{K}=[\! [x^{\omega}y=x^{\omega} ]\!]$ and $\textbf{D}=[\! [yx^{\omega}=x^{\omega} ]\!]$ are  the
    classes  of all finite semigroups whose idempotents are left zeros and right zeros, respectively;\vspace{-1mm}

     \item $\textbf{G}=[\! [x^{\omega}y=yx^{\omega}=y ]\!]$  is the   class of all finite groups;\vspace{-1mm}

     \item $\textbf{LI}=[\! [x^{\omega}yx^{\omega}=x^{\omega}]\!]$ and $\textbf{LG}=[\![(x^{\omega}yx^{\omega})^{\omega}=x^{\omega}]\!]$ are the
     classes of all finite locally trivial semigroups and local groups, respectively.
\end{itemize}

 Recall that \LI\ is the join of \K\ with \D . Therefore, a pseudoidentity $\alpha=\beta$ holds in \LI\ if and only if it holds
 in both \K\ and \D.  In particular, when $\alpha$ and $\beta$ are  canonical forms over $\Se$, it is easy to verify that $\alpha=\beta$ holds in \LI\ if and only if $\alpha$ and $\beta$ have the same initial and final $\omega$-portions. We also recall that \G\ and \LI\ are subpseudovarieties of \LG, but \LG\ is not the join of \G\ with \LI.
Hence, if a $\kb$-identity (in general, a pseudoidentity) $\alpha=\beta$ holds in \LG, then it holds in both \G\
and \LI\  but the converse implication is not valid. It is well known that if a pseudovariety $\V$ contains  \G\
or \LI, then it does not satisfy any non-trivial identity. So, in particular,
 we may identify the \LG-free semigroup $\ome{A}{\LG}$ with $A^+$.

In~\cite{Costa&Nogueira&Teixeira:2015} the authors defined a class
 of local groups denoted by $\mathcal{S}(G,L,{\mathsf f})$ in which
  $G$ is a  group, $L\subseteq A^+$ is a factorial language (i.e., a language that is closed under taking non-empty factors)
   and   ${\mathsf f}:L\cup \ddot{L}\rightarrow
G$  is a map, where $\ddot{L}$ is the subset of $A^+\setminus L$ formed by the words whose proper factors belong
to $L$. The underlying set of  $\mathcal{S}(G,L,{\mathsf f})$   is
 $L\cup (L^1\times G\times L^1)$, where  $L$ is the set of non-regular elements, $G$ is isomorphic to the (unique) maximal subgroup,
  $L^1\times G\times L^1$ is the underlying set of the minimal ideal, and ${\mathsf f}$
 serves to define the semigroup operation.
 Moreover, the minimal ideal is a Rees matrix semigroup
$\mathcal{M}[G;L^1,L^1;P]$, where  $P=\big(\hat{\mathsf f}(uv)\big)_{u,v\in L^1}$ and $\hat{\mathsf
f}:A^*\rightarrow G$ is defined as follows: $\hat{\mathsf f}(\epsilon)= 1_G$, $\hat{\mathsf f}(w)= {\mathsf
f}(w_0){\mathsf f}(\ddot w_1){\mathsf f}(w_1){\mathsf f}(\ddot w_2)\cdots  {\mathsf f}(\ddot w_m){\mathsf
f}(w_m)$ where ${sc}_L[w]=(w_0,\ddot w_1,w_1,\ldots,\ddot w_m,w_m)$, $w_0,\ldots ,w_m \in L$ and $\ddot
w_1,\ldots,\ddot w_m\in \ddot{L}$. See Section~2 of~\cite{Costa&Nogueira&Teixeira:2015} for more details on the
computation of $\ddot{L}$, and of the sequence of coordinates
 ${sc}_L[w]$ of a word $w$ determined by a factorial language $L$, and on the
definition of the multiplication on $\mathcal{S}(G,L,{\mathsf f})$. We have also constructed  a finite local group
$S_{\pi,\rho}$  of the form  $\mathcal{S}(G,L,{\mathsf f})$, associated to each pair $(\pi,\rho)$  of rank 1 canonical $\kb$-terms,
such that $\LG\models \pi=\rho$ if and only if $S_{\pi,\rho}\models \pi=\rho$.

\section{Some properties of $\kb$-terms over $\LG$}\label{section:cis}
In this section, we show some features of $\kb$-terms interpreted on finite local groups.  We prove in special
that the word problem for these terms reduces to consider terms of rank at most 2.

\subsection{Rewriting rules for $\kb$-terms over $\LG$}
Let us consider the set of $\kb$-identities $\Sigma=\Sigma_{\Se}\cup\{(x^{\omega}y
x^{\omega})^{\omega}=x^{\omega}\}$, the union of $\Sigma_{\Se}$ with the singular set containing the
 $\kb$-identity $(x^{\omega}y x^{\omega})^{\omega}=x^{\omega}$ which defines \LG. As one notices, the left side of this identity is a rank 2
 $\kb$-term while the term in the right side has rank 1. This is the key identity for the transformation of $\kb$-terms into other ones of rank at most 2.
    Notice that the authors proved
in~\cite{Costa&Nogueira&Teixeira:2015} that the rank 0 and rank 1 canonical $\kb$-terms over \Se\ are also canonical over
\LG, in the sense that \LG\ does not identify two distinct  canonical forms. In the sequel we will show that the
set $\Sigma$ is sufficient to reduce any $\kb$-term to its \LG\ canonical form, which is a $\kb$-term  of rank
at most 2. In particular, we will construct an algorithm to compute the \LG\ canonical form of a  rank 2 $\kb$-term.

 We  say that two $\kb$-terms $\alpha$ and $\beta$ are
 \emph{$\Sigma$-equivalent} (or, simply, \emph{equivalent}) when $\Sigma\vdash \alpha=\beta$, that is, when
 the $\kb$-identity $\alpha=\beta$ is a (syntactic) consequence of  $\Sigma$. Notice that, obviously, if $\alpha$ and $\beta$ are \emph{$\Sigma$-equivalent}, then
$\LG\models \pi=\rho$. Our goal is to prove that the converse implication also holds. We will do this 
by showing that each rank 2 term can be transformed into a
$\Sigma$-equivalent canonical form and by proving that,
 if two given $\LG$ canonical forms are equal over $\LG$ then
they are precisely the same $\kb$-term.

Let $\pi$ be a $\kb$-term of rank at least 1. Then $\pi$ is of the form $\pi=ux^{\omega+q}w$
for some integer $q$ and some $\kb$-terms $u$, $x$ and $w$ with $x$ non-empty.
By~\eqref{eq:id_k-terms3}, it follows that  $\pi$ may be transformed into $ux^{\omega}x^{\omega+q}w$. Therefore
$\pi$ is $\Sigma$-equivalent (it is $\Sigma_\Se$-equivalent to be more precise) to some $\kb$-term of the
form $ux^{\omega}v$, and we will often use this fact without further reference. In particular, using notably~\eqref{eq:xomega_fora-dentro} and the $\kb$-identity $(x^{\omega}y x^{\omega})^{\omega}=x^{\omega}$, we may derive
\begin{equation}\label{eq:pi_omega1_pi}
\pi^{\omega+1}=u(x^{\omega}vu)^{\omega}x^{\omega}v=u(x^{\omega}vux^{\omega})^{\omega}v=ux^{\omega}v=\pi.
\end{equation}
\subsection{Reduction to rank 2}
 Notice that the $\kb$-identities $(x^{\omega}y
x^{\omega})^{\omega}=x^{\omega}(y x^{\omega})^{\omega}=(x^{\omega}y)^{\omega} x^{\omega}$ are derived from
$\Sigma_{\Se}$ and that, for arbitrary integers $p$ and $q$, $(x^{\omega+p}y x^{\omega+q})^{\omega}=x^{\omega}$
is a consequence of $\Sigma$. It is useful to notice the following variations of this $\kb$-identity, which may
be deduced easily from this equation  using~\eqref{eq:xomega_fora-dentro},
\begin{equation}\label{eq:id_k-terms6b}
x^{\omega+p}  (yx^{\omega+q})^{\omega}= x^{\omega+p}=(x^{\omega+q}y)^{\omega}x^{\omega+p}.
\end{equation}

Now, from these ones we deduce a peculiar property of exponents that, in certain conditions and
 with a change of sign, may shift from  inside to outside of $(\omega-1)$-powers and vice-versa,
\begin{equation}\label{eq:inside-outside}
x^{\omega+p} (yx^{\omega+q})^{\omega-1} = x^{\omega+p-q}  (yx^{\omega})^{\omega-1},\quad (x^{\omega+q} y)^{\omega-1}x^{\omega+p} = (x^{\omega} y)^{\omega-1}x^{\omega+p-q}.
\end{equation}
Indeed, we deduce  the first identity as follows (the second one being proved by symmetry)
$$\begin{array}{rl}
x^{\omega+p} (yx^{\omega+q})^{\omega-1}\hspace*{-2mm}&=x^{\omega+p-q} (x^{\omega+q}y)^{\omega-1}x^{\omega+q} =x^{\omega+p-q}
(x^{\omega+q}y)^{\omega-1}x^{\omega+q}(yx^{\omega})^{\omega} \\
&=x^{\omega+p-q} (x^{\omega+q}y)^{\omega}x^{\omega}(yx^{\omega})^{\omega-1} =x^{\omega+p-q}  (yx^{\omega})^{\omega-1}.
\end{array}$$

We gather in the following proposition a few $\kb$-identities exhibiting cancelation properties that will be important
in the reduction process. The second identity is an improvement of~\eqref{eq:id_k-terms6b} and is more suitable
for certain applications.
\begin{proposition} The following $\kb$-identities are consequences of $\Sigma$, for all  $p,q,r,s\in\Z$,
\begin{alignat}{2}
x^{\omega+p}  y(z^{\omega+q}wx^{\omega+r}y)^{\omega-1} z^{\omega+s}&=
x^{\omega+p-r}  (z^{\omega}wx^{\omega})^{\omega-1} z^{\omega+s-q},\label{eq:id_shortening} \\
x^{\omega+p}  y(x^{\omega+q}y)^{\omega-1} x^{\omega+r}&= x^{\omega+p-q+r},\label{eq:id_elimination}\\
(x^{\omega+p}y)^{\omega-1}x^{\omega+q}(zx^{\omega+r})^{\omega-1}&=
(x^{\omega}zx^{\omega+p-q+r}yx^{\omega})^{\omega-1}.\label{eq:id_agglutination}
\end{alignat}
 \end{proposition}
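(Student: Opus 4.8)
The plan is to read each of the three identities in the free group first (setting every $x^{\omega}$ and $z^{\omega}$ to the identity, i.e. $x^{\omega+k}\mapsto x^{k}$): \eqref{eq:id_elimination} then asserts $x^{p}y(x^{q}y)^{-1}x^{r}=x^{p-q+r}$, \eqref{eq:id_shortening} is a two-sided version of the same cancellation, and \eqref{eq:id_agglutination} asserts $(x^{p}y)^{-1}x^{q}(zx^{r})^{-1}=(zx^{p-q+r}y)^{-1}$. This reveals what each derivation must accomplish: transport the inner exponents to the outside, annihilate the surplus letters, and, for \eqref{eq:id_agglutination}, merge two inverses into one. The only admissible tools are the shift \eqref{eq:id_k-terms5} (cyclic rotation of a power's base), the inside--outside rules \eqref{eq:inside-outside}, the absorption rules \eqref{eq:id_k-terms6b} and \eqref{eq:xomega_fora-dentro}, and the elementary power laws \eqref{eq:id_k-terms1}--\eqref{eq:id_k-terms4}. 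I would prove \eqref{eq:id_elimination} first and then use it as a lemma for \eqref{eq:id_agglutination}.

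For \eqref{eq:id_elimination} the computation is direct. First shift the leading $y$ into the power, $y(x^{\omega+q}y)^{\omega-1}=(yx^{\omega+q})^{\omega-1}y$, then use the first rule of \eqref{eq:inside-outside} to externalize the exponent $q$, turning $x^{\omega+p}(yx^{\omega+q})^{\omega-1}$ into $x^{\omega+p-q}(yx^{\omega})^{\omega-1}$; shifting back gives $x^{\omega+p-q}y(x^{\omega}y)^{\omega-1}x^{\omega+r}$. Now insert an idempotent $x^{\omega}$ in front of $y$ (legitimate by \eqref{eq:id_k-terms3}) and collapse $x^{\omega}y(x^{\omega}y)^{\omega-1}=(x^{\omega}y)^{\omega}$ via \eqref{eq:id_k-terms4}; the absorption rule \eqref{eq:id_k-terms6b} then eats the trailing factor, $(x^{\omega}y)^{\omega}x^{\omega+r}=x^{\omega+r}$, and \eqref{eq:id_k-terms3} finishes with $x^{\omega+p-q}x^{\omega+r}=x^{\omega+p-q+r}$.

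For \eqref{eq:id_shortening} I would run the same opening moves: shift the external $y$ into the power and apply \eqref{eq:inside-outside} once on the left (externalizing $r$) and once on the right (externalizing $q$), reducing the left-hand side to $x^{\omega+p-r}\,y(z^{\omega}wx^{\omega}y)^{\omega-1}\,z^{\omega+s-q}$. After inserting the idempotents $x^{\omega}$ and $z^{\omega}$ next to the two flanking powers, everything reduces to the single identity $x^{\omega}y(\beta y)^{\omega-1}z^{\omega}=x^{\omega}\beta^{\omega-1}z^{\omega}$ with $\beta=z^{\omega}wx^{\omega}$; this surplus-$y$ cancellation is the heart of the matter. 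I would settle it by a multiply-and-recover argument inside the maximal subgroup of $x^{\omega}$: shifting $y(\beta y)^{\omega-1}\beta=(y\beta)^{\omega}$ and absorbing via \eqref{eq:id_k-terms6b} shows that right-multiplying the left-hand side by $\beta$ yields $x^{\omega}$, while $\beta\beta^{\omega-1}z^{\omega}=\beta^{\omega}z^{\omega}=z^{\omega}$ (again \eqref{eq:id_k-terms6b}) lets me rewrite the left-hand side as itself times $z^{\omega}=\beta\beta^{\omega-1}z^{\omega}$ and regroup to obtain exactly $x^{\omega}\beta^{\omega-1}z^{\omega}$.

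For \eqref{eq:id_agglutination} I would use \eqref{eq:inside-outside} to bring both powers to bases flanked by $x^{\omega}$ and to merge the middle exponent, reducing the left-hand side to $(x^{\omega}y)^{\omega-1}x^{\omega+m}(zx^{\omega})^{\omega-1}$ with $m=q-p-r$. Writing the right-hand side as $R^{\omega-1}$ for $R=x^{\omega}zx^{\omega+p-q+r}yx^{\omega}$, the defining identity of \LG\ gives $R^{\omega}=x^{\omega}$, so it suffices to check that $R$ times the reduced left-hand side equals $x^{\omega}$: absorbing the inner $x^{\omega}$ exposes the block $x^{\omega-m}y(x^{\omega}y)^{\omega-1}x^{\omega+m}$, which collapses to $x^{\omega}$ by the already-proven \eqref{eq:id_elimination}, and a last use of \eqref{eq:id_k-terms6b} through $x^{\omega}(zx^{\omega})^{\omega}=x^{\omega}$ finishes it; since a leading $x^{\omega}$ leaves the left-hand side unchanged, multiplying $R\cdot\mathrm{LHS}=x^{\omega}$ on the left by $R^{\omega-1}$ gives $\mathrm{LHS}=R^{\omega-1}$. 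The recurring obstacle in all three cases is exactly this elimination of letters that survive in a general semigroup but are annihilated in a local group; it is overcome uniformly by passing into the maximal subgroup $G_{x^{\omega}}$, where the defining identity forces $(x^{\omega}\cdots x^{\omega})^{\omega}=x^{\omega}$, and recovering the desired form through the absorption rules.
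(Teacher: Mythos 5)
Your derivations are correct, and each step you invoke is an honest application of the identities \eqref{eq:id_k-terms1}--\eqref{eq:id_k-terms5}, \eqref{eq:xomega_fora-dentro}, \eqref{eq:id_k-terms6b} and \eqref{eq:inside-outside}, so the whole argument stays within syntactic consequence of $\Sigma$ as required. However, your route differs from the paper's in two respects. First, the logical order is reversed: the paper proves \eqref{eq:id_shortening} first by a four-line direct computation --- it inserts the factor $(z^{\omega+q}wx^{\omega+r})^{\omega}$ after $x^{\omega+p}$ (free by \eqref{eq:id_k-terms6b} read backwards), shifts to convert $(\,\cdot\,)^{\omega}y(\,\cdot\,y)^{\omega-1}$ into $(\,\cdot\,)^{\omega-1}(\,\cdot\,y)^{\omega}$, absorbs the resulting $\omega$-power into $z^{\omega+s}$, and only then externalizes the exponents with \eqref{eq:inside-outside} --- and obtains \eqref{eq:id_elimination} as an immediate specialization, whereas you prove \eqref{eq:id_elimination} directly and then attack \eqref{eq:id_shortening} separately. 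Second, for the two harder identities you use a uniform ``multiply-and-recover'' scheme: compute $L\beta=x^{\omega}$ (resp.\ $R\cdot L'=x^{\omega}=R^{\omega}$), then recover $L$ by writing $L=L\beta\cdot\beta^{\omega-1}z^{\omega}$ (resp.\ $L'=R^{\omega-1}\cdot RL'$), which is the equational shadow of inverting inside the maximal subgroup; the paper instead runs a single forward rewriting chain in each case (its proof of \eqref{eq:id_agglutination} does contain a kindred move, inserting $(x^{\omega}zx^{\omega-q}yx^{\omega})^{\omega}$ and shifting, but never isolates the ``product equals the idempotent'' step). Your scheme is more systematic and makes the group-theoretic content transparent --- notably, it exhibits \eqref{eq:id_agglutination} as a corollary of \eqref{eq:id_elimination} --- at the cost of a few extra bookkeeping steps (checking $x^{\omega}L'=L'$ and $R^{\omega-1}x^{\omega}=R^{\omega-1}$ via \eqref{eq:xomega_fora-dentro}); the paper's chains are shorter but less reusable.
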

 \begin{proof} The deduction of~\eqref{eq:id_shortening} can be made using $\Sigma_{\Se}$,~\eqref{eq:id_k-terms6b} and~\eqref{eq:inside-outside}  as
follows

$$\begin{array}{rl}
x^{\omega+p}  y(z^{\omega+q}wx^{\omega+r}y)^{\omega-1} z^{\omega+s}\hspace*{-2mm}&=x^{\omega+p}(z^{\omega+q}wx^{\omega+r})^{\omega}  y(z^{\omega+q}wx^{\omega+r}y)^{\omega-1} z^{\omega+s}\\
&=x^{\omega+p}(z^{\omega+q}wx^{\omega+r})^{\omega-1}  (z^{\omega+q}wx^{\omega+r}y)^{\omega} z^{\omega+s}\\
&=x^{\omega+p}(z^{\omega+q}wx^{\omega+r})^{\omega-1} z^{\omega+s}
\\&=x^{\omega+p-r}(z^{\omega}wx^{\omega})^{\omega-1} z^{\omega+s-q}.
\end{array}$$

The second identity is an immediate consequence of the first one. For~\eqref{eq:id_agglutination}, we prove
$(x^{\omega}y)^{\omega-1}x^{\omega+q}(zx^{\omega})^{\omega-1}=
 (x^{\omega}zx^{\omega-q}yx^{\omega})^{\omega-1}$ which is a simpler and, clearly,
 equivalent condition. Using~\eqref{eq:inside-outside} in the first identity below, we have
\begin{alignat*}{2}
(x^{\omega}y)^{\omega-1}x^{\omega+q}(zx^{\omega})^{\omega-1}\!
&=(x^{\omega}y)^{\omega-1}x^{\omega}(zx^{\omega-q})^{\omega-1}
&\quad & \\
    & = (x^{\omega}y)^{\omega-1}(x^{\omega}zx^{\omega-q})^{\omega-1}x^{\omega}&\quad & \\
   & =(x^{\omega}y)^{\omega-1}(x^{\omega}zx^{\omega-q})^{\omega-1}(x^{\omega}zx^{\omega-q}yx^{\omega})^{\omega}
   &\quad & \\
     & =(x^{\omega}y)^{\omega-1}(x^{\omega}zx^{\omega-q})^{\omega}yx^{\omega}(x^{\omega}zx^{\omega-q}yx^{\omega})^{\omega-1}    &\quad & \\
    &  = (x^{\omega}y)^{\omega-1}x^{\omega}yx^{\omega}(x^{\omega}zx^{\omega-q}yx^{\omega})^{\omega-1}  \\
     &=x^{\omega}(x^{\omega}zx^{\omega-q}yx^{\omega})^{\omega-1}\\
      &=(x^{\omega}zx^{\omega-q}yx^{\omega})^{\omega-1}.\end{alignat*}
   This proves the proposition.
 \end{proof}

It is also useful to notice the following properties.
\begin{corollary}\label{cor:id_shortening2}
Let $\tau$ and $\sigma$ be $\kb$-terms with rank at least 1.
\begin{enumerate}
  \item If  $\LI\models\tau=\sigma$, then   $\Sigma\vdash    \sigma(\tau\sigma)^{\omega-1}= \tau^{\omega-1}$.  \label{eq:id_shortening2}
   \item  If  $\K \models\tau=\sigma$, then $\Sigma\vdash \sigma^{\omega-1}   \tau^{\omega-1}= (\tau^2\sigma)^{\omega-1}\tau   $. \label{agglutinationK}
     \item  If  $\D \models\tau=\sigma$, then $\Sigma\vdash \sigma^{\omega-1}   \tau^{\omega-1}= \sigma (\tau\sigma^2)^{\omega-1}$. \label{agglutinationD}
\end{enumerate}
\end{corollary}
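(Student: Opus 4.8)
The three identities are all valid over \LG\ for a simple structural reason: a $\kb$-term of rank at least $1$ maps into the completely simple minimal ideal of any finite local group, hence behaves like a group element, and indeed $\Sigma$ already proves $\pi^{\omega+1}=\pi$ for every such $\pi$ (see~\eqref{eq:pi_omega1_pi}), so that $\pi^{\omega-1}$ plays the role of an inverse. In the free group computation the three equalities are immediate: for instance in part~(a), $\sigma(\tau\sigma)^{-1}=\sigma\sigma^{-1}\tau^{-1}=\tau^{-1}$. The content is therefore to produce explicit $\Sigma$-derivations, and the hypotheses $\LI\models\tau=\sigma$, $\K\models\tau=\sigma$, $\D\models\tau=\sigma$ serve precisely to force the relevant idempotents to coincide so that $\tau$, $\sigma$ and their products lie in a common subgroup.

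The plan is first to normalise. Since $\Sigma_{\Se}\subseteq\Sigma$, I may replace $\tau$ and $\sigma$ by their canonical forms over $\Se$ without leaving the $\Sigma$-equivalence class. The hypothesis $\K\models\tau=\sigma$ then says that these canonical forms have the same initial $\omega$-portion, $\D\models\tau=\sigma$ that they have the same final $\omega$-portion, and $\LI\models\tau=\sigma$ gives both. Writing the common initial $\omega$-portion as $A=u x^{\omega}$ and the common final $\omega$-portion as $B=z^{\omega}w$ (using~\eqref{eq:id_k-terms3} to split off the first and last infinite powers, and absorbing the — possibly different — integer exponents into the interior), I obtain factorisations $\tau=A\mu B$ and $\sigma=A\nu B$ in which $A$ ends in the idempotent $x^{\omega}$, $B$ begins with the idempotent $z^{\omega}$, and only the middles $\mu,\nu$ differ. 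For parts~(b) and~(c) only one of $A,B$ is available. The degenerate case in which $\tau$ has a single infinite power (so that $A$ and $B$ overlap) is treated separately, the identities then following from~\eqref{eq:id_elimination}.

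With these factorisations in hand, parts~(b) and~(c) are left--right dual and follow from the agglutination identity~\eqref{eq:id_agglutination}. The point is that the common leading idempotent $x^{\omega}$ (for~(b), under $\K$) lets me insert a factor $x^{\omega}$ between the two $(\omega-1)$-powers of $\sigma^{\omega-1}\tau^{\omega-1}$ — using $\pi^{\omega+1}=\pi$ to create $\sigma^{\omega}$ and then~\eqref{eq:xomega_fora-dentro} and~\eqref{eq:id_k-terms6b} to move it into position — thereby matching the pattern $(x^{\omega+p}y)^{\omega-1}x^{\omega+q}(z x^{\omega+r})^{\omega-1}$ of~\eqref{eq:id_agglutination}; collapsing that power and tracking the exponents yields $(\tau^2\sigma)^{\omega-1}\tau$. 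Part~(c) is the mirror image, using the common trailing idempotent $z^{\omega}$ under $\D$. For part~(a) I have both $A$ and $B$, so the base $\tau\sigma=A\mu B A\nu B$ carries an idempotent $x^{\omega}$ near its front and $z^{\omega}$ near its back; after rotating with the shift identity~\eqref{eq:id_k-terms5} and normalising the junctions by~\eqref{eq:inside-outside} and~\eqref{eq:id_k-terms6b}, the expression $\sigma(\tau\sigma)^{\omega-1}$ matches the left-hand side of the shortening identity~\eqref{eq:id_shortening}, one application of which erases the whole middle and returns $\tau^{\omega-1}$.

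The main obstacle I anticipate is not the final algebraic collapse but the normalisation step: turning the \emph{semantic} hypotheses $\K/\D/\LI\models\tau=\sigma$ into genuinely \emph{common} syntactic factorisations $A\mu B$, $A\nu B$ while staying inside $\Sigma$-provability (recall $\Sigma$ is complete for \LG, not for \K, \D\ or \LI, so one may not simply assert $\Sigma\vdash\tau=\sigma$). Care is needed because the shared $\omega$-portions determine the bases $x,z$ and the boundary words $u,w$ but not the integer exponents, which \K\ (respectively \D) cannot see; these must be pushed into $\mu,\nu$. That bookkeeping, together with the degenerate overlap case, is where the routine but delicate work lies.
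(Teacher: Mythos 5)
Your proposal is correct and takes essentially the same route as the paper: it converts the semantic hypotheses over $\K$, $\D$ and $\LI$ into $\Sigma_{\Se}$-factorisations of $\tau$ and $\sigma$ sharing the relevant initial and/or final $\omega$-portions, and then applies the shortening identity~\eqref{eq:id_shortening} for part~(a) and the agglutination identity~\eqref{eq:id_agglutination} for parts~(b) and~(c), exactly as the paper's derivations do (including the exponent bookkeeping via $\tau^{\omega-1}=(\tau^{2})^{\omega-1}\tau$ that produces the base $\tau^{2}\sigma$). The only superfluous element is the separately flagged ``degenerate'' case of a single infinite power, which needs no special treatment since such a term still factors as $ux^{\omega}\tau'y^{\omega}v$ with $x=y$ using~\eqref{eq:id_k-terms3}.
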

\begin{proof}
 Suppose that $\LI\models \tau=\sigma$. Then $\tau$ and $\sigma$ are $\Sigma_{\Se}$-equivalent, respectively, to $\kb$-terms  of the form
 $ux^\omega \tau' y^\omega v $ and $ux^\omega \sigma'y^\omega v$.  Therefore, using $\Sigma_\Se$ and~\eqref{eq:id_shortening}, one derives
     $$\sigma(\tau\sigma)^{\omega-1}=ux^\omega\sigma'(y^\omega vux^\omega\tau'y^\omega vux^\omega\sigma')^{\omega-1} y^\omega v=ux^\omega\tau'(y^\omega vux^\omega\tau'y^\omega vux^\omega\tau')^{\omega-1} y^\omega v=\tau^{\omega-1},$$
  thus showing~\ref{eq:id_shortening2}.

 Now suppose that $\K \models\tau=\sigma$. Then $\tau$ and $\sigma$ are $\Sigma_{\Se}$-equivalent to $\kb$-terms  of the form
 $ux^\omega \tau' $ and $ux^\omega \sigma'$, respectively. So, the deduction of~\ref{agglutinationK} can be done, using $\Sigma_\Se$  and~\eqref{eq:id_agglutination}, as follows
   \begin{alignat*}{2}\sigma^{\omega-1}\tau^{\omega-1} & =(ux^\omega \sigma')^{\omega-1}(ux^\omega \tau'ux^\omega \tau')^{\omega-1}ux^\omega \tau'\\
   & =  u(x^\omega \sigma'u)^{\omega-1}x^\omega( \tau'ux^\omega \tau'ux^\omega)^{\omega-1} \tau'\\
  &= u(x^\omega \tau'ux^\omega \tau'u  x^\omega \sigma'ux^\omega )^{\omega-1} \tau'\\
      &=  ( \tau^2   \sigma )^{\omega-1}\tau.
   \end{alignat*}
  The proof of~\ref{agglutinationD} can be  made analogously.
   \end{proof}

 We proceed by showing that,  for any $\kb$-term $\alpha$, 
 it is possible to effectively compute a   $\kb$-term with rank at most 2 that is $\Sigma$-equivalent to $\alpha$.  Therefore, the
$\kb$-word problem for $\LG$ consists simply in determining whether two given $\kb$-terms of rank at most
2 have the same interpretation over \LG .

\begin{proposition}\label{prop:rank2}
Let $\alpha$ be an arbitrary \kbt . It is possible to effectively compute a $\kb$-term $\alpha'$ such that $\alpha'$ is
 $\Sigma$-equivalent to $\alpha$ and $\rk {\alpha'}\le 2$.
\end{proposition}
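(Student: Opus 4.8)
The plan is to argue by induction on the rank $r=\rk\alpha$. If $r\le 2$ we take $\alpha'=\alpha$, so suppose $r\ge 3$ and write $\alpha$ in its rank configuration $\alpha=\rho_0\pi_1^{\omega+q_1}\rho_1\cdots\pi_n^{\omega+q_n}\rho_n$ with $\rk{\rho_j}\le r-1$ and $\rk{\pi_\ell}=r-1$. Since each $\rho_j$ already has rank at most $r-1$, it will be enough to replace every limit term $\pi_\ell^{\omega+q_\ell}$ by a $\Sigma$-equivalent term of rank at most $r-1$: the resulting term is $\Sigma$-equivalent to $\alpha$, has rank at most $r-1$, and the induction hypothesis then produces the desired $\alpha'$. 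Thus the whole problem reduces to the following local statement: for a single term $\pi$ with $\rk\pi=r-1\ge 2$ and any $q\in\Z$, the limit term $\pi^{\omega+q}$ is $\Sigma$-equivalent to a term of rank at most $r-1$.

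First I would dispose of the non-negative exponents. For $q\ge 1$, identity \eqref{eq:pi_omega1_pi} gives $\pi^{\omega+1}=\pi$, so by \eqref{eq:id_k-terms4} one gets $\pi^{\omega+q}=\pi^{\omega+1}\pi^{q-1}=\pi^{q}$, a product of copies of $\pi$ and hence of rank $r-1$. For $q=0$ I would show that inside an outer $\omega$-power every top-level limit subterm of the base may be stripped of its exponent. The point is that, because each $\pi_\ell$ has rank $r-1\ge 1$, every top-level base $\tau$ occurring in $\pi$ also has rank $\ge 1$, so by \eqref{eq:pi_omega1_pi} it satisfies $\tau^{\omega+1}=\tau$ and $\tau^{\omega+s}$ lies in the same maximal subgroup as $\tau$, sharing the idempotent $\tau^\omega$ on both sides. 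Using \eqref{eq:id_k-terms6b} and \eqref{eq:xomega_fora-dentro}, the outer idempotent absorbs this difference, and one obtains the $\Sigma$-equivalence $\pi^\omega=(\pi^{(1)})^\omega$, where $\pi^{(1)}$ is the $1$-expansion of $\pi$ (all top-level exponents deleted). Since $\pi^{(1)}$ has rank $r-2$, the term $\pi^\omega$ has rank at most $r-1$, as required.

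The negative exponents are the main obstacle, since there the outer power is no longer idempotent and the absorption argument of the previous case is unavailable. By \eqref{eq:id_k-terms2} we may rewrite $\pi^{\omega-m}=(\pi^{m})^{\omega-1}$, so it suffices to reduce $\chi^{\omega-1}$ for an arbitrary base $\chi$ of rank $r-1\ge 2$. Writing $\chi=u\,z^{\omega}\,v$ with $z^{\omega}$ a top-level infinite power, the plan is to bring the inner idempotent $z^\omega$ to a border of the $(\omega-1)$-power by means of the shift \eqref{eq:id_k-terms5} and the inside--outside exponent migration \eqref{eq:inside-outside}, and then to absorb or merge the rank-$(r-1)$ material using the cancellation identities \eqref{eq:id_shortening}--\eqref{eq:id_agglutination} and Corollary~\ref{cor:id_shortening2}. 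Identity \eqref{eq:id_agglutination} is decisive here, as it fuses two $(\omega-1)$-powers sharing a common idempotent border into a single one, thereby preventing the length of the configuration from growing while its inner rank drops to at most $1$; the outcome is a term of rank at most $2\le r-1$.

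I expect the bulk of the work, and the only genuinely delicate part, to be this last case. Two complications have to be handled with care: the base $\chi$ may contain several top-level limit subterms, so the migration-and-agglutination step must be iterated and the bookkeeping of the accumulated exponents controlled; and one must verify at each step that the group-content carried by the exponents is routed correctly, which is precisely what Corollary~\ref{cor:id_shortening2} (governing the $\K$-, $\D$- and $\LI$-driven cancellations) is designed to guarantee. Everything else --- the induction on rank and the non-negative exponents --- is routine once \eqref{eq:pi_omega1_pi} and the stripping equivalence $\pi^\omega=(\pi^{(1)})^\omega$ are in place.
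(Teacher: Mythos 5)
Your skeleton is the same as the paper's: induct on the rank, and reduce each top-level limit term $\pi_\ell^{\omega+q_\ell}$ of a rank $r$ term to a $\Sigma$-equivalent term of rank $r-1$; your handling of positive exponents via $\pi^{\omega+q}=\pi^{q}$ is correct and even a little slicker than the paper, which first normalizes all top-level exponents to $\omega-1$ using \eqref{eq:id_k-terms2} and \eqref{eq:id_k-terms4}. But the case you yourself identify as the crux --- $\chi^{\omega-1}$ for $\chi$ of rank $r-1\ge 2$ --- is left as a plan, and the plan as stated does not contain the mechanism that actually lowers the rank. The agglutination identity \eqref{eq:id_agglutination} merges two $(\omega-1)$-powers into one but leaves the rank of the base unchanged: if the bases have rank $r-1$, so does the fused base, and the whole term still has rank $r$. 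Likewise \eqref{eq:inside-outside} only trades an inner exponent $\omega+q$ for $\omega$ against an adjacent outer exponent; it never removes the inner limit term. Your claim that after agglutination the ``inner rank drops to at most $1$'' and ``the outcome is a term of rank at most $2$'' is therefore unjustified (and if it were provable in one step it would make the induction on rank superfluous).

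The missing idea is the one the paper's proof is built on: write $\chi^{\omega-1}=\chi^{\omega-1}\chi\chi^{\omega-1}$ (a consequence of \eqref{eq:id_k-terms3} and \eqref{eq:id_k-terms4}), use shifts \eqref{eq:id_k-terms5} to realign the three factors so that each $(\omega-1)$-power applies to a conjugate of $\chi$ whose displayed inner limit terms sit at its ends, use \eqref{eq:inside-outside} (together with \eqref{eq:id_shortening} when $\chi$ has more than one top-level limit term) to push the inner exponents to $\omega+1$, and only then invoke $\sigma^{\omega+1}=\sigma$ from \eqref{eq:pi_omega1_pi} --- valid precisely because $\rk{\sigma}\ge 1$ --- to strip the inner powers, so that the bases of the $(\omega-1)$-powers drop to rank $r-2$. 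This also forces a second, inner induction on the number of top-level limit terms of $\chi$, since with more than one such term the computation leaves behind a new $(\omega-1)$-power of a base of smaller $1$-length that must itself be reduced recursively; your remark that the step ``must be iterated'' acknowledges this but supplies no decreasing parameter. Finally, your stripping claim $\pi^{\omega}=(\pi^{(1)})^{\omega}$ for $q=0$ is semantically true over $\LG$ when every top-level base has rank at least $1$, but ``the outer idempotent absorbs this difference'' is not a derivation from $\Sigma$; it is also unnecessary, since $\pi^{\omega}=\pi\pi^{\omega-1}$ reduces that case to the negative one.
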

\begin{proof} The proof is made by induction on the rank of $\alpha$. For $\rk {\alpha}\leq 2$, the result holds trivially. Let now $\rk {\alpha}=i+1$ with $i\geq 2$ and suppose, by induction hypothesis, that the proposition holds for $\kb$-terms of rank $i$. Let $\alpha=\rho_0\pi_1^{\omega+q_1}\rho_1\cdots \pi_n^{\omega+q_n}\rho_n$ be the rank configuration of $\alpha$. By means of expansions of rank $i+1$ of types~\eqref{eq:id_k-terms2} and~\eqref{eq:id_k-terms4}, if necessary, we may assume that each $q_j$ is equal to $-1$. For each $j\in\{1,\ldots,n\}$, we claim that the rank $i+1$ $\kb$-term $\pi_j^{\omega-1}$ may be reduced into a $\Sigma$-equivalent rank $i$ $\kb$-term $\pi'_j$. The proof of the claim is made by induction on the $i$-length $m$ of $\pi_j$. Suppose first that $m=1$. Then, the rank configuration of $\pi_j$ is of the form $\pi_j=w_0\sigma^{\omega+p}w_1$. Using~\eqref{eq:inside-outside} and~\eqref{eq:pi_omega1_pi} in particular, one deduces
$$\begin{array}{rl}
\pi_j^{\omega-1}\hspace*{-2mm}&=\pi_j^{\omega-1}\pi_j\pi_j^{\omega-1}\\
&=w_0(\sigma^{\omega+p}w_1w_0)^{\omega-1}\sigma^{\omega+p}(w_1w_0\sigma^{\omega+p})^{\omega-1}w_1\\
&=w_0(\sigma^{\omega+1}w_1w_0)^{\omega-1}\sigma^{\omega+2-p}(w_1w_0\sigma^{\omega+1})^{\omega-1}w_1\\
&=(w_0\sigma w_1)^{\omega-1}w_0\sigma^{\omega+2-p}w_1(w_0\sigma w_1)^{\omega-1}.
\end{array}$$
This last $\kb$-term has rank $i$ and, so, we take it to be $\pi'_j$. Suppose now that $m>1$ and, by the induction hypothesis, that the claim holds for $\kb$-terms of $i$-length $m-1$. Let $\pi_j=w_0\sigma_1^{\omega+p_1}w_1\sigma_2^{\omega+p_2}w_2$ with $\rk{\sigma_1}=\rk{\sigma_2}=i-1$ and $w_0$ and $w_2$ with rank at most $i-1$. In this case, we use~\eqref{eq:id_shortening},~\eqref{eq:inside-outside} and~\eqref{eq:pi_omega1_pi} to deduce
$$\begin{array}{rl}
\pi_j^{\omega-1}\hspace*{-2mm}&=\pi_j^{\omega-1}\pi_j\pi_j^{\omega-1}\\
&=w_0(\sigma_1^{\omega+p_1}w_1\sigma_2^{\omega+p_2}w_2w_0)^{\omega-1}\sigma_1^{\omega+p_1}w_1(\sigma_2^{\omega+p_2}w_2w_0\sigma_1^{\omega+p_1}w_1)^{\omega-1}\sigma_2^{\omega+p_2}w_2\\
&=w_0(\sigma_1^{\omega+p_1}w_1\sigma_2^{\omega+p_2}w_2w_0)^{\omega-1}\sigma_1^{\omega+p_1}(\sigma_2^{\omega+p_2}w_2w_0\sigma_1^{\omega+p_1})^{\omega-1}\sigma_2^{\omega+p_2}w_2\\
&=w_0(\sigma_1^{\omega+1}w_1\sigma_2^{\omega+p_2}w_2w_0)^{\omega-1}\sigma_1^{\omega+2-p_1}(\sigma_2^{\omega+1}w_2w_0\sigma_1^{\omega+1})^{\omega-1}\sigma_2^{\omega+1}w_2\\
&=(w_0\sigma_1w_1\sigma_2^{\omega+p_2}w_2)^{\omega-1}w_0\sigma_1^{\omega+2-p_1}\sigma_2w_2(w_0\sigma_1\sigma_2w_2)^{\omega-1}.
\end{array}$$
The $\kb$-terms $\delta_1=w_0\sigma_1w_1\sigma_2^{\omega+p_2}w_2$ and $\delta_2=w_0\sigma_1^{\omega+2-p_1}\sigma_2w_2(w_0\sigma_1\sigma_2w_2)^{\omega-1}$ are   rank $i$ and $\delta_1$ has $i$-length $m-1$. By induction hypothesis, $\delta_1^{\omega-1}$  is $\Sigma$-equivalent to some rank $i$ term $\delta'_1$. Therefore, $\pi_j^{\omega-1}$ is $\Sigma$-equivalent to the rank $i$ $\kb$-term $\pi'_j= \delta'_1\delta_2$, thus proving the claim.
It follows that the rank $i$  $\kb$-term $\alpha_1=\rho_0\pi'_1\rho_1\cdots \pi'_n\rho_n$ is $\Sigma$-equivalent to $\alpha$. To conclude the proof it suffices to use the induction hypothesis on $\alpha_1$.
\end{proof}
\section{Canonical forms for $\bar\kappa$-terms over $\LG$}\label{section:algorithm_normal_form_rank2}
In this section, we define canonical forms for $\bar\kappa$-terms over $\LG$ and show how to compute a canonical
form of any given $\bar\kappa$-term. The rank 0 and rank 1 canonical $\kb$-terms over \LG\ were already
introduced in~\cite{Costa&Nogueira&Teixeira:2015} as coinciding with the rank 0 and rank 1 canonical forms over
\Se, since \LG\ and $\Se$ satisfy the same $\kb$-identities of rank at most 1. So, by
Proposition~\ref{prop:rank2} above, in order to complete the definition of the canonical forms over $\LG$ it
remains to introduce the rank 2 canonical forms. Contrary to what happens with \Se , we do not introduce these
canonical forms by stating ahead the properties they must satisfy. We start by describing an algorithm of
reduction for rank 2 $\bar\kappa$-terms, consisting in the application of elementary changes determined by the
$\kb$-identities of $\Sigma$. A $\kb$-term $\alpha'$ emerging after the reduction process applied to a
$\bar\kappa$-term $\alpha$ is then called an \LG\ \emph{canonical form}. By
Theorem~\ref{theo:equal_canonical_forms} below it follows that the $\kb$-term $\alpha'$ is unique and so we call
it \emph{the \LG\ canonical form of $\alpha$}.

\subsection{Some preliminary explanations}
The method to compute the canonical form of any given $\kb$-term $\alpha$ of rank 2 over \LG\ will be formally
exposed in Section~\ref{subsection:rank2canonical_form_algorithm} below. To clarify that process it is useful to
identify previously some situations that can happen and to establish the changes that can be performed in those
circumstances.

Let us notice that we may begin by applying to $\alpha$ the first step of the $\Se$ canonical form reduction algorithm. If the term thus obtained  has rank 1, then the reduction algorithm for rank 1 $\kb$-terms defined  in~\cite{Costa&Nogueira&Teixeira:2015}  (which is the second step of the rank 1  algorithm for $\Se$)
  produces the canonical form of $\alpha$.
 So, throughout  this work we focus in the case  in which $\alpha$ is a rank 2 semi-canonical $\kb$-term. It is not helpful to considerer $\alpha$ a canonical form over $\Se$ since this condition would be immediately lost. On the contrary, the semi-canonical form status will be  preserved  during the process. Moreover, by means of expansions of rank 2 of types~\eqref{eq:id_k-terms2} and~\eqref{eq:id_k-terms4}, if necessary, we may also assume that each exponent of rank 2 limit terms is equal to $\omega-1$. Unlike our choice for the rank 1 canonical form, we will keep those exponents
  that way throughout all the process and, so, also in the rank 2 canonical form.  Hence, the rank configuration
  of $\alpha$ is of the form
\begin{equation}\label{kb-term_rank2_algo}
\alpha=\rho_0\pi_1^{\omega-1}\rho_1\cdots \pi_n^{\omega-1}\rho_n.
\end{equation}
From now on, unless otherwise stated, all rank 2 $\kb$-terms are of this form.

 A main objective of the algorithm is to decrease as much as possible the $2$-length of the $\kb$-term. There are two ways of doing that.

\begin{description}
\item[{\textbf{Eliminations.}}] The first way is to apply the $\kb$-identity~\eqref{eq:id_elimination}. If, for some $i\in\{1,\ldots,n\}$, the subterm $\rho_{i-1}\pi_i^{\omega-1}\rho_i$ of $\alpha$ can be transformed, using only rank 2 shifts, into a term of the form $w_0x^{\omega+p}  w_1(x^{\omega+q} w_1)^{\omega-1} x^{\omega+r}w_2$, then we replace the  subterm $\rho_{i-1}\pi_i^{\omega-1}\rho_i$ by $w_0x^{\omega+p-q+r}w_2$ and say that it was made a \emph{limit term elimination (on position $i$)} in $\alpha$. Notice that the $\kb$-term thus obtained is a semi-canonical form since its $\omega$-portions are $\omega$-portions of $\alpha$ and $\alpha$ is a semi-canonical $\kb$-term.

\item[{\textbf{Agglutinations.}}] The second one is to apply either identity~\eqref{eq:id_agglutination} or  Corollary~\ref{cor:id_shortening2} \ref{agglutinationK}--\ref{agglutinationD}.\linebreak Suppose first that $n> 1$ and that, for some $i\in\{1,\ldots,n-1\}$,
 the subterm $\delta_{i,i+1}=\rho_{i-1}\pi_{i}^{\omega-1}\rho_i\pi_{i+1}^{\omega-1}\rho_{i+1}$ of $\alpha$ can be transformed, using only rank 2 shifts, into a $\kb$-term
 of the form  $w_0(x^{\omega+p}w_1)^{\omega-1}x^{\omega+q}(w_2x^{\omega+r})^{\omega-1}w_3$. Then, by~\eqref{eq:id_agglutination}, the term
 $\delta_{i,i+1}$ is equivalent to $w_0(x^{\omega}w_2x^{\omega+p-q+r}w_1x^{\omega})^{\omega-1}w_3$. This term is not in semi-canonical form, but its easy to transformed it into the semi-canonical term   $\delta=w_0(x^\omega w_2x^{\omega+p-q+r}w_1)^{\omega-1}x^{\omega}w_3$.
  We identify two  other situations (symmetric to each other) in which is possible to decrease the $2$-length of  $\alpha$, this time by applying Corollary~\ref{cor:id_shortening2}.  Suppose first that $\delta_{i,i+1}$ is transformed, using rank 2 shifts, into a term
 $w_0(ux^{\omega+p}w_1)^{\omega-1}(ux^{\omega+q}w_2)^{\omega-1}w_3$ in which the rank 2 limit terms are adjacent and have the
 same initial $\omega$-portion $ux^{\omega}$.  Then, using Corollary~\ref{cor:id_shortening2}~\ref{agglutinationK},  we obtain  the term  $\delta=w_0(u x^{\omega+q}w_2ux^{\omega+q}w_2ux^{\omega+p}w_1)^{\omega-1}ux^{\omega+q}w_2w_3$. Suppose next that $\delta_{i,i+1}$ can be transformed by rank 2 shifts into a term of the form $w_0(w_1x^{\omega+p}u)^{\omega-1}(w_2x^{\omega+q}u)^{\omega-1}w_3$. In this situation, using Corollary~\ref{cor:id_shortening2}~\ref{agglutinationD}, one obtains   $\delta=w_0w_1x^{\omega+p}u(w_2x^{\omega+q}uw_1x^{\omega+p}uw_1x^{\omega+p}u)^{\omega-1}w_3$.

 So, in the three situations above, the term $\alpha'$
 obtained from $\alpha$ by replacing the subterm $\delta_{i,i+1}$ with $\delta$  is an equivalent semi-canonical form of $\alpha$ that has a 2-length one unity
 less.  In each of these cases, the process of transformation of $\alpha$ into $\alpha'$ is called a \emph{limit terms agglutination (on positions $i$ and $i+1$)} in $\alpha$.
\end{description}

 The second aim of our reduction algorithm is to ``shorten'' as much as possible the bases of rank 2 limit
terms. We want, in particular, to  decrease  as much as possible their $1$-length  and to transform all possible exponents into $\omega$. This objective of the algorithm will be attained using $\kb$-identity~\eqref{eq:id_shortening}.

 \smallskip
 \paragraph{\textbf{Shortenings.}} We identify three cases in which~\eqref{eq:id_shortening} can be used with an effective simplification of the $\kb$-term.
\begin{enumerate}[label=\arabic*)]
 \item The first case is  easy to treat. Assume that some subterm $\rho_{i-1}\pi_i^{\omega-1}$ of $\alpha$  is of the form $w\sigma(\tau\sigma)^{\omega-1}$ where $\sigma$ and $\tau$ are rank 1 terms  with the same value over the pseudovariety $\LI$.
     By Corollary~\ref{cor:id_shortening2}\ref{eq:id_shortening2}, we may replace  $\rho_{i-1}\pi_i^{\omega-1}$ by $w\tau^{\omega-1}$ in $\alpha$. Note that since $\rho_{i-1}\pi_i^{\omega-1}$ is a semi-canonical form,  $w\tau^{\omega-1}$ is a semi-canonical form too.  We call this transformation a \emph{limit term shortening of type 1 (on position $i$)} in $\alpha$.

 \item In the second
  case we begin by applying~\eqref{eq:id_shortening} (almost) directly but then it may be necessary to make some
 adjustments because the semi-canonical form might have been lost. Suppose that, for some $i\in\{1,\ldots,n\}$, the subterm $\rho_{i-1}\pi_i^{\omega-1}\rho_i$ of $\alpha$ can be transformed, using only rank 2 shifts, into a term of the form
  $w_0x^{\omega+p_1}  w_1(z^{\omega+q_1}w_2x^{\omega+p_2}w_1)^{\omega-1} z^{\omega+q_2}w_3$. Then, by~\eqref{eq:id_shortening},  $\rho_{i-1}\pi_i^{\omega-1}\rho_i$ is equivalent to $\delta=w_0x^{\omega+p}  (z^{\omega}w_2x^{\omega})^{\omega-1} z^{\omega+q}w_3$, with $p=p_1-p_2$ and $q=q_2-q_1$.

     The,  possibly new, crucial $\omega$-portion $\gamma=x^{\omega} z^{\omega}$ of $\delta$  may not be a canonical form and so
     $\delta$ may not be in semi-canonical form.   Notice that  either $x=z$ or      the canonical form of $x^\omega z^\omega$  is of the type $x^{\omega+k}u_{x,z} z^{\omega+\ell}$ with $u_{x,z}\in A^*$.

       Suppose that  $x=z$.  If $q= 0$ or $q\neq p=0$,  then   $\delta$ is respectively equivalent to the  semi-canonical term   $\delta'=w_0x^{\omega+p}(w_2x^{\omega})^{\omega-1} w_3$ and $\delta'=w_0(x^{\omega}w_2)^{\omega-1} x^{\omega+q}w_3$. Otherwise,
        let $u_{x,x}$ be the least letter of the alphabet $A$ distinct from the first letter of $x$. Notice that we may assume $A$ not a singular set since, otherwise, every $\kb$-term is equivalent to a rank 1 limit term with base the only letter of $A$, and the problem is trivial in that case.
        Then $x^{\omega+t_1}u_{x,x} x^{\omega+t_2}$ is a canonical form for any integers $t_1$ and $t_2$ and $\delta$ is equivalent to the semi-canonical term $\delta'=w_0x^{\omega+p} u_{x,x} (x^{\omega}w_2x^{\omega}u_{x,x})^{\omega-1} x^{\omega+q}w_3$ by~\eqref{eq:id_shortening}.  In all cases  we replace the  subterm $\rho_{i-1}\pi_i^{\omega-1}\rho_i$  by $\delta'$ in  $\alpha$.

        If $x\neq z$ and $x^{\omega+k}u_{x,z} z^{\omega+\ell}$ is the canonical form of $x^\omega z^\omega$, then $x^{\omega+t_1+k}u_{x,z} z^{\omega+t_2+\ell}$ is the canonical form of $x^{\omega+t_1} z^{\omega+t_2}$ for any integers $t_1$ and $t_2$. In this case,
         $\delta$ may be transformed into the semi-canonical term $\delta'=w_0x^{\omega+p}u_{x,z}(z^{\omega}w_2x^{\omega}u_{x,z})^{\omega-1}z^{\omega+q}w_3$  by~\eqref{eq:id_shortening}.

In resume,  the  subterm  $\rho_{i-1}\pi_i^{\omega-1}\rho_i$ of $\alpha$ may be replaced by the semi-canonical term:
  \begin{enumerate}
 \item $w_0x^{\omega+p}(w_2x^{\omega})^{\omega-1} w_3\ $ if $\gamma$ is not canonical, $x=z$ and $q=0$;\vspace*{-0.5mm}
 \item $w_0(x^{\omega}w_2)^{\omega-1} x^{\omega+q}w_3\ $ if $\gamma$ is not canonical, $x=z$ and $q\neq p=0$;\vspace*{-0.5mm}
 \item $w_0x^{\omega+p}u_{x,z}(z^{\omega}w_2x^{\omega}u_{x,z})^{\omega-1}z^{\omega+q}w_3\ $ otherwise.
 \end{enumerate}
  We finish this process by making a rank 2 shift right in $\delta'$ whenever possible. In all cases, we say that it was made a \emph{limit term shortening of type 2 (on position $i$)} in $\alpha$.

\item Finally, we consider the case in which some subterm $\rho_{i-1}\pi_i^{\omega-1}\rho_i$ of $\alpha$ is of the form \linebreak $\rho_{i-1}(w_1x^{\omega+q}w_2)^{\omega-1}w_1x^{\omega+q'}\rho'_i$ or  $\rho'_{i-1}x^{\omega+q'}w_1(w_2 x^{\omega+q}w_1)^{\omega-1}\rho_i$  with   $w_1\in A^*$.  In this case  $\rho_{i-1}\pi^{\omega-1}\rho_i$ is replaced by
 $\rho_{i-1}(w_1x^{\omega}w_2)^{\omega-1}w_1x^{\omega+q'-q}\rho'_i$  and $\rho'_{i-1}x^{\omega+q'-q}w_1(w_2 x^{\omega}w_1)^{\omega-1}\rho_i$, respectively, by application of~\eqref{eq:inside-outside}.  This transformation will be called a \emph{limit term shortening of type 3 (on position $i$)} in $\alpha$.
 \end{enumerate}

Notice that in  the three cases above $\rho_{i-1}\pi_i^{\omega-1}\rho_i$ is replaced by a $\Sigma$-equivalent semi-canonical term, whence with the same initial and final $\omega$-portions, and with the same 2-length (which is equal to one).
\subsection{The rank 2 canonical form reduction algorithm}\label{subsection:rank2canonical_form_algorithm}
We now describe the procedure to compute the \LG\ canonical form of any given $\kb$-term $\alpha$ of rank 2. As explained in the previous subsection, $\alpha$ can be taken in semi-canonical form and with rank configuration of the form
$$\alpha=\rho_0\pi_1^{\omega-1}\rho_1\cdots \pi_n^{\omega-1}\rho_n.$$

  The algorithm begins by applying all possible limit term eliminations and limit terms agglutinations. Next, one should apply limit term shortenings. However for some $\kb$-terms there may be a conflict in the application (or not) of shortenings in consecutive positions $i$ and $i+1$, giving rise to different canonical forms. When possible, in order to obtain uniqueness, we choose to apply always a shortening  in the smaller  position $i$ (see the $\kb$-term $\gamma$ in Example~\ref{example:rank2_canonical_form} below). For that, we make rank 2 shifts right at each position $i$, eventually preceded by rank 2 left expansions of type~\eqref{eq:id_k-terms4} at position $i+1$ when useful (this is the case only when $\rho_iu'$ is a prefix of $\pi_i^p$, for some $p\in\mathbb N$,  and $u'\in A^+$ is a prefix of $\pi_{i+1}$, since we have already made all possible 2-length reductions). We call this a rank 2 \emph{extended shift right}.

The steps of the algorithm are the following.
\begin{enumerate}[label=(\arabic*)]
 \vspace{-1mm}\item Apply all possible limit term eliminations and limit terms agglutinations. In case the term $\alpha_1$ thus
  obtained is rank 1, declare $\alpha'$ to be $\alpha_1$ and stop (since $\alpha_1$ is already in rank 1 canonical form).
  \vspace{-1mm}\item Apply all possible rank 2 extended shifts right.
 \vspace{-1mm}\item Apply all possible limit term shortenings.

\end{enumerate}

Notice that, if a $\kb$-term  produced by the above algorithm is rank 1, then it is the term $\alpha_1$
obtained after step (1) since steps  (2)  and (3) do not change the $2$-length of the $\kb$-term. On the
other hand, after the application of steps (2) or (3) there is no need to restart the algorithm by returning to step (1). Indeed,
if after a limit term shortening it would be possible to make a limit term elimination or a limit terms agglutination, then those operations would previously be possible in step (1) since any transformation  on step (3) on a subterm $\rho_{i-1}\pi_i^{\omega-1}\rho_i$ of a $\kb$-term $\alpha$  does not modify the initial and final $\omega$-portions of the subterm.  The same holds in step~(2), even if it is necessary to make an expansion on position $i+1$. In such case,  we want to identify a subterm  of the form  $\rho_{i-1}\pi_i^{\omega-1}\rho_iu'$ with the aim to make a shift and to replace it by a term of the form $\rho_{i-1}\rho'_{i}{\pi'_i}^{\omega-1}$, making sure that the first letter  of $\pi'_i$ is not the first letter of the subterm immediately after  ${\pi'_i}^{\omega-1}$. Note that $u'$ must be a finite word since, in case $u'$ would have rank 1, it would previously be possible  to apply in step (1) an agglutination on $\rho_{i-1}\pi_i^{\omega-1}\rho_i\pi_{i+1}^{\omega-1}$.

The above is clearly a terminating process, and a $\kb$-term $\alpha'$ it generates is called an $\LG$ \emph{canonical
form}. Since $\alpha'$ is unique (meaning that the process is confluent) by
Theorem~\ref{theo:equal_canonical_forms}, it is called \emph{the $\LG$ canonical form of $\alpha$}. It is easy to verify that the following conditions are equivalent for the $\kb$-term $\alpha$:  $\alpha$ is in $\LG$ canonical form;  the application of any of the above steps does not modify $\alpha$;   $\alpha'=\alpha$;  every subterm of $\alpha$ is in \LG\ canonical form.

\begin{example}\label{example:rank2_canonical_form}
Applying the above procedure to  rank $2$ semi-canonical forms  one gets  sequences of $\kb$-terms as in the  following examples.
\begin{enumerate}
  \item $\alpha=b(ab)^{\omega-5} cb (ab)^{\omega+2} c\Bigl(b(ab)^{\omega+2} c\Bigr)^{\omega-1}ac^{\omega-3}\Bigl(b^\omega  a^{\omega-1} c\Bigr)^{\omega-1}
b^\omega  a^{\omega+1} c \Bigl(b^{\omega-2} ac a^{\omega+4} c\Bigr)^{\omega-1}b^{\omega+1} $
\begin{alignat*}{4}
\alpha\rightarrow&\ b(ab)^{\omega-5} cac^{\omega-3}b^\omega\Bigl(  a^{\omega}cb^{\omega-2} ac
a^{\omega+2}cb^\omega\Bigr)^{\omega-1}a^{\omega}cb^{\omega+1} & & \quad\mbox{ (step (1))}  \\
\rightarrow&\ b(ab)^{\omega-5} cac^{\omega-3}b^\omega  a^{\omega}c\Bigl( b^{\omega-2} ac
a^{\omega+2}cb^\omega a^{\omega}c\Bigr)^{\omega-1} b^{\omega+1} & &\quad \mbox{ (step (2))}\\
\rightarrow &\ b(ab)^{\omega-5} cac^{\omega-3}\Bigl( b^{\omega} ac
a^{\omega+2}c\Bigr)^{\omega-1}b^{\omega+3}  & &\quad \mbox{ (step (3))}
\end{alignat*}
It follows that $\alpha'=b(ab)^{\omega-5} cac^{\omega-3}\Bigl( b^{\omega} ac
a^{\omega+2}c\Bigr)^{\omega-1}b^{\omega+3}$ is the \LG\ canonical form of $\alpha$.
\item   $\gamma= d^\omega b \Bigl(a d^{\omega-1}cd^{\omega+3}bad^\omega b\Bigr)^{\omega-1} \Bigl(ab (cd)^{\omega-2}a\Bigr)^{\omega-1}$
\begin{alignat*}{4}
\gamma \rightarrow &\ d^\omega b  a \Bigl( d^{\omega-1}cd^{\omega+3}bad^\omega ba \Bigr)^{\omega-1} b (cd)^{\omega-2}a\Bigl (a b (cd)^{\omega-2}a a b (cd)^{\omega-2}a\Bigr)^{\omega-1} & &\quad \mbox{(step (2))}\\
\rightarrow  &\; \Bigl( d^{\omega-1}cd^{\omega+3}ba\Bigr)^{\omega-1} b (cd)^{\omega}a\Bigl (a b (cd)^{\omega-2}a a b (cd)^{\omega}a\Bigr)^{\omega-1} & & \quad\mbox{(step (3))}
 \end{alignat*}
So, $\gamma'=\Bigl( d^{\omega-1}cd^{\omega+3}ba\Bigr)^{\omega-1} b (cd)^{\omega}a\Bigl (a b (cd)^{\omega-2}a a b (cd)^{\omega}a\Bigr)^{\omega-1}$ is the \LG\ canonical form of $\gamma$.
\end{enumerate}
\end{example}

We observe that, in the above example, $\alpha'$ is also a canonical form over \Se, but  $\gamma'$  is not.
 We could use, alternatively, for the \LG\ canonical form of any $\kb$-term $\alpha$ the
 canonical form $\alpha''$ of $\alpha'$ over \Se.  Our option is however to work with $\alpha'$ since
  it is somewhat simpler to prove its uniqueness in this form. In the rest of the article, when referring to a
  canonical form we will be referring to an \LG\ canonical form.

\section{Characterizing rank 2 $\kb$-terms with finite words}\label{section:characterizing_kterms}
In~\cite{Costa&Nogueira&Teixeira:2015}, the authors have shown that, for rank 1 canonical $\kb$-terms $\pi$ and $\rho$, the $\kb$-identity $\pi=\rho$ holds over $\LG$  only when $\pi$ and $\rho$ are the same $\kb$-term. This
 is done by associating to the pair $(\pi,\rho)$ a pair  $({\mathsf w}_\pi,{\mathsf w}_\rho)$, of finite words over a new
 alphabet ${\mathsf V}$, such that  $\pi=\rho$ if and only if  ${\mathsf w}_\pi={\mathsf w}_\rho$. Afterwards, a finite
 local group $S_{\pi,\rho}$ is   associated to the pair  $({\mathsf w}_\pi,{\mathsf w}_\rho)$ and it is used as a
 test-semigroup to verify whether the $\kappa$-identity $\pi=\rho$ holds over $\LG$.

In this section, we extend the above construction to rank 2 $\kb$-terms $\alpha$ in semi-canonical form. This
will be accomplished by associating to $\alpha$ and any large enough positive integer $\mathbbold{q}$ a finite
word over ${\mathsf V}\cup {\mathsf V}^{-1}$. It is denoted by ${\mathsf w}_\mathbbold{q}(\alpha)$ and is called
the \emph{$\mathbbold{q}$-outline of $\alpha$}. Its reduced form in the free group $\FV$ will be denoted by
$\widetilde{\mathsf w}_\mathbbold{q}(\alpha)$ and named the \emph{$\mathbbold{q}$-root of $\alpha$}. The
fundamental property is that, if $\alpha$ and $\beta$ are $\kb$-terms in rank 1 canonical form or in rank 2 semi-canonical form, then
$\LG\models \alpha=\beta$ if and only if $\widetilde{\mathsf w}_\mathbbold{q}(\alpha)=\widetilde{\mathsf
w}_\mathbbold{q}(\beta)$.

\subsection{Outlines and roots of $\kb$-terms}\label{subsection:Outlines_roots_kterms}
For any $\kb$-term $\alpha$, let $$\mathbbold{q}_\alpha=1+\mbox{max}\{|q|:\mbox{$\omega+q$ occurs in $\alpha$}\}$$
and fix a positive integer $\mathbbold{q}$ such that $\mathbbold{q}\geq  \mathbbold{q}_\alpha$. For every symbol
$t_{*}$ representing an integer, we will often use the notation $\mathbbold t_{*}$ to represent the integer
$\mathbbold{q}+t_{*}$.

We begin by recalling  the definition of an outline of a rank 1 canonical $\kb$-term $\alpha$, introduced (without a name) in~\cite{Costa&Nogueira&Teixeira:2015}. We will make minor adjustments on that notion and on the notations. Let
$\alpha=u_{0}x_{1}^{\omega+q_{1}} u_{1}\cdots x_{n}^{\omega+q_{n}}u_{n}$
 be the rank configuration of $\alpha$ and  notice that $\alpha$ is $\Sigma_\Se$-equivalent to the term
 $$(u_{0}x_{1}^{\omega})x_{1}^{\omega+q_{1}}(x_{1}^{\omega} u_{1}x_{2}^{\omega})x_{2}^{\omega+ q_{2}}\cdots x_{n-1}^{\omega+q_{n-1}}(x_{n-1}^{\omega} u_{n-1}x_{n}^{\omega}) x_{n}^{\omega+ q_{n}}(x_{n}^{\omega}u_{n}).$$
If  the subterms $u_{0}x_{1}^{\omega}$, $x_{n}^{\omega}u_{n}$,  $x_{i}^{\omega} u_{i}x_{i+1}^{\omega}$ and $x_j$
are regarded as symbols (that we name \emph{variables},  to distinguish them from the letters of the alphabet
$A$) of a new alphabet ${\mathsf V}$ and each $\omega+q_j$ is replaced by the positive integer $\mathbbold q_j=\mathbbold{q}+q_{j}$, then
the above term is transformed into a finite word over the alphabet ${\mathsf
V}$. Notice that those subterms are precisely the initial $\omega$-portion, the final $\omega$-portion, the
crucial $\omega$-portions and the bases of limit terms of $\alpha$. They will be represented by
$\mathsf{i}_{u_{0},x_{1}}$, $\mathsf{t}_{x_{n},u_{n}}$,  $\mathsf{c}_{x_{i}, u_{i},x_{i+1}}$ and
$\mathsf{b}_{x_j}$ and called respectively an \emph{initial}, a \emph{final}, a \emph{crucial} and
a \emph{base} variable of the alphabet ${\mathsf V}$. So, we associate to $\alpha$ the following word
over  ${\mathsf V}$
 $${\mathsf w}_\mathbbold{q}(\alpha)=\mathsf{i}_{u_{0},x_{1}}\mathsf{b}_{x_{1}}^{\mathbbold q_{1}}\mathsf{c}_{x_{1}, u_{1},x_{2}}
 \mathsf{b}_{x_{2}}^{\mathbbold q_{2}}\cdots \mathsf{b}_{x_{n-1}}^{\mathbbold q_{n-1}}\mathsf{c}_{x_{n-1}, u_{n-1},x_{n}}\mathsf{b}_{ x_{n}}^{\mathbbold q_{n}}\mathsf{t}_{x_{n},u_{n}},$$
called the \emph{$\mathbbold{q}$-outline of $\alpha$}. We denote
$\bw_\mathbbold{q}(\alpha)=\mathsf{b}_{x_{1}}^{\mathbbold q_{1}}\mathsf{c}_{x_{1}, u_{1},x_{2}}
 \mathsf{b}_{x_{2}}^{\mathbbold q_{2}}\cdots \mathsf{b}_{x_{n-1}}^{\mathbbold q_{n-1}}\mathsf{c}_{x_{n-1}, u_{n-1},x_{n}}\mathsf{b}_{ x_{n}}^{\mathbbold q_{n}}$, so that
 ${\mathsf w}_\mathbbold{q}(\alpha)=\mathsf{i}_{u_{0},x_{1}}\bw_\mathbbold{q}(\alpha)\mathsf{t}_{x_{n},u_{n}}$. We remark that the initial and final variables were not used
in~\cite{Costa&Nogueira&Teixeira:2015}. The initial and final $\omega$-portions of the $\kb$-term were taken
into account by the introduction of two other (not so standard) variables. These two approaches are perfectly
homologous but the (minor) changes introduced here seem to be more natural.

The $\mathbbold{q}$-outline ${\mathsf w}_\mathbbold{q}(\alpha)$, of any rank 2 $\kb$-term $\alpha$ in
semi-canonical form, can be obtained by the application of
the two following recursive steps.
\begin{enumerate}[label=\arabic*)]
\item Consider $\alpha=\pi^{\omega-1}$, with $\pi=u_{0}x_{1}^{\omega+q_{1}} u_{1}\cdots x_{n}^{\omega+q_{n}}u_{n}$. Notice that, for every positive integer $k$, the $k$-expansion $\alpha^{(k)}\, (=\pi^k)$ is a
 canonical form by hypothesis and that the initial and final $\omega$-portions, $u_{0}x_{1}^{\omega}$ and $x_{n}^{\omega}u_{n}$, of $\pi$ are the initial and final $\omega$-portions of $\alpha$  and of $\alpha^{(k)}$ and
$${\mathsf w}_\mathbbold{q}(\alpha^{(k)})=\mathsf{i}_{u_{0},x_{1}} ( \mathsf{b}_{x_{1}}^{\mathbbold q_{1}}  \mathsf{c}_{x_{1}, u_{1},x_{2}}  \cdots       \mathsf{b}_{x_{n}}^{\mathbbold q_{n}}\mathsf{c}_{x_{n}, u_{n}u_{0},x_{1}})^{k-1}   \mathsf{b}_{x_{1}}^{\mathbbold q_{1}}  \mathsf{c}_{x_{1}, u_{1},x_{2}}    \cdots      \mathsf{b}_{x_{n}}^{\mathbbold q_{n}}     \mathsf{t}_{x_{n},u_{n}}.$$
Furthermore, in the free group $\FV$,
$${\mathsf w}_\mathbbold{q}(\alpha^{(k)})=\mathsf{i}_{u_{0},x_{1}} ( \mathsf{b}_{x_{1}}^{\mathbbold q_{1}}  \mathsf{c}_{x_{1}, u_{1},x_{2}}  \mathsf{b}_{x_{2}}^{\mathbbold q_{2}}   \cdots       \mathsf{c}_{x_{n-1}, u_{n-1},x_{n}}\mathsf{b}_{x_{n}}^{\mathbbold q_{n}}\mathsf{c}_{x_{n}, u_{n}u_{0},x_{1}})^{k}  \mathsf{c}_{x_{n}, u_{n}u_0,x_{1}}^{-1}     \mathsf{t}_{x_{n},u_{n}}.$$
Each finite group $G$ verifies $g^\ell=1_G$ for some positive integer $\ell>2$. Therefore, over $G$,
\begin{alignat*}{2} {\mathsf w}_\mathbbold{q}(\alpha^{(\ell-1)})& = \mathsf{i}_{u_{0},x_{1}}(\mathsf{b}_{x_{1}}^{\mathbbold q_{1}}\mathsf{c}_{x_{1}, u_{1},x_{2}} \cdots \mathsf{b}_{ x_{n}}^{\mathbbold q_{n}}\mathsf{c}_{x_{n},u_{n}u_{0},x_{1}}
)^{\ell-1}\;\!\mathsf{c}_{x_{n}, u_{n}u_0,x_{1}}^{-1}\mathsf{t}_{x_{n},u_{n}}\\[1mm]
&=\mathsf{i}_{u_{0},x_{1}}(\mathsf{b}_{x_{1}}^{\mathbbold q_{1}}\mathsf{c}_{x_{1}, u_{1},x_{2}}
 \cdots \mathsf{b}_{ x_{n}}^{\mathbbold q_{n}}\mathsf{c}_{x_{n},u_{n}u_0,x_1}
)^{-1}\;\!\mathsf{c}_{x_{n}, u_{n}u_0,x_{1}}^{-1}\mathsf{t}_{x_{n},u_{n}}\\[1mm]
&=\mathsf{i}_{u_{0},x_{1}} \mathsf{c}_{x_{n}, u_{n}u_{0},x_{1}}^{-1} \mathsf{b}_{x_{n}}^{-\mathbbold q_{n}}
 \cdots \mathsf{c}_{x_{1}, u_{1},x_{2}}^{-1}\mathsf{b}_{x_{1}}^{-\mathbbold q_{1}}\mathsf{c}_{x_{n}, u_{n}u_{0},x_{1}}^{-1}\mathsf{t}_{x_{n},u_{n}}.
\end{alignat*}
 In this case, we define the
\emph{$\mathbbold{q}$-outline} of $\alpha$ as the following word over the alphabet $\mathsf{V}\cup
\mathsf{V}^{-1}$,
$${\mathsf w}_\mathbbold{q}(\alpha)=\mathsf{i}_{u_{0},x_{1}} \mathsf{c}_{x_{n}, u_{n}u_{0},x_{1}}^{-1} \mathsf{b}_{x_{n}}^{-\mathbbold q_{n}}
 \mathsf{c}_{x_{n-1}, u_{n-1},x_{n}}^{-1}\mathsf{b}_{x_{n-1}}^{-\mathbbold q_{n-1}}\cdots \mathsf{b}_{x_{2}}^{-\mathbbold q_{2}}\mathsf{c}_{x_{1}, u_{1},x_{2}}^{-1}
 \mathsf{b}_{x_{1}}^{-\mathbbold q_{1}}\mathsf{c}_{x_{n}, u_{n}u_{0},x_{1}}^{-1} \mathsf{t}_{x_{n},u_{n}}.$$
Denoting $\bw_\mathbbold{q}(\alpha)=\mathsf{c}_{x_{n}, u_{n}u_{0},x_{1}}^{-1} \mathsf{b}_{x_{n}}^{-\mathbbold q_{n}}
 \mathsf{c}_{x_{n-1}, u_{n-1},x_{n}}^{-1}\cdots \mathsf{b}_{x_{2}}^{-\mathbbold q_{2}}\mathsf{c}_{x_{1}, u_{1},x_{2}}^{-1}
 \mathsf{b}_{x_{1}}^{-\mathbbold q_{1}}\mathsf{c}_{x_{n}, u_{n}u_{0},x_{1}}^{-1}$,   ${\mathsf w}_\mathbbold{q}(\alpha)$  may be written  as ${\mathsf w}_\mathbbold{q}(\alpha)=\mathsf{i}_{u_{0},x_{1}} \bw_\mathbbold{q}(\alpha)\mathsf{t}_{x_{n},u_{n}}$ also in this case.

\item Suppose that $\alpha=\alpha_1\alpha_2$ and notice that, as observed in Section~\ref{subsection:Canonical_forms_for_S},
 each subterm $\alpha_j$ is a semi-canonical form (and it is a canonical form when $\rk{\alpha_j}\leq 1$). If $\alpha_j$ is rank 1 or rank 2, then we assume  ${\mathsf w}_\mathbbold{q}(\alpha_j)$
   already defined and of the form
  ${\mathsf w}_\mathbbold{q}(\alpha_j)=\mathsf{i}_{u_{j},x_{j}}\bw_\mathbbold{q}(\alpha_j)\mathsf{t}_{y_{j},v_{j}}$.

    If $\alpha_1$ is rank 0, then we let ${\mathsf w}_\mathbbold{q}(\alpha)$ be the word
    $\mathsf{i}_{\alpha_1u_{2},x_{2}}\bw_\mathbbold{q}(\alpha_2)\mathsf{t}_{y_{2},v_{2}}$. Symmetrically, if $\alpha_2$
    is rank 0, then we take ${\mathsf w}_\mathbbold{q}(\alpha)=\mathsf{i}_{u_{1},x_{1}}\bw_\mathbbold{q}(\alpha_1)
    \mathsf{t}_{y_{1},v_{1}\alpha_2}$. Finally, for $\rk{\alpha_j}\in\{1,2\}$,  let
    ${\mathsf w}_\mathbbold{q}(\alpha)=\mathsf{i}_{u_{1},x_{1}}\bw_\mathbbold{q}(\alpha_1)
    \mathsf{c}_{y_{1},v_{1}u_{2},x_{2}}\bw_\mathbbold{q}(\alpha_2)\mathsf{t}_{y_{2},v_{2}}$. In this case,
    the crucial variable $\mathsf{c}_{y_{1},v_{1}u_{2},x_{2}}$ will be also denoted by
    $\mathsf{c}(\alpha_1,\alpha_2)$, whence ${\mathsf w}_\mathbbold{q}(\alpha)=\mathsf{i}_{u_{1},x_{1}}\bw_\mathbbold{q}(\alpha_1)
    \mathsf{c}(\alpha_1,\alpha_2)\bw_\mathbbold{q}(\alpha_2)\mathsf{t}_{y_{2},v_{2}}$.
\end{enumerate}

Let $\alpha$ be any rank 1 or rank 2 $\kb$-term  as above and let $ux^{\omega}$ and $y^{\omega}v$ be,
respectively, the initial and the final $\omega$-portions of $\alpha$. The variables $\mathsf{i}_{u,x}$ and
$\mathsf{t}_{y,v}$ will be also denoted respectively by $\mathsf{i}(\alpha)$ and $\mathsf{t}(\alpha)$. Then, by
the above definition, it is clear that  ${\mathsf w}_\mathbbold{q}(\alpha)$  may be written as
 \begin{equation}\label{eq:outline_initial_final_decomposition}
{\mathsf w}_\mathbbold{q}(\alpha)=\mathsf{i}(\alpha) \bw_\mathbbold{q}(\alpha)\mathsf{t}(\alpha)
\end{equation}
 for some word $\bw_\mathbbold{q}(\alpha)$. Moreover each of $\mathsf{i}(\alpha)$ and $\mathsf{t}(\alpha)$ has exactly one occurrence in the word ${\mathsf w}_\mathbbold{q}(\alpha)$.
 Now, let $\widetilde{\mathsf
w}_\mathbbold{q}(\alpha)$ be the reduced form of ${\mathsf w}_\mathbbold{q}(\alpha)$ in the free group $\FV$
generated by ${\mathsf V}$. The word $\widetilde{\mathsf w}_\mathbbold{q}(\alpha)$ will be called the
$\mathbbold{q}$-\emph{root} of $\alpha$. By~\eqref{eq:outline_initial_final_decomposition},
\begin{equation}\label{eq:root_initial_final_decomposition}
\widetilde{\mathsf w}_\mathbbold{q}(\alpha)=\mathsf{i}(\alpha)
\widetilde\bw_\mathbbold{q}(\alpha)\mathsf{t}(\alpha)
\end{equation}
where $\widetilde\bw_\mathbbold{q}(\alpha)$ is the reduced form of $\bw_\mathbbold{q}(\alpha)$ in $\FV$. In
particular, when
 $\rk \alpha=1$, the outline ${\mathsf w}_\mathbbold{q}(\alpha)$ is a word of ${\mathsf V}^+$ and, so, $\widetilde{\mathsf
w}_\mathbbold{q}(\alpha)={\mathsf w}_\mathbbold{q}(\alpha)$.

\begin{example}\label{example:computation_outline_root}
Consider the rank $2$ semi-canonical form    $\alpha$
 of
Example~\ref{example:rank2_canonical_form}. Hence, the $\mathbbold{q}$-outline and the $\mathbbold{q}$-root
of $\alpha$ are the following
$$\begin{array}{rl}
 {\mathsf w}_\mathbbold{q}(\alpha)\hspace*{-2mm}& =\mathsf{i}_{b,ab}\mathsf{b}_{ab}^{\mathbbold{q}-5} \mathsf{c}_{ab,cb,ab}
 \mathsf{b}_{ab}^{\mathbbold{q}+2}\mathsf{c}_{ab, cb,ab}\mathsf{c}_{ab,cb,ab}^{-1}\mathsf{b}_{ab}^{-(\mathbbold{q}+2)}\mathsf{c}_{ab,cb,ab}^{-1}
 \mathsf{c}_{ab,ca,c}\mathsf{b}_{c}^{\mathbbold{q}-3}\mathsf{c}_{c,\epsilon,b}\mathsf{c}_{a,c,b}^{-1}\mathsf{b}_{a}^{-(\mathbbold{q}-1)}\\[1mm]
& \hspace*{4mm} \mathsf{c}_{b,\epsilon,a}^{-1}\mathsf{b}_{b}^{-\mathbbold{q}}\mathsf{c}_{a,c,b}^{-1}
\mathsf{c}_{a,c,b}\mathsf{b}_{b}^{\mathbbold{q}}\mathsf{c}_{b,\epsilon,a}\mathsf{b}_{a}^{\mathbbold{q}+1}\mathsf{c}_{a,c,b}\mathsf{c}_{a,c,b}^{-1}\mathsf{b}_{a}^{-(\mathbbold{q}+4)}\mathsf{c}_{b,ac,a}^{-1}
\mathsf{b}_{b}^{-(\mathbbold{q}-2)}\mathsf{c}_{a,c,b}^{-1} \mathsf{c}_{a,c,b}\mathsf{b}_{b}^{\mathbbold{q}+1}\mathsf{t}_{b,\epsilon}\\[2mm]

\widetilde{\mathsf w}_\mathbbold{q}(\alpha)\hspace*{-2mm}&
=\mathsf{i}_{b,ab}\mathsf{b}_{ab}^{\mathbbold{q}-5}\mathsf{c}_{ab,ca,c}\mathsf{b}_{c}^{\mathbbold{q}-3}\mathsf{c}_{c,\epsilon,b}\mathsf{c}_{a,c,b}^{-1}
\mathsf{b}_{a}^{-(\mathbbold{q}+2)}\mathsf{c}_{b,ac,a}^{-1}\mathsf{b}_{b}^{3}\mathsf{t}_{b,\epsilon}.
\end{array}$$
The \LG\ canonical form of $\alpha$ is $\alpha'=b(ab)^{\omega-5} cac^{\omega-3}\Bigl( b^{\omega} ac a^{\omega+2}c\Bigr)^{\omega-1}b^{\omega+3}$ and, so,
$$\begin{array}{rl}
 {\mathsf w}_\mathbbold{q}(\alpha')\hspace*{-2mm}& =\mathsf{i}_{b,ab}\mathsf{b}_{ab}^{\mathbbold{q}-5}\mathsf{c}_{ab,ca,c}
 \mathsf{b}_{c}^{\mathbbold{q}-3}\mathsf{c}_{c,\epsilon,b}\mathsf{c}_{a,c,b}^{-1}\mathsf{b}_{a}^{-(\mathbbold{q}+2)}\mathsf{c}_{b,ac,a}^{-1}
 \mathsf{b}_{b}^{-\mathbbold{q}}\mathsf{c}_{a,c,b}^{-1}\mathsf{c}_{a,c,b}\mathsf{b}_{b}^{\mathbbold{q}+3}\mathsf{t}_{b,\epsilon}\\[2mm]

\widetilde{\mathsf w}_\mathbbold{q}(\alpha')\hspace*{-2mm}& =\widetilde{\mathsf w}_\mathbbold{q}(\alpha).
\end{array}$$
\end{example}

Notice that an outline is a way to encode a term. Indeed,  it is obvious that for  terms $\alpha$ and $\beta$,
${\mathsf w}_\mathbbold{q}(\alpha)={\mathsf w}_\mathbbold{q}(\beta)$   for all $\mathbbold{q}\geq
\mbox{max}\{\mathbbold{q}_{\alpha},\mathbbold{q}_{\beta}\}$    is equivalent to ${\mathsf
w}_\mathbbold{q}(\alpha)={\mathsf w}_\mathbbold{q}(\beta)$ for some
$\mathbbold{q}\geq\mbox{max}\{\mathbbold{q}_{\alpha},\mathbbold{q}_{\beta}\}$. This condition   implies that,
either $\alpha$ is $\beta$, or $\alpha$ can be obtained from $\beta$ by applying a finite number of rank 2
shifts of the form $x(wx)^{\omega-1}=(xw)^{\omega-1}x$ with $x\in A^+$.  Moreover, although the
$\mathbbold{q}$-outline and the $\mathbbold{q}$-root of a term depend on the given integer $\mathbbold{q}$,  the
truthfulness of an identity of the kind $\widetilde{\mathsf w}_\mathbbold{q}(\alpha)=\widetilde{\mathsf
w}_\mathbbold{q}(\beta)$ is independent of the value chosen for $\mathbbold q$  provided  that
$\mathbbold{q}\geq\mbox{max}\{\mathbbold{q}_{\alpha},\mathbbold{q}_{\beta}\}$ since, as illustrated  in
Example~\ref{example:computation_outline_root}, the cancelations performed in the reduction process to compute
the $\mathbbold{q}$-root of a term do not depend on a specific value of $\mathbbold q$.  When $\alpha$ and
$\beta$ are canonical forms, we can be more precise. In this case, $\alpha$ and $\beta$ are both irreducible for
rank 2 extended shifts right and, so, $\alpha=\beta$ if and only if  ${\mathsf w}_\mathbbold{q}(\alpha)={\mathsf
w}_\mathbbold{q}(\beta)$  for all/some
$\mathbbold{q}\geq\mbox{max}\{\mathbbold{q}_{\alpha},\mathbbold{q}_{\beta}\}$.
\subsection{A necessary condition for the identity of two $\kb$-terms over \LG}\label{subsection:nec_condition_for_kterms_over_LG}
In this section we show that a necessary condition for the equality of two $\kb$-terms over \LG\ is the equality
of their roots.  The proof that this  is also sufficient is left to the next section.
\begin{proposition}\label{prop:nec_condit_eq_kterms}
Let $\alpha$ and $\beta$ be  \kbt s  in rank $1$ canonical form or in rank $2$ semi-canonical form and let $\mathbbold{q}\geq {\rm max}\{\mathbbold{q}_{\alpha},\mathbbold{q}_{\beta}\}$.
 If  $\LG\models\alpha=\beta$, then $\widetilde{\mathsf w}_\mathbbold{q}(\alpha)=\widetilde{\mathsf w}_\mathbbold{q}(\beta)$.
\end{proposition}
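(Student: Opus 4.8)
The plan is to read off the reduced word $\widetilde{\mathsf w}_\mathbbold{q}(\alpha)$ from the value of $\alpha$ in a suitably chosen finite local group, and then to let that group vary. I would use the test semigroups $\mathcal{S}(G,L,\mathsf f)$ from Subsection~\ref{subsection:local_groups}: fix a finite group $G$, a map $\mathsf f$, and a factorial language $L$ containing every factor, up to a length exceeding all those occurring in $\alpha$ and $\beta$, of the bases and of the initial, final and crucial $\omega$-portions of $\alpha$ and $\beta$. Since $\alpha$ and $\beta$ have rank at least $1$, both evaluate into the minimal ideal $\mathcal{M}[G;L^1,L^1;P]$, so each value is a triple: its outer $L^1$-coordinates record the initial and final $\omega$-portions (equivalently the variables $\mathsf i(\alpha),\mathsf t(\alpha)$), and its central coordinate lies in $G$. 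The key formula I would prove is that this central coordinate equals $\Phi(\mathsf w_\mathbbold{q}(\alpha))$, where $\Phi\colon\FV\to G$ is the homomorphism sending each variable of $\mathsf V$ to the $\hat{\mathsf f}$-contribution of the word it represents. As $\Phi$ is a homomorphism and $\widetilde{\mathsf w}_\mathbbold{q}(\alpha)$ is the reduced form of $\mathsf w_\mathbbold{q}(\alpha)$ in $\FV$, this coordinate is also $\Phi(\widetilde{\mathsf w}_\mathbbold{q}(\alpha))$.

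Granting this evaluation formula, the conclusion follows cleanly. If $\LG\models\alpha=\beta$, then $\mathcal{S}(G,L,\mathsf f)\models\alpha=\beta$ for every admissible choice, so the two values agree coordinatewise; the outer coordinates force the same initial and final $\omega$-portions (which is also what $\LG\supseteq\LI$ predicts), and the central coordinates give $\Phi(\widetilde{\mathsf w}_\mathbbold{q}(\alpha))=\Phi(\widetilde{\mathsf w}_\mathbbold{q}(\beta))$. Because $\mathsf f$ is a free parameter, every homomorphism from $\FV$ into a finite group arises as such a $\Phi$; and since free groups are residually finite, two distinct reduced words of $\FV$ are separated by some finite quotient. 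Hence the simultaneous agreement under all such $\Phi$ forces $\widetilde{\mathsf w}_\mathbbold{q}(\alpha)=\widetilde{\mathsf w}_\mathbbold{q}(\beta)$, as required.

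To establish the evaluation formula I would induct on the two recursive steps defining $\mathsf w_\mathbbold{q}$. The base case is $\alpha$ of rank $1$: here the central coordinate is the ordered product of the $\hat{\mathsf f}$-contributions of the successive variables, with the base variables carrying the exponents $\mathbbold{q}_j=\mathbbold{q}+q_j$, which is exactly the computation already carried out for rank $1$ in~\cite{Costa&Nogueira&Teixeira:2015}. The genuinely new point is a rank $2$ limit term $\alpha=\pi^{\omega-1}$: its value is the group inverse, inside the maximal subgroup at the idempotent $\pi^{\omega}$, of the contribution of $\pi$, and this inversion is precisely what sends the outline of $\pi^{k}$ to the reversed, negatively exponented word produced in step~1) of the construction. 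I would verify it by comparing, over a group $G$ of exponent $\ell$, the value of $\alpha=\pi^{\omega-1}$ with that of its expansion $\alpha^{(\ell-1)}=\pi^{\ell-1}$, and matching against the reduction of $\mathsf w_\mathbbold{q}(\alpha^{(\ell-1)})$ in $\FV$ displayed there. The product case $\alpha=\alpha_1\alpha_2$ is then obtained by concatenation, the shared junction contributing the crucial variable $\mathsf c(\alpha_1,\alpha_2)$.

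I expect the main obstacle to lie in the realizability and independence hidden behind ``$\mathsf f$ is a free parameter'': one must arrange $G$, $L$ and $\mathsf f$ so that the finitely many variables actually occurring in $\mathsf w_\mathbbold{q}(\alpha)$ and $\mathsf w_\mathbbold{q}(\beta)$ are sent by $\Phi$ to prescribed, and in particular independent, elements of $G$, even though a base variable $\mathsf b_x$ and a crucial variable $\mathsf c_{x,u,y}$ both draw on $\hat{\mathsf f}$-data attached to $x$. This is where I would use the standing assumption that $A$ is not a singleton together with the freedom to enlarge $L$. Getting the group coordinate to be exactly $\Phi(\mathsf w_\mathbbold{q}(\alpha))$ — with the inverse bookkeeping for the $(\omega-1)$-powers matching the free-group reduction on the nose — is the technical heart; once it is in place, residual finiteness of $\FV$ finishes the proof.
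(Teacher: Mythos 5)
Your proposal is correct and follows essentially the same route as the paper: both rest on the test local groups $\mathcal{S}(G,L,\mathsf f)$ of~\cite{Costa&Nogueira&Teixeira:2015}, the identification of the middle (group) coordinate of the evaluated term with the image of $\bw_\mathbbold{q}(\alpha)$ under a homomorphism $\FV\to G$, the comparison of $\pi^{\omega-1}$ with $\pi^{\ell-1}$ over a group of exponent $\ell$, and residual finiteness of the free group to separate distinct reduced words. The only difference is organizational — the paper argues contrapositively from a single separating homomorphism $\eta$ rather than quantifying over all $\Phi$ — and the realizability issue you flag is exactly what the companion paper's Theorem 5.1 construction settles.
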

\begin{proof}
Assume that $\LG\models\alpha=\beta$. Then $\LI\models\alpha=\beta$, which means, by~\eqref{eq:root_initial_final_decomposition}, that the $\mathbbold{q}$-roots $\widetilde{\mathsf w}_\mathbbold{q}(\alpha)$ and $\widetilde{\mathsf w}_\mathbbold{q}(\beta)$ of $\alpha$ and $\beta$ have
the same initial and final variables, say $\mathsf{i}_{u,x}$ and $\mathsf{t}_{y,v}$ respectively. Suppose, by way of contradiction, that
$\widetilde{\mathsf w}_\mathbbold{q}(\alpha)\neq\widetilde{\mathsf w}_\mathbbold{q}(\beta)$. The case in which
$\alpha$ and $\beta$ are both rank 1 $\kb$-terms was already treated in~\cite[Theorem
5.1]{Costa&Nogueira&Teixeira:2015}. So, we assume without loss of generality that $\rk \alpha=2$ and use those results in rank 1 to manage the new situation, avoiding the difficult technicalities  of the proof (see Section~\ref{subsection:local_groups}
and~\cite{Costa&Nogueira&Teixeira:2015} for more details and missing definitions). We begin by using the same method  to build a finite local group $S_{\alpha,\beta}$ of the form
$S_{\alpha,\beta}=\mathcal{S}(G,L,\mathsf f)$ as follows.

 As $\widetilde{\mathsf w}_\mathbbold{q}(\alpha)\neq\widetilde{\mathsf w}_\mathbbold{q}(\beta)$ by our assumptions, there exists a
finite group $G$  (whose orders of its elements can be chosen arbitrarily large) that fails the identity ${\mathsf w}_\mathbbold{q}(\alpha)={\mathsf w}_\mathbbold{q}(\beta)$.
Hence, there is a group homomorphism $\eta:\FV\rightarrow G$ such  that $\eta({\mathsf
w}_\mathbbold{q}(\alpha))\neq \eta({\mathsf w}_\mathbbold{q}(\beta))$. For each variable
$\mathsf{v}_{*}$ of ${\mathsf V}$ occurring in ${\mathsf w}_\mathbbold{q}(\alpha)$ or ${\mathsf
w}_\mathbbold{q}(\beta)$, denote $\eta(\mathsf{v}_{*})$ by $g_{*}$. Since $\eta$ is a group homomorphism,
 by~\eqref{eq:outline_initial_final_decomposition},
\begin{equation}\label{eq:g_alpha_decomp}
\eta({\mathsf w}_\mathbbold{q}(\alpha))=g_{u,x}\eta(\bw_\mathbbold{q}(\alpha))g_{y,v}\quad\mbox{ and }\quad
\eta({\mathsf w}_\mathbbold{q}(\beta))=g_{u,x}\eta(\bw_\mathbbold{q}(\beta))g_{y,v}.
\end{equation}
Next, let $L$ and ${\mathsf f}$ be the ones that would be chosen by the process of~\cite[Theorem 5.1]{Costa&Nogueira&Teixeira:2015} for the rank 1 canonical forms $\alpha_1$ and $\beta_1$ such that $\alpha_1=\alpha^{(2)}$ and $\beta_1=\beta^{(2)}$
 when $\rk \beta=2$ or $\beta_1=\beta$ when $\rk \beta=1$. This completes the definition of the semigroup $S_{\alpha,\beta}=\mathcal{S}(G,L,{\mathsf f})$.

 Since $S_{\alpha,\beta}$ is a finite semigroup, there is a positive integer $\ell>2$ such that  $s^\omega=s^\ell$ for every $s\in S_{\alpha,\beta}$.
 In particular, as $G$ is isomorphic to a subgroup of $S_{\alpha,\beta}$, $g^\ell=1_G$ for all $g\in G$. Let $\widehat\alpha=\alpha^{(\ell-1)}$
 and let $\widehat\beta= \beta^{(\ell-1)}$  in case $\rk\beta=2$ and
  $\widehat\beta=\beta$ otherwise.  Therefore, since $S_{\alpha,\beta}\in \LG$  and $\LG\models\alpha=\beta$, $S_{\alpha,\beta}$
 satisfies $\widehat\alpha=\alpha=\beta=\widehat\beta$. On the other hand, $\mathbbold{q}_{\widehat\alpha}=\mathbbold{q}_\alpha$ and
 $\mathbbold{q}_{\widehat\beta}=\mathbbold{q}_\beta$, so that
 $\mathbbold{q}\geq {\rm max}\{\mathbbold{q}_{\widehat\alpha},\mathbbold{q}_{\widehat\beta}\}$.
  By the choice of $\ell$ and since $\eta$ is a group homomorphism, one can verify easily from the definition of $\mathbbold{q}$-outline that  the equalities $\eta(\bw_\mathbbold{q}(\widehat\alpha))=\eta(\bw_\mathbbold{q}(\alpha))$ and $\eta(\bw_\mathbbold{q}(\widehat\beta))=\eta(\bw_\mathbbold{q}(\beta))$ hold.

Now, let $\phi: T_A^{\kb} \rightarrow S_{\alpha,\beta}$ be the homomorphism of $\kb$-semigroups
defined by $\phi(a)=a$ for $a\in A$. Since $\alpha_1$ and $\widehat\alpha$ (resp.\ $\beta_1$ and
$\widehat\beta$) have the same  portions and the parameters $L$ and $\mathsf{f}$ of the semigroup
$S_{\alpha,\beta}=\mathcal{S}(G,L,\mathsf f)$ depend only on those portions and on the homomorphism $\eta$, one
can verify by the proof of~\cite[Theorem 5.1]{Costa&Nogueira&Teixeira:2015} that $\phi(\widehat\alpha)$ and
$\phi(\widehat\beta)$ are triples of the form $(\_\; ,h_0\eta(\bw_\mathbbold{q}(\widehat\alpha))h_1,\_)$ and
$(\_\; ,h_0\eta(\bw_\mathbbold{q}(\widehat\beta))h_1,\_)$ where $h_0$ is $g_{x}$  when $u\neq \epsilon$ and it is
$1_G$ otherwise, and $h_1$ is $g_{y}$  when $v\neq \epsilon$ and it is $1_G$ otherwise. Since $S_{\alpha,\beta}$
satisfies $\widehat\alpha=\widehat\beta$, it follows that
$\eta(\bw_\mathbbold{q}(\widehat\alpha))=\eta(\bw_\mathbbold{q}(\widehat\beta))$.
  As $\eta(\bw_\mathbbold{q}(\widehat\alpha))=\eta(\bw_\mathbbold{q}(\alpha))$ and $\eta(\bw_\mathbbold{q}(\widehat\beta))=\eta(\bw_\mathbbold{q}(\beta))$, it follows that $\eta(\bw_\mathbbold{q}(\alpha))=\eta(\bw_\mathbbold{q}(\beta))$, whence, by~\eqref{eq:g_alpha_decomp}, $\eta({\mathsf w}_\mathbbold{q}(\alpha))=\eta({\mathsf w}_\mathbbold{q}(\beta))$. However, we affirmed above that $\eta({\mathsf w}_\mathbbold{q}(\alpha))\neq \eta({\mathsf w}_\mathbbold{q}(\beta))$ as a consequence of assuming that $\widetilde{\mathsf w}_\mathbbold{q}(\alpha)\neq\widetilde{\mathsf w}_\mathbbold{q}(\beta)$. Hence, this condition does not hold, thus concluding the proof of the proposition.
\end{proof}

An immediate consequence of Proposition~\ref{prop:nec_condit_eq_kterms} is that, for any rank $2$ $\kb$-term
$\alpha$ in semi-canonical form,
 $\widetilde{\mathsf w}_\mathbbold{q}(\alpha)=\widetilde{\mathsf w}_\mathbbold{q}(\alpha')$,
 where $\alpha'$ is the canonical form of $\alpha$.

\subsection{Properties of the $\mathbbold{q}$-root of a $\kb$-term}\label{section:Properties of root}

In the remaining of the paper,  when a rank 2 semi-canonical $\kb$-term $\alpha$ is given, we will usually consider its rank configuration of the form
\begin{equation}\label{eq:alpha_rank2_nf}
\alpha=\alpha_0\alpha_1^{\omega-1}\alpha_2\cdots \alpha_{2m-1}^{\omega-1}\alpha_{2m}.
\end{equation}
Notice that the $\mathbbold{q}$-outline ${\mathsf w}_\mathbbold{q}(\alpha)$ may be
written as
$${\mathsf w}_\mathbbold{q}(\alpha)=\mathsf{w}_{\alpha,0}\mathsf{w}_{\alpha,1}\mathsf{w}_{\alpha,2}\cdots \mathsf{w}_{\alpha,2m-1}
\mathsf{w}_{\alpha,2m}$$
 where:  $\mathsf{w}_{\alpha,2i-1} =\bw_\mathbbold{q}(\alpha_{2i-1}^{\omega-1})$ is a word on $\mathsf{V}^{-1}$ for each odd index $2i-1\in\{1,3,\ldots,2m-1\}$;  $\mathsf{w}_{\alpha,2i'}$ is a non-empty word on $\mathsf{V}$ for each even index
$2i'\in\{0,2,\ldots,2m\}$.  We then call each $\mathsf{w}_{\alpha,2i-1}$ a \emph{negative block} and each $\mathsf{w}_{\alpha,2i'}$ a \emph{positive
block} of  ${\mathsf w}_\mathbbold{q}(\alpha)$. Observe that, in each $\mathsf{w}_{\alpha,j}$ ($j\in\{0,1,\ldots, 2m\}$), crucial variables alternate with powers of base
variables. More precisely, for an odd $j$ the alternation is of the form
 $\mathsf{c}^{-1}_{x,\underline{\ },\underline{\ }} \mathsf{b}_{x}^{-r}\mathsf{c}^{-1}_{\underline{\ },\underline{\ },x }$, and for an even $j$ it is of the form $\mathsf{c}_{\underline{\ },\underline{\ },x} \mathsf{b}_{x}^{r}\mathsf{c}_{x,\underline{\ },\underline{\ } }$, where $r$ is a positive integer.
Moreover, $\mathsf{w}_{\alpha,j}$ begins and ends with a crucial variable except for $j=0$, in
which case it begins with the initial variable $\mathsf{i}(\alpha)$, and for $j=2m$, in which case it ends with the final variable $\mathsf{t}(\alpha)$.

Although, for the calculation of the $\mathbbold{q}$-root $\widetilde{\mathsf w}_\mathbbold{q}(\alpha)$,
the occurrences of \emph{spurs} (i.e., products of the form $vv^{-1}$ or  $v^{-1}v$ with $v\in \mathsf V$) in ${\mathsf
w}_\mathbbold{q}(\alpha)$ may be canceled in any order, we will assume that each cancelation step consists in
deleting the leftmost occurrence of a spur. With this assumption, the process of cancelation of ${\mathsf
w}_\alpha$ transforms each block $\mathsf{w}_{\alpha,j}$ into a unique and well-determined (possibly empty)
word,  called the \emph{remainder} of ${\mathsf w}_{\alpha,j}$ and denoted $\mathsf{r}_{\alpha,j}$, so that
 $$\widetilde{\mathsf w}_\mathbbold{q}(\alpha)={\mathsf r}_{\alpha,0}{\mathsf r}_{\alpha,1}{\mathsf r}_{\alpha,2}\cdots {\mathsf r}_{\alpha,2m-1} {\mathsf
r}_{\alpha,2m}.$$ In particular, the reduction process can, possibly,  eliminate completely some of the negative
blocks of ${\mathsf w}_\mathbbold{q}(\alpha)$ or gather into a unique negative block of $\widetilde{\mathsf w}_\mathbbold{q}(\alpha)$ some factors occurring in distinct negative blocks of ${\mathsf w}_\mathbbold{q}(\alpha)$, in which case the intermediate positive blocks are completely deleted.

 For a finite word ${\mathsf w}$ over the alphabet $\mathsf{V}\cup \mathsf{V}^{-1}$, we define the \emph{crucial length} of ${\mathsf w}$ as  the number of  occurrences of crucial variables in ${\mathsf w}$, and  denote it by $\crl{\mathsf w}$.  For each $j\in\{0,1,\ldots ,2m\}$, we
denote by $\dd_{\alpha,j}$  the number of occurrences of crucial variables in ${\mathsf w}_{\alpha,j}$ that are
canceled in the computation of $\widetilde{\mathsf w}_\mathbbold{q}(\alpha)$, that is, $$\dd_{\alpha,j}=\crl
{\mathsf{w}_{\alpha,j}}-\crl {\mathsf{r}_{\alpha,j}}.$$
Note that $\crl{\mathsf w_{\alpha,j}}$ is  the $1$-length of $\alpha_j$ in case $j\in\{0,2m\}$ and it is equal to the 1-length  of $\alpha_j$ plus one otherwise. Since the cancelations in ${\mathsf w}_{\alpha,j}$ are
performed from the extremes, ${\mathsf w}_{\alpha,j}=\wlslash_{\alpha,j}{\mathsf
r}_{\alpha,j}\wrslash_{\alpha,j}$ where $\wlslash_{\alpha,j}$ (resp.\ $\wrslash_{\alpha,j}$) is the longest
prefix (resp.\ suffix) of ${\mathsf w}_{\alpha,j}$ that is canceled by variables occurring on its left side
(resp.\ right side). The following lateral versions of $\dd_{\alpha,j}$ will be convenient. We let
$$\dl_{\alpha,j}=\crl{\wlslash_{\alpha,j}},\quad \dr_{\alpha,j}=\crl {\wrslash_{\alpha,j}},$$
and notice that $\dd_{\alpha,j}=\dl_{\alpha,j}+\mathbbold{c}\hspace*{-1pt}\mbox{\textquoteleft\!}_{\alpha,j}$ and $\dl_{\alpha,j}=0$ (resp.\ $\dr_{\alpha,j}=0$) if and only if $\wlslash_{\alpha,j}=\epsilon$ (resp.\ $\wrslash_{\alpha,j}=\epsilon$) since each intermediate block begins and ends with a crucial variable.

The following lemma presents important properties of the $\mathbbold{q}$-root of a rank 2 $\kb$-term which is a
 canonical form.
\begin{lemma}\label{lemma:oc_crucial_variables}
Let $\alpha$ be a rank $2$ canonical form with rank configuration of the form~\eqref{eq:alpha_rank2_nf} and let
$j\in\{1,2,\ldots,2m-1\}$.
\begin{enumerate}
\item\label{item:oc_crucial_variables-a} If $j$ is odd, then $\dl_{\alpha,j}\le 2$ and $\mathbbold{c}\hspace*{-2.7pt}\mbox{\textquoteleft\hspace*{-0.1pt}}_{\alpha,j}\le 1$ with $\dd_{\alpha,j}\le 2$.
\item\label{item:oc_crucial_variables-b} $\crl{{\mathsf r}_{\alpha,j}}\neq 0$.
\end{enumerate}
 Note that, by~\ref{item:oc_crucial_variables-b},  ${\mathsf r}_{\alpha,j}$ is non-empty for all
$j\in\{1,2,\ldots,2m-1\}$. Therefore, the number  of negative blocks of $\widetilde{\mathsf w}_\mathbbold{q}(\alpha)$ is equal to
the $2$-length $m$ of $\alpha$. Moreover,  the cancelation of the prefix $\wlslash_{\alpha,j}$ (resp.\ the
suffix $\wrslash_{\alpha,j}$) of ${\mathsf w}_{\alpha,j}$ is caused only by the adjacent block ${\mathsf
w}_{\alpha,j-1}$ (resp.\ ${\mathsf w}_{\alpha,j+1}$). That is, informally speaking, each block has only a ``local influence''. This means that, for each $j\in\{1,2,\ldots,2m\}$, $\wrslash_{\alpha,j-1}$ and $\wlslash_{\alpha,j}$ are mutually inverse words in $\FV$ and, therefore, $\dr_{\alpha,j-1}=\dl_{\alpha,j}$.
\end{lemma}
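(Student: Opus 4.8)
The plan is to analyse the spur cancellations of ${\mathsf w}_\mathbbold{q}(\alpha)$ block by block. Since every positive block is a word on ${\mathsf V}$ and every negative block a word on ${\mathsf V}^{-1}$, no spur can lie inside a single block; all cancellation occurs at a frontier between a positive block and an adjacent negative block, and two variables cancel exactly when they carry the same subscripts. So for an odd $j$ I would study how the neighbours $\mathsf{w}_{\alpha,j-1}$ and $\mathsf{w}_{\alpha,j+1}$ erode the negative block $\mathsf{w}_{\alpha,j}=\bw_\mathbbold{q}(\alpha_j^{\omega-1})$, whose explicit shape (writing $\alpha_j=U_0X_1^{\omega+q_1}\cdots X_N^{\omega+q_N}U_N$) is the alternation $\mathsf{c}^{-1}\mathsf{b}^{-\mathbbold{q}_k}\mathsf{c}^{-1}\cdots$ beginning and ending with the same wrap-around variable $\mathsf{c}_{X_N,U_NU_0,X_1}^{-1}$. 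Each time the cancellation advances past one further crucial variable of $\mathsf{w}_{\alpha,j}$ it forces, first, that a whole base power $\mathsf{b}^{-\mathbbold{q}_k}$ be matched (an exact coincidence of bases and exponents) and then that the next crucial variable match (a coincidence of the corresponding $\omega$-portions). The engine of the proof is to translate every such coincidence back to the $\kb$-term level.

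For part \ref{item:oc_crucial_variables-a} I would carry out this translation at the left frontier. Cancelling the first crucial variable of $\mathsf{w}_{\alpha,j}$ only says that $\alpha_{j-1}$ and $\alpha_j$ share a final $\omega$-portion; cancelling a second one forces the base sequence and exponents of $\alpha_j$ to begin repeating those read off from $\alpha_{j-1}$, which is precisely the hypothesis of a limit term shortening of type 2 or 3 (or, when $\alpha_{j-1}$ is itself a limit term, of a limit terms agglutination). A third crucial cancellation on the left would iterate this repetition and, because the wrap-around variable occurs at both ends of the block, would exhibit a subterm matching the left-hand side of \eqref{eq:id_shortening} or \eqref{eq:id_elimination}; as $\alpha$ is a canonical form none of these reductions applies, giving $\dl_{\alpha,j}\le 2$. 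The bound $\dr_{\alpha,j}\le 1$ on the right is the asymmetric point: here I would invoke that a canonical form is irreducible under rank 2 extended shifts right (step (2) of the algorithm), which pins the first letter of the base of $\pi_j$ against the letter following $\pi_j^{\omega-1}$ and so prevents the right neighbour from matching past the first crucial variable; a second right cancellation would again expose a shortening of type 3. Finally $\dd_{\alpha,j}\le 2$ follows by excluding the simultaneous occurrence $\dl_{\alpha,j}=2$, $\dr_{\alpha,j}=1$, which over-determines the block and again triggers a reduction.

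For part \ref{item:oc_crucial_variables-b} and the concluding remarks I would argue as follows. A negative block has crucial length $\crl{\mathsf{w}_{\alpha,j}}=N+1\ge 2$, and by part \ref{item:oc_crucial_variables-a} at most $2$ of these crucial variables are canceled, so the only way the block could disappear entirely is $N=1$ with $\dl_{\alpha,j}=\dr_{\alpha,j}=1$. In that case the two frontier matches say exactly that the initial and final $\omega$-portions of $\alpha_j^{\omega-1}$ coincide with those of its neighbours, so $\alpha_{j-1}\pi_j^{\omega-1}\alpha_{j+1}$ has the form $w_0x^{\omega+p}w_1(x^{\omega+q}w_1)^{\omega-1}x^{\omega+r}w_2$ and a limit term elimination would apply, contradicting canonicity. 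Hence $\crl{\mathsf{r}_{\alpha,j}}\neq 0$ for every intermediate $j$, which is part \ref{item:oc_crucial_variables-b}. Since no intermediate block (in particular no positive block separating two negative ones) is completely deleted, a reduction front starting at one frontier can never traverse an entire block to reach a non-adjacent one; therefore the cancellation of $\wlslash_{\alpha,j}$ is caused solely by $\mathsf{w}_{\alpha,j-1}$ and that of $\wrslash_{\alpha,j}$ solely by $\mathsf{w}_{\alpha,j+1}$. Consequently $\wrslash_{\alpha,j-1}$ and $\wlslash_{\alpha,j}$ are mutually inverse in $\FV$, whence $\dr_{\alpha,j-1}=\dl_{\alpha,j}$, and the surviving remainders $\mathsf{r}_{\alpha,1},\dots,\mathsf{r}_{\alpha,2m-1}$ account for exactly $m$ negative blocks of $\widetilde{\mathsf w}_\mathbbold{q}(\alpha)$.

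The delicate part is the dictionary between ``the cancellation reaches the $k$-th crucial variable of a block'' and ``a specific canonical-form reduction is applicable''. Making this precise requires tracking the exponent-matching cases ($r<\mathbbold{q}_k$, $r=\mathbbold{q}_k$, $r>\mathbbold{q}_k$ for the base power exposed by the neighbour) and the nature of the neighbour ($\alpha_{j-1}$ of rank $0$, of rank $1$, or a limit term). The interdependence of parts \ref{item:oc_crucial_variables-a} and \ref{item:oc_crucial_variables-b} is resolved by an induction along the blocks: part \ref{item:oc_crucial_variables-b} for $\mathsf{w}_{\alpha,j-1}$ is what guarantees that the analysis of the left frontier of $\mathsf{w}_{\alpha,j}$ is genuinely local, while the bounds of part \ref{item:oc_crucial_variables-a} are what make part \ref{item:oc_crucial_variables-b} provable. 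I expect the single most technical step to be establishing the asymmetric right bound $\dr_{\alpha,j}\le 1$ through irreducibility under rank 2 extended shifts right, since it is here that the left/right asymmetry of the statement is genuinely produced by the algorithm rather than by the intrinsic shape of the blocks.
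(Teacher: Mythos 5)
Your overall strategy is the paper's: translate each frontier cancellation of crucial variables into the applicability of a specific reduction (elimination, agglutination, shortening of type 1--3, or extended shift right) and use the canonicity of $\alpha$ to forbid it, with parts \ref{item:oc_crucial_variables-a} and \ref{item:oc_crucial_variables-b} interleaved by an induction along the term (the paper formalizes this as induction on the $2$-length $m$, splitting $\alpha$ into a left part $\vec\alpha$ and a right part $\cev\alpha$). Your treatment of the negative blocks, including the asymmetric bound $\dr_{\alpha,j}\le 1$ via irreducibility under rank $2$ extended shifts right and the exclusion of a vanishing negative block via a limit term elimination, matches the paper's case analysis in substance.

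However, there is a genuine gap: part \ref{item:oc_crucial_variables-b} is asserted for \emph{every} $j\in\{1,\dots,2m-1\}$, hence also for even $j$, i.e.\ for the positive blocks separating two consecutive negative blocks, and your argument only covers odd $j$. You then \emph{use} the unproved even case as the premise for the locality conclusion (``since no intermediate block, in particular no positive block separating two negative ones, is completely deleted\,\dots''). This even case is not automatic: a positive block ${\mathsf w}_{\alpha,2i}$ has crucial length $\ell+1$ where $\ell$ is the $1$-length of $\alpha_{2i}$, which can be as small as $1$, while it is attacked from the left with $\dl_{\alpha,2i}=\dr_{\alpha,2i-1}\le 1$ and from the right with $\dr_{\alpha,2i}=\dl_{\alpha,2i+1}\le 2$, so up to $3$ crucial variables can be consumed. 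The paper closes this by an explicit analysis of the cases $\ell=0,1,2$, showing that in each one the subterm $\alpha_{2i-1}^{\omega-1}\alpha_{2i}\alpha_{2i+1}^{\omega-1}$ would have one of the shapes to which identity~\eqref{eq:id_agglutination} or Corollary~\ref{cor:id_shortening2}\ref{agglutinationK}--\ref{agglutinationD} applies, i.e.\ a limit terms agglutination would have been performed in Step~(1) of the reduction algorithm. Without this step the ``local influence'' claim, the identity $\dr_{\alpha,j-1}=\dl_{\alpha,j}$, and the count of negative blocks of $\widetilde{\mathsf w}_\mathbbold{q}(\alpha)$ are all unsupported, since a vanishing positive block would let two negative blocks merge and cancellation propagate across non-adjacent blocks.
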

\begin{proof}
  The proof is made by induction on $m$. Assume first that $m=1$ and so $j=1$, $\alpha=\alpha_0\alpha_1^{\omega-1}\alpha_2$ and
  ${\mathsf w}_\mathbbold{q}(\alpha)={\mathsf w}_{\alpha,0}{\mathsf w}_{\alpha,1}{\mathsf w}_{\alpha,2}$.
  Let $\alpha_1=u_{0}x_{1}^{\omega+q_{1}} u_{1}\cdots x_{n}^{\omega+q_{n}}u_{n}$ be the rank configuration
  of $\alpha_1$, whence
  $${\mathsf w}_{\alpha,1}=\mathsf{c}_{x_{n}, u_{n}u_{0},x_{1}}^{-1} \mathsf{b}_{x_{n}}^{-\mathbbold q_{n}}\mathsf{c}_{x_{n-1}, u_{n-1},x_{n}}^{-1}
   \cdots \mathsf{b}_{x_{2}}^{-\mathbbold q_{2}}\mathsf{c}_{x_{1}, u_{1},x_{2}}^{-1} \mathsf{b}_{x_{1}}^{-\mathbbold q_{1}}
   \mathsf{c}_{x_{n}, u_{n}u_{0},x_{1}}^{-1}.$$
Supposing that $\alpha_1$ is a generic rank 1 $\kb$-term with $n>1$ and $q_n=0$, we define the term
$x_{n-1}^\omega u_{n-1}x_{n}^{\omega}u_{n}$ to be the \emph{final $\omega 2$-portion of $\alpha_1$}. To prove
condition~\ref{item:oc_crucial_variables-a}, we consider two cases.

\begin{enumerate}[label=\textbf{Case \arabic*.},ref=\arabic*,labelwidth=-10ex,labelsep=*,leftmargin=3ex,itemindent=7ex,parsep=1ex]
\item $\alpha_2$ is a rank 1 $\kb$-term with initial $\omega$-portion $u_{0}x_{1}^{\omega}$. In this case, by Steps (2) and (3) of the canonical form reduction algorithm,   $u_0=\epsilon$, $q_1=0$ and
 $\alpha_2$ is of the form $\alpha_2=x_{1}^{\omega+p}\alpha'_2$ with $p\neq 0$ (since in case $p=0$ it would be possible to apply a rank 2 shift right on $\alpha$).  On the other hand,
  ${\mathsf w}_{\alpha,2}=\mathsf{c}(\alpha_1,\alpha_2)\bw_\mathbbold{q}(\alpha_2)\mathsf{t}(\alpha_2)$, whence ${\mathsf w}_{\alpha,2}$ is of the form ${\mathsf w}_{\alpha,2}=\mathsf{c}_{x_{n}, u_{n},x_{1}}\mathsf{b}_{x_{1}}^{\mathbbold p}{\mathsf w}'_{\alpha,2}$. Therefore $\wrslash_{\alpha,1}=\mathsf{b}_{x_{1}}^{-p'}\mathsf{c}_{x_{n}, u_{n},x_{1}}^{-1}$ (and $\wlslash_{\alpha,2}=\mathsf{c}_{x_{n}, u_{n},x_{1}}\mathsf{b}_{x_{1}}^{p'}$) where $p'$ is $\mathbbold q$ when $p>0$ and it is $\mathbbold q+p$ when $p<0$. It follows that
$\dr_{\alpha,1}= 1$. The following subcases may happen.

\begin{enumerate}[label=\textbf{Case 1.\arabic*.},ref=\arabic*,labelwidth=-10ex,labelsep=*,leftmargin=2ex,itemindent=12ex,parsep=1ex]
    \item $\alpha_0$ is a rank 1 $\kb$-term with final $\omega$-portion $x_{n}^{\omega}u_{n}$. By Step (3), one deduces
that $q_n=0$,  $\alpha_0=\alpha'_0 x_{n}^{\omega+r}u_{n}$ with $r\in\mathbb Z$ and $u_n=u_{x_n,x_1}$. Notice that, in this case, $n>1$. Indeed, if $n$ was 1, then the limit term $\alpha_1^{\omega-1}$ would
be eliminated in Step (1) of the canonical form reduction algorithm. If $r\neq 0$, then one derives
$\dl_{\alpha,1}= 1$ as above and concludes that $\dd_{\alpha,1}= 2$. Suppose now that $r=0$ and notice that
$x_{n-1}^{\omega}u_{n-1}x_{n}^{\omega}u_{n}$ can not be the final $\omega 2$-portion of $\alpha_0$. Indeed,
otherwise, by Step (3) of the canonical form reduction algorithm, it would be possible to shorten the rank 2 limit term.
     As a consequence,
 $\mathsf{c}_{x_{n-1}, u_{n-1},x_{n}} \mathsf{b}_{x_{n}}^{\mathbbold q}\mathsf{c}_{x_{n}, u_{n},x_{1}}$ is not a
 suffix of ${\mathsf w}_{\alpha,0}$ and, so, the equalities $\dl_{\alpha,1}= 1$ and $\dd_{\alpha,1}= 2$ also hold for $r=0$.

\item  $x_{n}^{\omega}u_{n}$ is not the final $\omega$-portion of $\alpha_0$. In this case, it is immediate that
$\mathsf{c}_{x_{n}, u_{n},x_{1}}$ is not the final variable of ${\mathsf w}_{\alpha,0}$. Therefore,
$\dl_{\alpha,1}= 0$ and $\dd_{\alpha,1}= 1$.
\end{enumerate}
\item $\alpha_2$ has not $u_{0}x_{1}^{\omega}$ as initial $\omega$-portion. Then,
we deduce readily that $\dr_{\alpha,1}= 0$ and,  as in Case 1., consider two subcases.
\begin{enumerate}[label=\textbf{Case 2.\arabic*.},ref=\arabic*,labelwidth=-10ex,labelsep=*,leftmargin=2ex,itemindent=12ex,parsep=1ex]
    \item  $\alpha_0$ is a rank 1 $\kb$-term with final $\omega$-portion $x_{n}^{\omega}u_{n}$. In this case, Step (3)
 determines also  $q_n=0$ and $\alpha_0=\alpha'_0x_{n}^{\omega+r}u_{n}$. If $r\neq 0$, then
$\dl_{\alpha,1}= 1$ and so $\dd_{\alpha,1}= 1$. Let now $r=0$. If $n=1$, then $\alpha=\alpha'_0x_1^\omega u_1 (u_0
x_{1}^{\omega}u_{1})^{\omega-1}\alpha_2$ and, as above, $x_{1}^{\omega}u_{1}u_{0}$  cannot be  the final $\omega$-portion of $\alpha'_0$  since otherwise $\alpha$ could be reduced to a rank 1 $\kb$-term in Step (1). So, $\dd_{\alpha,1}=\dl_{\alpha,1}=1$ in that case. Assume now $n>1$.
If $x_{n-1}^{\omega}u_{n-1}x_n^\omega u_n$
 is the final $\omega 2$-portion of
 $\alpha_0$, then $q_{n-1}=0$ and $\dl_{\alpha,1}\geq 2$.
 On the other hand, $x_{n-2}^{\omega}u_{n-2}x_{n-1}^{\omega}u_{n-1}$ ($x_{2}^{\omega}u_{2}x_{1}^{\omega}u_{1}$ in case $n=2$) cannot be the final $\omega2$-portion of $\alpha'_0$ since, otherwise, as in Case 1.1., it  would be  possible to apply a type 2 shortening. Whence $\dd_{\alpha,1}=\dl_{\alpha,1}=2$.

\item $x_{n}^{\omega}u_{n}$ is not the final $\omega$-portion of $\alpha_0$. In this case, $\dd_{\alpha,1}=\dl_{\alpha,1}= 0$.
\end{enumerate}
\end{enumerate}
The above analysis shows that, in all possible cases, $\dl_{\alpha,j}\le 2$ and $\dr_{\alpha,j}\le 1$ with
$\dd_{\alpha,j}\le 2$, thus proving~\ref{item:oc_crucial_variables-a} for $m=1$.

Condition~\ref{item:oc_crucial_variables-b} follows easily from~\ref{item:oc_crucial_variables-a}. Indeed,
  by~\ref{item:oc_crucial_variables-a},
$\crl{{\mathsf r}_{\alpha,1}}= 0$ if and only if $\crl{{\mathsf w}_{\alpha,1}}=\dd_{\alpha,1}=2$, in which case
$n=1$. This excludes Case 1.1.\ and Case 2.1.\ with $n>1$, the only
situations in which $\dd_{\alpha,1}=2$, and allows us to conclude that $\crl{{\mathsf r}_{\alpha,1}}>0$, thus
proving~\ref{item:oc_crucial_variables-b} for $m=1$.

Let now $m>1$ and suppose, by induction hypothesis, that the result holds for $\kb$-terms with 2-length at most
$m-1$.
 Let $\vec\alpha=\alpha_{0}\alpha_{1}^{\omega-1}\alpha_{2}\cdots \alpha_{2m-3}^{\omega-1}\alpha_{2m-2} ux^\omega$ and $\cev{\alpha}=y^\omega v\alpha_{2m-2} \alpha_{2m-1}^{\omega-1}\alpha_{2m}$, where $ux^\omega$ and $y^\omega v$ are, respectively, the initial $\omega$-portion of $\alpha_{2m-1}$ and the final $\omega$-portion of $\alpha_{2m-3}$. Hence
\begin{alignat*}{2}
{\mathsf w}_\mathbbold{q}({\alpha})&={\mathsf w}_{\alpha,0}{\mathsf w}_{\alpha,1}{\mathsf w}_{\alpha,2}{\mathsf
w}_{\alpha,3}\cdots {\mathsf w}_{\alpha,2m}\\[1mm]
 {\mathsf w}_\mathbbold{q}(\vec\alpha)&={\mathsf w}_{\vec\alpha,0}{\mathsf w}_{\vec\alpha,1} \cdots {\mathsf w}_{\vec\alpha,2m-2}  =
 {\mathsf w}_{\alpha,0}{\mathsf w}_{\alpha,1}\cdots {\mathsf w}_{\alpha,2m-2}\mathsf{b}_x^\mathbbold{q}\mathsf{t}_{x,\epsilon}\\[1mm]
{\mathsf w}_\mathbbold{q}(\cev\alpha)&={\mathsf w}_{\scriptsize\cev\alpha,0}{\mathsf
w}_{\scriptsize\cev\alpha,1}   {\mathsf w}_{\scriptsize\cev\alpha,2}=\mathsf{i}_{\epsilon,y}\mathsf{b}_y^\mathbbold{q}{\mathsf w}_{\alpha,2m-2}{\mathsf
w}_{\alpha,2m-1} {\mathsf w}_{\alpha,2m}.
\end{alignat*}
The $\kb$-term $\vec\alpha$ is clearly a canonical form, while $\cev\alpha$ may not be.   However, this only can happen  when  $v=v'v''$, with $v''\neq \epsilon$ and  $\sigma= v''\alpha_{2m-2}$  a rank 1 $\kb$-term for which there is a $\kb$-term $\tau$ such that  $\alpha_{2m-1}=\tau\sigma$ and $\LI\models \tau=\sigma$. Note that in such case  $\dr_{\alpha,2m-1}=0$ since $v''\in A^+$  is a prefix of $\alpha_{2m-1}$.  Furthermore,  $\cev\alpha$ is of the form $y^\omega v' \sigma (\tau\sigma)^{\omega-1}\alpha_{2m}$ and  its  canonical form is $\cev\alpha'= y^\omega v' \tau^{\omega-1}\alpha_{2m}$.   The respective $\mathbbold{q}$-outline ${\mathsf w}_\mathbbold{q}(\cev\alpha')$  is  such that
$${\mathsf w}_\mathbbold{q}(\cev\alpha')={\mathsf w}_{\scriptsize\cev\alpha',0}  {\mathsf w}_{\scriptsize\cev\alpha',1} {\mathsf w}_{\scriptsize\cev\alpha',2}= {\mathsf r}_{\scriptsize\cev\alpha',0}  {\mathsf r}_{\scriptsize\cev\alpha',1}  \wrslash_{\scriptsize\cev\alpha',1}  \wlslash_{\scriptsize\cev\alpha',2} {\mathsf r}_{\scriptsize\cev\alpha',2}, $$
and $\crl{{\mathsf r}_{\scriptsize\cev\alpha',0}}=\crl{{\mathsf w}_{\scriptsize\cev\alpha',0}}=1$.   By Proposition~\ref{prop:nec_condit_eq_kterms},  $\widetilde{{\mathsf w}}_\mathbbold{q}(\cev\alpha') =  \widetilde{{\mathsf w}}_\mathbbold{q}(\cev\alpha)$, and so
  ${\mathsf r}_{\scriptsize\cev\alpha',i}={\mathsf r}_{\scriptsize\cev\alpha,i}$ for $i=0,1,2$.

By the induction hypothesis, the statement  holds for both $\vec\alpha$  and $\cev\alpha'$, where $\cev\alpha'$
is taken to be $\cev\alpha$ in case $\cev\alpha$ is a canonical form. In particular, the occurrences of crucial
variables in  ${\mathsf w}_{\vec\alpha,2m-3}$ ($={\mathsf w}_{\alpha,2m-3}$  ) are not all canceled in the
simplification of ${\mathsf w}_\mathbbold{q}(\vec\alpha)$ in the free group $\FV$, and so  $\crl{ {\mathsf
r}_{\scriptsize\vec\alpha,2m-3}  }\ge 1$ . Analogously, there exist occurrences of crucial variables in
${\mathsf w}_{\scriptsize\cev\alpha',1}$    that are not canceled in the reduction of  ${\mathsf
w}_\mathbbold{q}(\cev\alpha')$, which implies that  $\crl{ {\mathsf r}_{\scriptsize\cev\alpha,1}  }\ge 1$ since
$\crl{ {\mathsf r}_{\scriptsize\cev\alpha,1}  }=\crl{{\mathsf r}_{\scriptsize\cev\alpha',1}  }$.
  Putting together these two facts, we deduce that  $\crl{{\mathsf r}_{\alpha,2m-3}}$ and $\crl{{\mathsf r}_{\alpha,2m-1}}$ are both positive, thus showing, in particular, that each  block has only a ``local influence'' in the reduction process.  Furthermore,  ${\mathsf r}_{\vec\alpha,2m-3}={\mathsf r}_{\alpha,2m-3}$, because we begin deleting the leftmost spurs, and $\dl_{\alpha,2m-1}\le \dl_{\scriptsize\cev\alpha,1}$.   Therefore, statement~\ref{item:oc_crucial_variables-a} follows immediately from the induction hypothesis applied to $\vec\alpha$  and $\cev\alpha$. To conclude the proof of statement~\ref{item:oc_crucial_variables-b}, and of the lemma, it remains to show that $\crl{{\mathsf r}_{\alpha,2m-2}}\neq 0$,    and so that $\dl_{\alpha,2m-1}=\dl_{\scriptsize\cev\alpha,1}$. We know already that the cancelations on ${\mathsf w}_{\alpha,2m-2}$ are determined only by the adjacent blocks ${\mathsf
w}_{\alpha,2m-3}$ and ${\mathsf w}_{\alpha,2m-1}$. So, it suffices to consider the subterm
$\alpha_{2m-3,2m-1}=\alpha_{2m-3}^{\omega-1}\alpha_{2m-2}\alpha_{2m-1}^{\omega-1}$ of $\alpha$ which, as one
recalls, is a canonical form.

To begin with, notice that $\crl{{\mathsf w}_{\alpha,2m-2}}=\ell+1$ where $\ell$ is the 1-length of $\alpha_{2m-2}$. On
the other hand, by~\ref{item:oc_crucial_variables-a}, $\dl_{\alpha,{2m-2}}=\dr_{\alpha,{2m-3}}\le 1$ and
$\dr_{\alpha,{2m-2}}=\dl_{\alpha,{2m-1}}\le 2$ so that $\dd_{\alpha,{2m-2}}\le 3$. Suppose by way of contradiction that
$\crl{\mathsf{r}_{\alpha,2m-2}}=0$ and, so, that $\ell\leq 2$. Let us analyse, for each of the three possible
values of $\ell$, what could hypothetically be the forms of $\alpha_{2m-3,2m-1}$ and verify that, actually, those
possibilities are not compatible with $\alpha_{2m-3,2m-1}$ being a canonical form.
\begin{enumerate}[label=\arabic*)]
\item $\ell=0$, that is, $\alpha_{2m-2}=w_0\in A^*$.  In this case $\crl{{\mathsf w}_{\alpha,{2m-2}}}=1$ and so, by hypothesis, $\dd_{\alpha,2m-2}=1$. Hence,
either $\dl_{\alpha,2m-2}=1$ and $\dr_{\alpha,2m-2}=0$, or $\dl_{\alpha,2m-2}=0$ and $\dr_{\alpha,2m-2}=1$. Then
$\alpha_{2m-3,2m-1}$ is of one of the forms $\alpha_{2m-3,2m-1}=( w_0ux^{\omega+p}\rho_1)^{\omega-1}w_0
(ux^{\omega+q}\rho_3)^{\omega-1}$ or $\alpha_{2m-3,2m-1}=(\rho_1 y^{\omega+p}v)^{\omega-1}w_0
(\rho_3 y^{\omega+q}vw_0)^{\omega-1}$.

\item $\ell=1$, say with $\alpha_{2m-2}=w_0z_1^{\omega+q_1}w_1$.  Then $\crl{{\mathsf w}_{\alpha,2m-2}}=\dd_{\alpha,2m-2}=2$ and either
$\dl_{\alpha,2m-2}=1$ and $\dr_{\alpha,2m-2}=1$, or $\dl_{\alpha,2m-2}=0$ and $\dr_{\alpha,2m-2}=2$. In this circumstance,
$\alpha_{2m-3,2m-1}$ is of one of the forms $\alpha_{2m-3,2m-1}=(z_1^{\omega}\rho_1)^{\omega-1}z_1^{\omega+q_1}w_1
(\rho_3 z_1^{\omega}w_1)^{\omega-1}$,  in which case $w_0$ must be empty, or $\alpha_{2m-3,2m-1}=(\rho_1y^{\omega+p}v)^{\omega-1}w_0z_1^{\omega}w_1
(\rho_3y^{\omega+r}vw_0z_1^{\omega}w_1)^{\omega-1}$, in which case $q_1=0$.

\item $\ell=2$, with $\alpha_{2m-2}=w_0z_1^{\omega+q_1}w_1z_2^{\omega+q_2}w_3$.  Hence $\crl{{\mathsf w}_{\alpha,2m-2}}=\dd_{\alpha,2m-2}=3$
with $\dl_{\alpha,2m-2}=1$ and $\dr_{\alpha,2m-2}=2$. In this case   $w_0=\epsilon$, $q_2=0$  and  $\alpha_{2m-3,2m-1}$ is of the form $\alpha_{2m-3,2m-1}=(z_1^{\omega}\rho_1)^{\omega-1}z_1^{\omega+q_1}w_1z_2^{\omega}w_2
(\rho_3z_1^{\omega}w_1z_2^{\omega}w_2)^{\omega-1}$.
\end{enumerate}
In all of the above situations it is possible to make a limit term agglutination on $\alpha_{2m-3,2m-1}$ and, so, this
$\kb$-term is not a canonical form by Step~(1) of the rank 2 reduction algorithm. Consequently,
$\crl{\mathsf{r}_{\alpha,2m-2}}>0$ and the proof is complete.
\end{proof}

 It is useful, for later reference, to state the following facts shown in the proof of Lemma~\ref{lemma:oc_crucial_variables}.
\begin{remark}\label{remark:cancelations}
 For an integer $p$ let $p'$ denote $\mathbbold{q}$ when $p\geq 0$ and let it denote $\mathbbold{q}+p$ otherwise. Let $\alpha$ be a canonical rank 2 $\kb$-term of the form~\eqref{eq:alpha_rank2_nf}, let $j$ be an odd position and let $\alpha_j= u_{0}x_{1}^{\omega+q_{1}} u_{1}\cdots x_{n}^{\omega+q_{n}}u_{n}$.  Then,
\begin{enumerate}
  \item $\dr_{\alpha,j}=1$ if and only if $u_0=\epsilon$, $q_1=0$ and $\alpha_{j+1}$ is of the form $\alpha_{j+1}= x_1^{\omega+p}\alpha'_{j+1}$ with $p\neq 0$. Moreover, in this case, $\wrslash_{\alpha,j}= \mathsf{b}_{x_{1}}^{-p'}  {\mathsf c}^{-1}_{x_{n},u_{n},x_{1}}$.
\item\label{previous} $\dl_{\alpha,j}=2$ if and only if  $n>1$, $q_{n-1}=q_n=0$ and $\alpha_{j-1}$ is of the form $\alpha_{j-1}= \alpha'_{j-1}x_{n-1}^{\omega+p} u_{x_{n-1},x_n} x_n^\omega u_n$. In this case, $\wlslash_{\alpha,j}=  {\mathsf c}^{-1}_{x_{n},u_{n}u_0,x_{1}}\mathsf{b}_{x_{n}}^{-\mathbbold{q}} {\mathsf c}^{-1}_{x_{n-1},u_{x_{n-1},x_{n}},x_{n}}\mathsf{b}_{x_{n-1}}^{-p'}$.
\item $\dl_{\alpha,j}=1$  if and only if  $q_n=0$,  $\alpha_{j-1}= \alpha'_{j-1}x_{n}^{\omega+p} u_n$ and,  when $n>1$,   $x_{n-1}^\omega u_{x_{n-1},x_n} x_n^\omega u_n$ is not the final $\omega 2$-portion of $\alpha_{j-1}$. In this case, $\wlslash_{\alpha,j}=  {\mathsf c}^{-1}_{x_{n},u_{n}u_0,x_{1}}\mathsf{b}_{x_{n}}^{-p'}$.
  \item for $\dr_{\alpha,j}=\dl_{\alpha,j}=1$,  $u_n=u_{x_n,x_1}$.
\end{enumerate}
\end{remark}

We can deduce already  a weaker version of Theorem~\ref{theo:equal_canonical_forms}.
\begin{corollary}\label{cor:2-length_of_cf}
Let $\alpha$ and $\beta$ be canonical forms such that $\LG\models \alpha=\beta$.
\begin{enumerate}
\item\label{item:2-length_of_cf-a} The $\kb$-terms $\alpha$ and $\beta$ have the same rank.
\item\label{item:2-length_of_cf-b} If $\rk\alpha\le 1$, then $\alpha=\beta$.
\item\label{item:2-length_of_cf-c} If $\rk\alpha=2$, then $\alpha$ and $\beta$ have the same $2$-length.
\end{enumerate}
\end{corollary}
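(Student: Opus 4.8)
The plan is to read the rank and the $2$-length of a canonical form off the combinatorial shape of its $\mathbbold{q}$-root, transporting the hypothesis $\LG\models\alpha=\beta$ to an equality of reduced words via Proposition~\ref{prop:nec_condit_eq_kterms} and then quoting Lemma~\ref{lemma:oc_crucial_variables}. Fix $\mathbbold{q}\ge\max\{\mathbbold{q}_\alpha,\mathbbold{q}_\beta\}$ once and for all. Since $\alpha$ and $\beta$ are canonical forms they have rank at most $2$; a rank $1$ canonical form is in particular in rank $1$ canonical form, and a rank $2$ canonical form is in particular a rank $2$ semi-canonical form, so Proposition~\ref{prop:nec_condit_eq_kterms} applies whenever both terms have rank at least $1$ and yields $\widetilde{\mathsf w}_\mathbbold{q}(\alpha)=\widetilde{\mathsf w}_\mathbbold{q}(\beta)$ in $\FV$.

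For part (a) I would first dispose of rank $0$. A rank $0$ canonical form is a finite word $w$, and I claim it cannot be $\LG$-equal to a $\kb$-term $\gamma$ with $\rk\gamma\ge 1$: evaluating both sides in the nilpotent semigroup $N$ consisting of all words of length at most $|w|$ together with a zero absorbing longer products (which lies in $\Nil\subseteq\LG$, nilpotent semigroups being local groups), the word $w$ goes to the nonzero element $w$, while every limit subterm, and hence $\gamma$, goes to $0$; this contradicts $N\models\alpha=\beta$. Thus if one of $\alpha,\beta$ has rank $0$, so does the other. For the remaining case I would separate rank $1$ from rank $2$ using the sign pattern of the root: for a rank $1$ canonical form the outline lies in $\mathsf V^+$, so (as noted at the end of Section~\ref{subsection:Outlines_roots_kterms}) its root is a positive word, whereas for a rank $2$ canonical form Lemma~\ref{lemma:oc_crucial_variables} guarantees exactly $m\ge 1$ surviving negative blocks, so its root genuinely contains letters of $\mathsf V^{-1}$. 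Since reduced words in $\FV$ are unique, a positive word cannot equal a word involving $\mathsf V^{-1}$, so the equality $\widetilde{\mathsf w}_\mathbbold{q}(\alpha)=\widetilde{\mathsf w}_\mathbbold{q}(\beta)$ forces $\alpha$ and $\beta$ to have the same rank.

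Parts (b) and (c) are then short. For (b), part (a) gives $\rk\beta\le 1$; if both terms have rank $0$ then $\LG\models\alpha=\beta$ means equality in the free $\LG$-semigroup, identified with $A^+$, i.e.\ $\alpha=\beta$ as words, while if both have rank $1$ this is exactly the uniqueness of rank $1$ canonical forms over $\LG$ established in~\cite{Costa&Nogueira&Teixeira:2015}. For (c), part (a) gives $\rk\beta=2$; the number of negative blocks (maximal runs of letters from $\mathsf V^{-1}$) of a reduced word is an invariant of that word, and by Lemma~\ref{lemma:oc_crucial_variables} it equals the $2$-length for a rank $2$ canonical form. Since $\widetilde{\mathsf w}_\mathbbold{q}(\alpha)=\widetilde{\mathsf w}_\mathbbold{q}(\beta)$, the two roots share this number of negative blocks, whence $\alpha$ and $\beta$ have the same $2$-length.

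The genuine content has already been spent in Lemma~\ref{lemma:oc_crucial_variables} (that the negative blocks do not cancel away and that their number records the $2$-length); the corollary merely repackages it. The only points needing a little care, which I would make explicit, are that the rank $0$ case falls outside the outline/root machinery and must be settled by the direct evaluation in $\Nil$ above, and that ``number of negative blocks'' is well defined for the canonical reduced form and is therefore preserved by the equality $\widetilde{\mathsf w}_\mathbbold{q}(\alpha)=\widetilde{\mathsf w}_\mathbbold{q}(\beta)$.
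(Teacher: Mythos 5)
Your proposal is correct and follows essentially the same route as the paper: reduce to Proposition~\ref{prop:nec_condit_eq_kterms} and read the rank and the $2$-length off the sign structure (positive word versus number of negative blocks) of the $\mathbbold{q}$-root via Lemma~\ref{lemma:oc_crucial_variables}, with part (b) delegated to the rank $\le 1$ result of~\cite{Costa&Nogueira&Teixeira:2015}. The only cosmetic difference is that you settle the rank $0$ case by an explicit evaluation in a finite nilpotent semigroup, whereas the paper simply invokes the fact that $\LI\subseteq\LG$ separates finite words from infinite pseudowords --- the same argument in substance.
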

\begin{proof} By hypothesis $\LG\models \alpha=\beta$. Hence, as \LI\ is a subpseudovariety of \LG\ that separates
different finite words and finite words from infinite pseudowords, if one of $\alpha$ and $\beta$ is a rank 0 $\kb$-term then they
 are the same $\kb$-term. We may therefore assume that $\alpha$ and $\beta$ have at least rank 1. Now, by
 Proposition~\ref{prop:nec_condit_eq_kterms}, $\widetilde{\mathsf w}_\mathbbold{q}(\alpha)=\widetilde{\mathsf w}_\mathbbold{q}(\beta)$. Thus,
 since the $\mathbbold{q}$-root of a rank 1 $\kb$-term is a word from $\mathsf{V}^+$ and, by Lemma~\ref{lemma:oc_crucial_variables}, the
 $\mathbbold{q}$-root of a rank 2 canonical form contains negative blocks, $\alpha$ and $\beta$ must have the same rank. This proves~\ref{item:2-length_of_cf-a}.

 Statement~\ref{item:2-length_of_cf-b} is an immediate consequence of~\ref{item:2-length_of_cf-a} and~\cite[Theorem 5.1]{Costa&Nogueira&Teixeira:2015}, while~\ref{item:2-length_of_cf-c} is a direct application of~\ref{item:2-length_of_cf-a} and Lemma~\ref{lemma:oc_crucial_variables}.
\end{proof}

As a consequence of the above result, to complete the proof of Theorem~\ref{theo:equal_canonical_forms} it remains to treat the instance in which $\alpha$ and $\beta$ are both rank 2 and have the same $2$-length.  This will be done in the next result.
\begin{proposition}\label{prop:suff_condit_eq_kterms}
Let $\alpha$ and $\beta$ be rank $2$ canonical forms with the same $2$-length. If
 $\widetilde{\mathsf w}_\mathbbold{q}(\alpha)=\widetilde{\mathsf w}_\mathbbold{q}(\beta)$, then $\alpha=\beta$.
\end{proposition}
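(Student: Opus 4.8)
The plan is to reduce the statement to an equality of outlines and then reconstruct, seam by seam, the outline of each term from its root. Recall that for canonical forms equality of outlines is equivalent to equality of terms (both being irreducible for rank $2$ extended shifts right), so it suffices to prove ${\mathsf w}_\mathbbold{q}(\alpha)={\mathsf w}_\mathbbold{q}(\beta)$; writing these outlines as products of their blocks, it is in turn enough to show ${\mathsf w}_{\alpha,j}={\mathsf w}_{\beta,j}$ for every $j\in\{0,1,\ldots,2m\}$.

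First I would extract, from the hypothesis $\widetilde{\mathsf w}_\mathbbold{q}(\alpha)=\widetilde{\mathsf w}_\mathbbold{q}(\beta)$, the equality of the corresponding remainders ${\mathsf r}_{\alpha,j}={\mathsf r}_{\beta,j}$. By Lemma~\ref{lemma:oc_crucial_variables} each interior remainder is nonempty (it contains a crucial variable), and the boundary remainders are nonempty as well since they carry $\mathsf{i}(\alpha)$ and $\mathsf{t}(\alpha)$; as odd-indexed remainders are words on $\mathsf{V}^{-1}$ and even-indexed ones words on $\mathsf V$, the root of a rank $2$ canonical form is a concatenation of alternating maximal negative and positive runs, which are exactly its nonempty remainders. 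Hence the block decomposition of the root is intrinsic to the reduced word, so two canonical forms of the same $2$-length $m$ with equal roots have ${\mathsf r}_{\alpha,j}={\mathsf r}_{\beta,j}$ for all $j$, and in particular $\mathsf{i}(\alpha)=\mathsf{i}(\beta)$ and $\mathsf{t}(\alpha)=\mathsf{t}(\beta)$ are read off from the two ends.

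It then remains to recover the full blocks from the remainders, i.e.\ to show $\wlslash_{\alpha,j}=\wlslash_{\beta,j}$ and $\wrslash_{\alpha,j}=\wrslash_{\beta,j}$; by the local influence property of Lemma~\ref{lemma:oc_crucial_variables} it is enough to treat, for each odd $j$, the two seams governed by $\dl_{\alpha,j}=\dr_{\alpha,j-1}$ and $\dr_{\alpha,j}=\dl_{\alpha,j+1}$. The point is that the amount of cancellation at a seam is visible in the equal remainders: the wrap-around crucial variable ${\mathsf c}^{-1}_{x_{n},u_{n}u_{0},x_{1}}$ occurs exactly at the two ends of the negative block $j$, so whether it survives on the right (the case $\dr_{\alpha,j}=0$, in which ${\mathsf r}_{\alpha,j}$ ends with that crucial) or is cancelled (the case $\dr_{\alpha,j}=1$, in which ${\mathsf r}_{\alpha,j}$ ends with a base power or a different crucial) is detectable from the last letter of ${\mathsf r}_{\alpha,j}$, and symmetrically from the first letters of the neighbouring remainders. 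Comparing these for $\alpha$ and $\beta$, and using the bounds $\dl_{\alpha,j}\le 2$ and $\dr_{\alpha,j}\le 1$, I would show that $\dl$ and $\dr$ agree at every seam. Finally, the explicit descriptions of $\wlslash_{\alpha,j}$ and $\wrslash_{\alpha,j}$ in Remark~\ref{remark:cancelations}, which give the cancelled words in terms of the matching $\omega$-portions and the base variable, force $\wlslash_{\alpha,j}=\wlslash_{\beta,j}$ and $\wrslash_{\alpha,j}=\wrslash_{\beta,j}$ once the remainders and the cancellation counts coincide. Thus ${\mathsf w}_{\alpha,j}={\mathsf w}_{\beta,j}$ for all $j$, whence the outlines, and so the terms, are equal.

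The main obstacle is exactly this seam analysis: establishing that no structural data is irretrievably lost in the free-group reduction. The delicate configurations are those where two consecutive limit terms almost agglutinate — the situations excluded in Step~(1) of the algorithm and catalogued in the case analysis of Lemma~\ref{lemma:oc_crucial_variables} — where a residual base power may reduce to the empty exponent; there one must argue from the connecting crucial variable of the adjacent remainder rather than from the base power itself. Organizing the argument by induction on $m$ through the decomposition into $\vec\alpha$ and $\cev\alpha$ already used in Lemma~\ref{lemma:oc_crucial_variables} should keep this analysis finite and local, reducing every seam to the already understood $m=1$ situation.
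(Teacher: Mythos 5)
Your skeleton coincides with the paper's: reduce to equality of outlines (hence of blocks, since canonical forms are irreducible for rank 2 extended shifts right), write each block as $\wlslash_{\gamma,j}{\mathsf r}_{\gamma,j}\wrslash_{\gamma,j}$, deduce ${\mathsf r}_{\alpha,j}={\mathsf r}_{\beta,j}$ for all $j$ from the nonemptiness of remainders (Lemma~\ref{lemma:oc_crucial_variables}), prove $\dr_{\alpha,j}=\dr_{\beta,j}$ by inspecting the last letter of ${\mathsf r}_{\gamma,j}$ and the first letter of ${\mathsf r}_{\gamma,j+1}$, and recover the cancelled words from the explicit descriptions in Remark~\ref{remark:cancelations} once the cancellation counts agree. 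All of this is correct and is precisely the content of the paper's first two claims.

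The gap is the remaining step, $\dl_{\alpha,j}=\dl_{\beta,j}$, which is the bulk of the paper's proof and which you assert rather than establish. Your stated mechanism --- that ``the amount of cancellation at a seam is visible in the equal remainders'' --- fails for the left cancellations: since $\crl{{\mathsf w}_{\gamma,j}}$ equals the $1$-length of $\gamma_j$ plus one, and the $1$-lengths of $\alpha_j$ and $\beta_j$ need not coincide a priori, equality of remainders is perfectly compatible with, say, $\dl_{\alpha,j}=2$ and $\dl_{\beta,j}=0$ (then $\beta_j$ simply has $1$-length $n-2$). Ruling such mismatches out is where canonicity enters in an essential way: one must show, by a case analysis on the pair $(\dl_{\alpha,j},\dl_{\beta,j})$, that each mismatch forces one of the two terms to admit a limit term shortening of type 1 or 2 --- i.e.\ a violation of Step (3) of the reduction algorithm, not of the agglutination Step (1), which is where you locate the difficulty. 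Moreover, in the configurations where the word $u_0$ splits across $\alpha_{j-2}$ and $\alpha_{j-1}$, the contradiction is not local to positions $j-1,j,j+1$: the paper must run an induction on the position $j$, invoking $\dl_{\alpha,j-2}=\dl_{\beta,j-2}$ to get $\alpha_{j-2}=\beta_{j-2}$ and only then close the case. Your proposed induction on $m$ through $\vec\alpha$ and $\cev\alpha$ does not obviously supply this backward propagation along the term. Without this analysis the argument is incomplete at its central point.
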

\begin{proof}
Let $\alpha=\alpha_0\alpha_1^{\omega-1}\alpha_2\cdots \alpha_{2m-1}^{\omega-1}\alpha_{2m}$ and $\beta=\beta_0\beta_1^{\omega-1}\beta_2\cdots \beta_{2m-1}^{\omega-1}\beta_{2m}$  be the rank configurations of $\alpha$ and $\beta$ and assume that $\widetilde{\mathsf w}_\mathbbold{q}(\alpha)=\widetilde{\mathsf w}_\mathbbold{q}(\beta)$. Note that all remainders ${\mathsf r}_{\alpha,i}$ and ${\mathsf r}_{\beta,i}$ are non-empty by Lemma~\ref{lemma:oc_crucial_variables}. Therefore, the assumption  $\widetilde{\mathsf w}_\mathbbold{q}(\alpha)=\widetilde{\mathsf w}_\mathbbold{q}(\beta)$ implies that ${\mathsf r}_{\alpha,i}={\mathsf r}_{\beta,i}$ for every $i\in\{0,1,\ldots,2m\}$. Since $\alpha$ and $\beta$ are canonical forms, we observed already that $\alpha=\beta$ if and only if ${\mathsf w}_\mathbbold{q}(\alpha)={\mathsf w}_\mathbbold{q}(\beta)$. On the other hand, ${\mathsf w}_\mathbbold{q}(\alpha)={\mathsf w}_\mathbbold{q}(\beta)$ if and only if ${\mathsf w}_{\alpha,i}={\mathsf w}_{\beta,i}$ for all $i$. Now, recall that, for $\gamma\in\{\alpha,\beta\}$, ${\mathsf w}_{\gamma,i}=\wlslash_{\gamma,i}{\mathsf r}_{\gamma,i}\wrslash_{\gamma,i}$ that, for $i\neq 0$, $\wrslash_{\gamma,i-1}$ and $\wlslash_{\gamma,i}$ are mutually inverse words in $\FV$ and that $\wlslash_{\gamma,0}=\wrslash_{\gamma,2m}=\epsilon$. Therefore, to deduce the equality $\alpha=\beta$ it suffices to prove that, for each odd position $j\in\{1,3,\ldots,2m-1\}$,
\begin{equation}\label{eq:same_cancelations}
 \wlslash_{\alpha,j}=\wlslash_{\beta,j}\mbox{ and }\wrslash_{\alpha,j}=\wrslash_{\beta,j}.
\end{equation}
Throughout, let  $j\in\{1,3,\ldots,2m-1\}$ be an odd integer and let  $\alpha_j=u_{0}x_{1}^{\omega+q_{1}} u_{1}\cdots x_{n}^{\omega+q_{n}}u_{n}$ and $\beta_j=v_{0}y_{1}^{\omega+p_{1}} v_{1}\cdots y_{k}^{\omega+p_{k}}v_{k}$ be the rank configurations of $\alpha_j$ and $\beta_j$. To prove~\eqref{eq:same_cancelations}, let us show first that ${\mathsf w}_{\alpha,j}$ and ${\mathsf w}_{\beta,j}$ admit the same number of right cancelations of occurrences of crucial variables.
\begin{claim}
$\dr_{\alpha,j}=\dr_{\beta,j}$.
\end{claim}
\begin{proof}
 We know from Lemma~\ref{lemma:oc_crucial_variables} that $\dr_{\alpha,j}, \dr_{\beta,j}\in\{0,1\}$. Suppose that  $\dr_{\alpha,j}=1$ and $\dr_{\beta,j}=0$. As observed in Remark~\ref{remark:cancelations} $(a)$, the first equality gives  $u_0=\epsilon$, $q_1=0$ and $\alpha_{j+1}= x_1^{\omega+p}\alpha'_{j+1}$ for some integer $p\neq 0$. Hence  ${\mathsf r}_{\alpha,j}={\mathsf r}'_{\alpha,j}{\mathsf b}_{x_1}^p$ when $p<0$, and  ${\mathsf r}_{\alpha,j+1}={\mathsf b}_{x_1}^p{\mathsf r}'_{\alpha,j+1}$ when $p>0$. The second equality implies that ${\mathsf r}_{\beta,j}$ ends with a crucial variable and that ${\mathsf r}_{\beta,j+1}$ either begins with a crucial variable, or  is equal to the final variable ${\mathsf t}(\beta)$ for  $j+1=2m$ and $\alpha_{2m}\in A^*$. This  contradicts the fact that ${\mathsf r}_{\alpha,i}={\mathsf r}_{\beta,i}$ for all $i$. Therefore $\dr_{\alpha,j}=1$ and $\dr_{\beta,j}=0$ does not apply, and neither does  $\dr_{\alpha,j}=0$ and $\dr_{\beta,j}=1$ by symmetry, thus proving that $\dr_{\alpha,j}= \dr_{\beta,j}$.
\end{proof}

Let us now show  the following:
\begin{claim}
If $\dl_{\alpha,j}=\dl_{\beta,j}$, then $\wlslash_{\alpha,j}=\wlslash_{\beta,j}$ and $\wrslash_{\alpha,j}= \wrslash_{\beta,j}$ (and, so, $\alpha_j=\beta_j$).
\end{claim}
\begin{proof}
Suppose that $\dl_{\alpha,j}=\dl_{\beta,j}$, whence $\dd_{\alpha,j}=\dd_{\beta,j}$ since, by Claim 1, $\dr_{\alpha,j}=\dr_{\beta,j}$. Then, from ${\mathsf r}_{\alpha,j} ={\mathsf r}_{\beta,j}$ it follows that $n=k$ and that ${\mathsf w}_{\alpha,j}$ and ${\mathsf w}_{\beta,j}$ are of the form
\begin{alignat*}{2}
{\mathsf w}_{\alpha,j}&=\mathsf{c}_{x_{n}, u_{n}u_{0},x_{1}}^{-1} \mathsf{b}_{x_{n}}^{-\mathbbold q_{n}}\mathsf{c}_{x_{n-1}, u_{n-1},x_{n}}^{-1}
   \cdots \mathsf{b}_{x_{2}}^{-\mathbbold q_{2}}\mathsf{c}_{x_{1}, u_{1},x_{2}}^{-1} \mathsf{b}_{x_{1}}^{-\mathbbold q_{1}}
   \mathsf{c}_{x_{n}, u_{n}u_{0},x_{1}}^{-1}
   \\[1mm]
{\mathsf w}_{\beta,j}&= \mathsf{c}_{y_{n}, v_{n}v_{0},y_{1}}^{-1} \mathsf{b}_{y_{n}}^{-\mathbbold p_{n}}\mathsf{c}_{y_{n-1}, v_{n-1},y_{n}}^{-1}
   \cdots \mathsf{b}_{y_{2}}^{-\mathbbold p_{2}}\mathsf{c}_{y_{1}, v_{1},y_{2}}^{-1} \mathsf{b}_{y_{1}}^{-\mathbbold p_{1}}
   \mathsf{c}_{y_{n}, v_{n}v_{0},y_{1}}^{-1}.
\end{alignat*}

We begin by showing the equality  $\wrslash_{\alpha,j}= \wrslash_{\beta,j}$ which is easier to prove. If $\dr_{\alpha,j}=0$ then $\wrslash_{\alpha,j}=\epsilon=\wrslash_{\beta,j}$. It remains to consider  $\dr_{\alpha,j}=1$. In this case $\dl_{\alpha,j}\leq 1$ by Lemma~\ref{lemma:oc_crucial_variables} and, by Remark~\ref{remark:cancelations} $(a)$,  $u_0=v_0=\epsilon$, $q_1=p_1=0$, $\alpha_{j+1}= x_1^{\omega+r}\alpha'_{j+1}$, $\beta_{j+1}= x_1^{\omega+s}\beta'_{j+1}$ for some non-zero integers $r$ and $s$,  $\wrslash_{\alpha,j}= \mathsf{b}_{x_{1}}^{-r'}  {\mathsf c}^{-1}_{x_{n},u_{n},x_{1}}$ and  $\wrslash_{\beta,j}= \mathsf{b}_{y_{1}}^{-s'}  {\mathsf c}^{-1}_{y_{n},v_{n},y_{1}}$ where, for $t\in\{r,s\}$, $t'=\mathbbold{q}$ when $t> 0$ and  $t'=\mathbbold{q}+t$ when $t< 0$. So, as ${\mathsf r}_{\alpha,j} ={\mathsf r}_{\beta,j}$, one deduces immediately that $r=s$,  $x_1=y_1$ and $x_n=y_n$. To complete the proof of $\wrslash_{\alpha,j}= \wrslash_{\beta,j}$ it remains to show that $u_n=v_n$. For $\dl_{\alpha,j}=0$, this follows trivially from the equalities ${\mathsf r}_{\alpha,j} ={\mathsf r}_{\beta,j}$ and $u_0=v_0$. In case $\dl_{\alpha,j}=1$, one deduces from Remark~\ref{remark:cancelations} $(d)$ that $u_n=u_{x_n,x_1}=v_n$.

  Let us now show the equality $\wlslash_{\alpha,j}=\wlslash_{\beta,j}$. By Lemma~\ref{lemma:oc_crucial_variables}, $\dl_{\alpha,j}\in\{0,1,2\}$. We have therefore to consider three cases.
 \begin{enumerate}[label=\arabic*)]
\item  $\dl_{\alpha,j}=0$. In this case one deduces trivially that $\wlslash_{\alpha,j}=\epsilon=\wlslash_{\beta,j}$.
\item  $\dl_{\alpha,j}=1$. Then, by Remark~\ref{remark:cancelations} $(c)$,   $q_n=p_n=0$, $\alpha_{j-1}= \alpha'_{j-1}x_{n}^{\omega+r} u_n$, $\beta_{j-1}= \beta'_{j-1}y_{n}^{\omega+s} v_n$ for some integers $r$ and $s$,  $\wlslash_{\alpha,j}={\mathsf c}^{-1}_{x_{n},u_{n}u_0,x_{1}}\mathsf{b}_{x_{n}}^{-r'}$ and  $\wlslash_{\beta,j}={\mathsf c}^{-1}_{y_{n},v_{n}v_0,y_{1}}\mathsf{b}_{y_{n}}^{-s'}$ with $r'$ and $s'$ as above. The equality $\wlslash_{\alpha,j}=\wlslash_{\beta,j}$ is now an immediate consequence of the fact that ${\mathsf r}_{\alpha,j}\wrslash_{\alpha,j} ={\mathsf r}_{\beta,j}\wrslash_{\beta,j}$.
\item  $\dl_{\alpha,j}=2$. In this case $\dr_{\alpha,j}=0$ by Lemma~\ref{lemma:oc_crucial_variables} and one deduces from Remark~\ref{remark:cancelations} $(b)$ that $q_n=q_{n-1}=p_n=p_{n-1}=0$, $\alpha_{j-1}= \alpha'_{j-1}x_{n-1}^{\omega+r} u_{x_{n-1},x_n} x_n^\omega u_n$, $\beta_{j-1}= \beta'_{j-1}y_{n-1}^{\omega+s} u_{y_{n-1},y_n} y_n^\omega v_n$, $\wlslash_{\alpha,j}=  {\mathsf c}^{-1}_{x_{n},u_{n}u_0,x_{1}}\mathsf{b}_{x_{n}}^{-\mathbbold{q}} {\mathsf c}^{-1}_{x_{n-1},u_{x_{n-1},x_{n}},x_{n}}\mathsf{b}_{x_{n-1}}^{-r'}$ and  $\wlslash_{\beta,j}=  {\mathsf c}^{-1}_{y_{n},v_{n}v_0,y_{1}}\mathsf{b}_{y_{n}}^{-\mathbbold{q}} {\mathsf c}^{-1}_{y_{n-1},u_{y_{n-1},y_{n}},y_{n}}\mathsf{b}_{y_{n-1}}^{-s'}$.
 As above,  one deduces immediately from ${\mathsf r}_{\alpha,j}\wrslash_{\alpha,j} ={\mathsf r}_{\beta,j}\wrslash_{\beta,j}$ that ${\mathsf c}_{x_{n},u_{n}u_0,x_{1}}={\mathsf c}_{y_{n},v_{n}v_0,y_{1}}$ and $r'=s'$. So, to deduce $\wlslash_{\alpha,j}= \wlslash_{\beta,j}$ in this case, it remains to show that $x_{n-1}=y_{n-1}$. Now, ${\mathsf r}_{\alpha,j-1}$ ends with one of the variables $ \mathsf{b}_{x_{n-1}}$,  ${\mathsf c}_{\underline{\ } ,\underline{\ } ,x_{n-1}}$ and ${\mathsf i}_{\underline{\ } ,x_{n-1}}$ and, similarly,  ${\mathsf r}_{\beta,j-1}$ ends with one of the variables $\mathsf{b}_{y_{n-1}}$,  ${\mathsf c}_{\underline{\ } ,\underline{\ } ,y_{n-1}}$ and ${\mathsf i}_{\underline{\ } ,y_{n-1}}$. Since $ {\mathsf r}_{\alpha,j-1}={\mathsf r}_{\beta,j-1}$ it follows that $x_{n-1}=y_{n-1}$.
 \end{enumerate}
 We have proved that $\wlslash_{\alpha,j}=\wlslash_{\beta,j}$ in all cases. This  concludes the proof of the claim.
\end{proof}

We now show  that the number of left cancelations of occurrences of crucial variables coincides in ${\mathsf w}_{\alpha,j}$ and ${\mathsf w}_{\beta,j}$ which, in view of Claim 2, will be enough to conclude~\eqref{eq:same_cancelations}.
\begin{claim}
$\dl_{\alpha,j}=\dl_{\beta,j}$.
\end{claim}
\begin{proof}
The proof  of this claim uses induction on $j$.  By Lemma~\ref{lemma:oc_crucial_variables},  both $\dl_{\alpha,j}$ and $\dl_{\beta,j}$ belong to $\{0,1,2\}$. There are, thus, three cases to look for regarding the value of $\dl_{\beta,j}$.

\begin{enumerate}[label=\textbf{Case \arabic*.},ref=\arabic*,labelwidth=-10ex,labelsep=*,leftmargin=3ex,itemindent=7ex,parsep=1ex]
\item $\dl_{\beta,j}=0$.  By contradiction, suppose that $\dl_{\alpha,j}\neq 0$. Hence, there are two possibilities.
\begin{enumerate}[label=\textbf{Case 1.\arabic*.},ref=\arabic*,labelwidth=-10ex,labelsep=*,leftmargin=2ex,itemindent=12ex,parsep=1ex]
\item  $\dl_{\alpha,j}=2$.\label{dlalpha_2}
  Then, by Remark~\ref{remark:cancelations} $(b)$, $n>1$, $q_{n-1}=q_n=0$ and $\alpha_{j-1}= \alpha'_{j-1}x_{n-1}^{\omega+p} u_{x_{n-1},x_n} x_n^\omega u_n$. As above in the proof of Claim 1, for $p\neq 0$ this leads to a contradiction. Whence we assume that $p=0$. We  have that $\dr_{\alpha,j}=0$ by Lemma~\ref{lemma:oc_crucial_variables} and so that $\dr_{\beta,j}=0$ by Claim 1. Hence, $k=n-2$ and
\begin{alignat*}{2}
{\mathsf r}_{\alpha,j}&= {\mathsf c}^{-1}_{x_{n-2},u_{n-2},x_{n-1}  } \mathsf{b}_{x_{n-2}}^{-\mathbbold{q}_{n-2}}    {\mathsf c}^{-1}_{x_{n-3},u_{n-3},x_{n-2}}\mathsf{b}_{x_{n-3}}^{-\mathbbold{q}_{n-3}}     \cdots    {\mathsf c}^{-1}_{x_1,u_1,x_2}  \mathsf{b}_{x_{1}}^{-\mathbbold{q}_{1}}   {\mathsf c}^{-1}_{x_n,u_nu_0,x_1},\\[1mm]
{\mathsf r}_{\beta,j}&={\mathsf c}^{-1}_{y_{n-2},v_{n-2}v_0,y_1}\mathsf{b}_{y_{n-2}}^{-\mathbbold{p}_{n-2}}     {\mathsf c}^{-1}_{y_{n-3},v_{n-3},y_{n-2}}\mathsf{b}_{y_{n-3}}^{-\mathbbold{p}_{n-3}}  \cdots    {\mathsf c}^{-1}_{y_1,v_1,y_2}  \mathsf{b}_{y_{1}}^{-\mathbbold{p}_{1}}  {\mathsf c}^{-1}_{y_{n-2},v_{n-2}v_0,y_1}.
\end{alignat*}
  As ${\mathsf r}_{\alpha,j}={\mathsf r}_{\beta,j}$, we conclude that  $x_n=y_{n-2}$, $x_{n-1}=y_1$,  $u_{n-2}=v_{n-2}v_0=u_nu_0$, and, for $i\in\{1,\ldots , n-2\}$, $x_i=y_i$, $q_i=p_i$ and, when $i\neq n-2$, $u_i=v_i$.

 Furthermore,  ${\mathsf r}_{\beta,j+1}$ begins with a crucial variable of the form  ${\mathsf c}_{y_k,v_k\underline{\ } ,\underline{\ }}$ or it is equal to a terminal variable of the form ${\mathsf t}_{y_k,v_k\underline{\ } }$.
      Moreover,  either ${\mathsf r}_{\alpha,j+1}$ begins with a crucial variable of the form  ${\mathsf c}_{x_n,u_n\underline{\ } ,\underline{\ }}$, or it is equal to a terminal variable of the form ${\mathsf t}_{x_n,u_n\underline{\ } }$.
    As $u_nu_0=v_{n-2}v_0$, ${\mathsf r}_{\alpha,j+1}={\mathsf r}_{\beta,j+1}$  
    and it is not possible to make a rank 2 shift  right  at position $j$, neither in $\alpha$ nor in $\beta$,  we must have  $u_n=v_{n-2}$  and so $u_0=v_0$.
   We  have also that   either ${\mathsf r}_{\beta,j-1}$ ends with a crucial variable of the form  ${\mathsf c}_{\underline{\ },\underline{\ }v_0 ,y_1}$ or it is equal to an initial variable of the form ${\mathsf i}_{\underline{\ }v_0,y_1 }$, and that either ${\mathsf r}_{\alpha,j-1}$ ends with a crucial variable of the form  ${\mathsf c}_{\underline{\ },\underline{\ } ,x_{n-1}}$ or it is equal to an initial variable of the form ${\mathsf i}_{\underline{\ },x_{n-1} }$.  Hence,  $\alpha_j^{\omega-1}= (u_0x_1^{\omega+p_1}\cdots   u_{n-3} x_{n}^{\omega} u_nu_0 x_{1}^{\omega}u_{n-1}x_{n}^{\omega}u_n)^{\omega-1}$  and one of the  two following situations happen:
   \begin{enumerate}[label=(\roman*),ref=\roman*,labelwidth=-8ex,labelsep=*,leftmargin=5ex,itemindent=1ex,parsep=1ex]
  \item\label{primeiro}  $ \alpha_{j-1}=\alpha'_{j-1}u_0x_{1}^{\omega}u_{n-1}x_{n}^{\omega}u_n$;
  \item\label{segundo}  $ \alpha_{j-1}=u''_0x_{1}^{\omega}u_{n-1}x_{n}^{\omega}u_n$, $j>1$   and  $u'_0$ is a non-empty  suffix of $\alpha_{j-2}$ with $u_0=u'_0u''_0$.
\end{enumerate}
  If situation~(\ref{primeiro}) holds, $\alpha$ is not a canonical form since  it is possible to make a limit term shortening of type 1 and  replace $\alpha_{j-1}\alpha_j^{\omega-1}$ by $\alpha'_{j-1}(u_0x_1^{\omega+p_1}\cdots   u_{n-3} x_{n}^{\omega} u_n)^{\omega-1}$.  In particular,  this proves already the impossibility of Case~1.1. for $j=1$.

 Suppose now that situation~(\ref{segundo}) holds. Then $j>1$ and we will use the induction hypothesis to obtain a contradiction.
 Note that $x_n^\omega u_nu'_0 $  can not be the final $\omega$-portion of $\alpha_{j-2}$ (otherwise it would be possible to make an agglutination). Consequently,  $\dr_{\beta,j-2}=\dr_{\alpha,j-2}=0$   and  $\crl{{\mathsf r}_{\beta,j-1}}=\crl{{\mathsf r}_{\alpha,j-1}}=1$.
   Furthermore  ${\mathsf r}_{\beta,j-1}={\mathsf r}_{\alpha,j-1}={\mathsf c}_{z, wu_0,x_1}$ where $z^\omega wu'_0$ is the final $\omega$-portion of $\alpha_{j-2}$. Hence, the final $\omega$-portion of $\beta_{j-2}$ is $z^\omega w'$ with $w'$ a prefix of $w$. Assuming  by induction hypothesis that  $\dl_{\beta,j-2}=\dl_{\alpha,j-2}$, we have from Claim~2 that  $\alpha_{j-2}=\beta_{j-2}$, and  one deduces that $w=w'$ and $u'_0=\epsilon$. So, actually, situation~(\ref{segundo}) can not happen either.

 \item \label{dlalpha_1} $\dl_{\alpha,j}=1$. So $\alpha_{j-1}$ is of the form $\alpha_{j-1}=\alpha'_{j-1}x_{n}^{\omega+p}u_n$, $k=n-1$ and $q_n=0$.
   If  $p\neq 0$, then we get a contradiction analogously. So, we assume additionally that  $p=0$.
       Thereby, we get
\begin{alignat*}{2}
{\mathsf r}_{\alpha,j}&= {\mathsf c}^{-1}_{x_{n-1},u_{n-1},x_{n}}\mathsf{b}_{x_{n-1}}^{-\mathbbold{q}_{n-1}}    {\mathsf c}^{-1}_{x_{n-2},u_{n-2},x_{n-1}}\mathsf{b}_{x_{n-2}}^{-\mathbbold{q}_{n-2}}     \cdots    {\mathsf c}^{-1}_{x_1,u_1,x_2}{\mathsf r}'_{\alpha,j},\\[1mm]
{\mathsf r}_{\beta,j}&= {\mathsf c}^{-1}_{y_{n-1},v_{n-1}v_0,y_{1}}\mathsf{b}_{y_{n-1}}^{-\mathbbold{p}_{n-1}}   {\mathsf c}^{-1}_{y_{n-2},v_{n-2},y_{n-1}}\mathsf{b}_{y_{n-2}}^{-\mathbbold{p}_{n-1}}\cdots    {\mathsf c}^{-1}_{y_1,v_1,y_2}{\mathsf r}'_{\beta,j},
\end{alignat*}
for some words ${\mathsf r}'_{\alpha,j}, {\mathsf r}'_{\beta,j}\in(\mathsf{V}^{-1})^*$.
As ${\mathsf r}_{\alpha,j}={\mathsf r}_{\beta,j}$, we conclude that ${\mathsf r}'_{\alpha,j}= {\mathsf r}'_{\beta,j}$, $u_{n-1}=v_{n-1}v_0$, $x_n=y_1$,  for $i\in\{1,\ldots , n-1\}$, $x_i=y_i$   and $p_i=q_i$ if $i\ge 2$, and  $u_i=v_i$ when $i\neq n-1$. Whence
\begin{alignat*}{2}
\alpha_{j-1}\alpha_j^{\omega-1}=&\ \alpha'_{j-1}x_n^\omega u_n(u_{0}x_{n}^{\omega+q_{1}} u_{1}x_2^{\omega+q_2}u_2 \cdots
x_{n-1}^{\omega+q_{n-1}}u_{n-1}x_n^\omega u_n)^{\omega-1}, \\
\beta_{j-1}\beta_j^{\omega-1}=&\ \beta_{j-1}(v_{0}x_{n}^{\omega+p_{1}} u_{1}x_2^{\omega+q_2}u_2 \cdots
x_{n-1}^{\omega+q_{n-1}}v_{n-1})^{\omega-1}.
\end{alignat*}

 Suppose now that $\dr_{\alpha,j}=1$. In this case $\dr_{\beta,j}=1$,  $q_1=p_1=0$ and  $u_0=v_0=\epsilon$. So, $u_{n-1}=v_{n-1}$, it do not occur crucial variables neither in ${\mathsf r}'_{\alpha,j}$ nor in $ {\mathsf r}'_{\beta,j}$  and $\alpha_{j-1}\alpha_j^{\omega-1}\alpha_{j+1}$ is of the form
$$\alpha_{j-1}\alpha_j^{\omega-1}\alpha_{j+1}=\alpha'_{j-1}x_{n}^{\omega}u_n(x_{n}^{\omega} u_{1}\cdots
x_{n-1}^{\omega+q_{n-1}}u_{n-1}x_{n}^{\omega}u_n)^{\omega-1}x_{n}^{\omega+r}\alpha'_{j+1}$$ with $r\neq 0$. Therefore, $\alpha$ is not a canonical form since  by the application of a limit term shortening of type 2, it should be replaced  by
$\alpha'_{j-1}(x_{n}^{\omega} u_{1}\cdots
x_{n-1}^{\omega+q_{n-1}}u_{n-1})^{\omega-1}x_{n}^{\omega+r}\alpha'_{j+1}$.

Suppose next that $\dr_{\alpha,j}=0$ and so that $\dr_{\beta,j}=0$. Then ${\mathsf r}'_{\alpha,j}  =
\mathsf{b}_{x_{n}}^{-\mathbbold{q}_{1}}  {\mathsf c}_{x_{n}, u_{n}u_0,x_{n}}^{-1}$ and ${\mathsf r}'_{\beta,j}=
\mathsf{b}_{x_{n}}^{-\mathbbold{p}_{1}} {\mathsf c}_{x_{n-1}, v_{n-1}v_0,x_{n}}^{-1}$. Therefore  $x_n=x_{n-1}$,
$q_1=p_1$ and $u_nu_0=v_{n-1}v_0\,  (=u_{n-1})$. As in the previous case, analysing the first crucial variable
of the remainder at position $j+1$ and the last one of the  remainder at position $j-1$, we conclude that it
must be $u_n=v_{n-1}$, and so that $u_0=v_0$,   and $u_0 x_{n}^{\omega}u_n $ is a suffix of
$\alpha_{j-2}\alpha_{j-1}$.  Consequently,  one of the   two following situations happen:
   \begin{enumerate}[label=(\roman*),ref=\roman*,labelwidth=-8ex,labelsep=*,leftmargin=5ex,itemindent=1ex,parsep=1ex]
  \item\label{pprimeiro}  $ \alpha_{j-1}=\alpha'_{j-1}u_0 x_{n}^{\omega}u_n $;
  \item\label{ssegundo}  $\alpha_{j-1}=u''_0 x_{n}^{\omega}u_n$, $j>1$   and  $u'_0$ is a non-empty  suffix of $\alpha_{j-2}$ where $u_0=u'_0u''_0$.
\end{enumerate}

If~(\ref{pprimeiro}) holds, then we can apply a type  1 limit term  shortening in
$\alpha_{j-1}\alpha_j^{\omega-1}$ and replace it by $\alpha'_{j-1}(u_{0}x_{n}^{\omega+q_{1}} u_{1}
x_{2}^{\omega+q_{2}}\cdots x_{n}^{\omega+q_{n-1}}u_{n})^{\omega-1}$. Thus $\alpha$ is not a canonical form.
  Notice that, if $j=1$,  this proves the impossibility of Case~1.2.  If $j>1$, it remains to consider situation~(\ref{ssegundo}), in which case $\dl_{\beta,j-1}=\dl_{\alpha,j-1}=0$ and  $\crl{{\mathsf r}_{\beta,j-1}}=\crl{{\mathsf r}_{\alpha,j-1}}=1$.
   Furthermore  ${\mathsf r}_{\beta,j-1}={\mathsf r}_{\alpha,j-1}={\mathsf c}_{z, wu_0,x_n}$ where $z^\omega wu'_0$ is the final $\omega$-portion of $\alpha_{j-2}$. Consequently, the final $\omega$-portion of $\beta_{j-2}$ is $z^\omega w'$ with $w'$ a prefix of $w$. Again assuming  by induction hypothesis that  $\dl_{\beta,j-2}=\dl_{\alpha,j-2}$, we have that  $\alpha_{j-2}=\beta_{j-2}$, and this implies that $w=w'$ and $u'_0=\epsilon$. So, situation~(\ref{ssegundo}) does not actually occur either.
\end{enumerate}
In both situations we reached a contradiction. Therefore $\dl_{\alpha,j}=0$ when $\dl_{\beta,j}=0$. By symmetry
it follows that $\dl_{\alpha,j}=0$ if and only if $\dl_{\beta,j}=0$.

\item  $\dl_{\beta,j}=2$. Then $\dr_{\alpha,j}=\dr_{\beta,j}=0$,
    and $\dl_{\alpha,j}\neq 0$ by Case 1. Suppose that $\dl_{\alpha,j}=1$. Hence  $k=n+1$, $q_n=p_n=p_{n+1}=0$, and
$\alpha_{j-1}$  and $\beta_{j-1}$ are of the forms, respectively,
$\alpha_{j-1}=\alpha'_{j-1}x_{n}^{\omega+q}u_n$ and
$\beta_{j-1}= \beta'_{j-1}y_{n}^{\omega+p}v_{n}y_{n+1}^{\omega}v_{n+1}$.
Furthermore,
\begin{alignat*}{2}
{\mathsf r}_{\alpha,j}& =\mathsf{b}_{x_{n}}^{q'}\mathsf{c}_{x_{n-1}, u_{n-1},x_{n}}^{-1}\cdots \mathsf{b}_{x_{2}}^{-\mathbbold q_{2}}\mathsf{c}_{x_{1}, u_{1},x_{2}}^{-1} \mathsf{b}_{x_{1}}^{-\mathbbold q_{1}}\mathsf{c}_{x_{n}, u_{n}u_{0},x_{1}}^{-1},\\[1mm]
{\mathsf r}_{\beta,j}& =\mathsf{b}_{y_{n}}^{p'}\mathsf{c}_{y_{n-1}, v_{n-1},y_{n}}^{-1}\cdots \mathsf{b}_{y_{2}}^{-\mathbbold p_{2}}\mathsf{c}_{y_{1}, v_{1},y_{2}}^{-1} \mathsf{b}_{y_{1}}^{-\mathbbold p_{1}}\mathsf{c}_{y_{n+1}, v_{n+1}v_{0},y_{1}}^{-1},
\end{alignat*}
where, for $t\in\{p,q\}$, $t'$ is $0$ when $t\geq 0$ and it is $t$ when $t< 0$. From the equality ${\mathsf
r}_{\alpha,j}={\mathsf r}_{\beta,j}$  it then follows that $q'=p'$,   $x_n=y_n=y_{n+1}$, $u_nu_0=v_{n+1}v_0$
and, for $i\in\{1, \cdots, n-1\}$,   $ x_i=y_i$, $u_i=v_i$ and $p_i=q_i$.  Again, analysing the first crucial
variables of ${\mathsf r}_{\alpha,j+1}$ and $ {\mathsf r}_{\beta,j+1}$, we conclude that $u_n=v_{n+1}$, so that
$u_0=v_0$. Whence,
\begin{alignat*}{2}
\beta_{j-1}\beta_j^{\omega-1}=\beta'_{j-1}x_{n}^{\omega+p}v_{n}x_{n}^{\omega}u_n
(u_{0}x_{1}^{\omega+q_{1}}u_1\cdots  x_{n-1}^{\omega+ q_{n-1}}u_{n-1} x_{n}^{\omega}v_{n}x_{n}^{\omega}u_n)^{\omega-1}.
\end{alignat*}
So, $\beta$ is not a canonical term, as it allows the application of a limit term shortening of type 2 in which
$\beta_{j-1}\beta_j^{\omega-1}$ should be replaced by $\beta'_{j-1}x_{n}^{\omega+p}u_n
(u_{0}x_{1}^{\omega+q_{1}}u_1\cdots  x_{n-1}^{\omega+ q_{n-1}}u_{n-1} x_{n}^{\omega}u_n)^{\omega-1}$. This is in
contradiction with the hypothesis and so $\dl_{\alpha,j}=2=\dl_{\beta,j}$.

\item $\dl_{\beta,j}=1$. From the previous cases it is now immediate that $\dl_{\alpha,j}=\dl_{\beta,j}=1$.
\end{enumerate}

We have proved in all cases that $\dl_{\alpha,j}=\dl_{\beta,j}$ and, so,  the proof of the claim is complete.
\end{proof}

The ending of the proof of the proposition is now clear. By Claim 3, $\dl_{\alpha,j}=\dl_{\beta,j}$ and, so, by Claim 2 (which uses Claim 1)  one deduces that $\wlslash_{\alpha,j}=\wlslash_{\beta,j}$ and $\wrslash_{\alpha,j}= \wrslash_{\beta,j}$ for every odd position $j$. As observed above this entails that
 ${\mathsf w}_{\mathbbold{q}}(\alpha) = {\mathsf w}_{\mathbbold{q}}(\beta)$ and, so, as $\alpha$ and $\beta$ are canonical forms, that $\alpha=\beta$.
\end{proof}

\section{Main results}\label{section:main results}
 The following fundamental theorem is an immediate consequence of Propositions~\ref{prop:nec_condit_eq_kterms} and~\ref{prop:suff_condit_eq_kterms} and of Corollary~\ref{cor:2-length_of_cf}.
\begin{theorem}\label{theo:equal_canonical_forms}
Let  $\alpha$ and $\beta$ be canonical $\kb$-terms. If  $\LG\models \alpha=\beta$, then $\alpha=\beta$.
\end{theorem}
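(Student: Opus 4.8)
The plan is to assemble the theorem directly from the three results just established, so the argument will be short: all the technical difficulty has already been absorbed into the $\mathbbold{q}$-outline and $\mathbbold{q}$-root machinery. First I would fix an integer $\mathbbold{q}\geq\mathrm{max}\{\mathbbold{q}_{\alpha},\mathbbold{q}_{\beta}\}$ so that both roots $\widetilde{\mathsf w}_\mathbbold{q}(\alpha)$ and $\widetilde{\mathsf w}_\mathbbold{q}(\beta)$ are defined, and then invoke Corollary~\ref{cor:2-length_of_cf}\ref{item:2-length_of_cf-a} to conclude from $\LG\models\alpha=\beta$ that $\alpha$ and $\beta$ share the same rank. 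This immediately splits the proof into the low-rank case and the genuinely interesting rank $2$ case.

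If the common rank is $0$ or $1$, then Corollary~\ref{cor:2-length_of_cf}\ref{item:2-length_of_cf-b} yields $\alpha=\beta$ at once, and there is nothing further to do; intrinsically this is where the rank $\le 1$ analysis of the companion paper (cited there) is being reused, so the present statement requires no new work in that range.

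The substantive case is when both $\alpha$ and $\beta$ have rank $2$. Here I would first apply Corollary~\ref{cor:2-length_of_cf}\ref{item:2-length_of_cf-c} to learn that they have the same $2$-length, and then apply Proposition~\ref{prop:nec_condit_eq_kterms} to the hypothesis $\LG\models\alpha=\beta$ to obtain the equality of roots $\widetilde{\mathsf w}_\mathbbold{q}(\alpha)=\widetilde{\mathsf w}_\mathbbold{q}(\beta)$. These are precisely the two hypotheses of Proposition~\ref{prop:suff_condit_eq_kterms}, which then delivers $\alpha=\beta$ and completes the proof.

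There is essentially no obstacle at this final stage, because the real effort lies upstream: in the \emph{necessity} direction (Proposition~\ref{prop:nec_condit_eq_kterms}, established via the test semigroup $S_{\alpha,\beta}=\mathcal{S}(G,L,\mathsf f)$ and the reduction of the rank $2$ situation to the rank $1$ results) and in the \emph{sufficiency} direction for canonical forms of equal $2$-length (Proposition~\ref{prop:suff_condit_eq_kterms}, proved through the block-by-block cancellation analysis of its Claims~1--3). The one point that needs care in the write-up is the order of application: the corollary must be used before the sufficiency proposition so that matching rank and $2$-length are known before the roots are compared. Once that bookkeeping is fixed, the three results chain together immediately, which is exactly why the theorem is stated as an immediate consequence.
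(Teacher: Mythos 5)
Your proposal is correct and follows exactly the paper's intended argument: the paper itself states the theorem as an immediate consequence of Propositions~\ref{prop:nec_condit_eq_kterms} and~\ref{prop:suff_condit_eq_kterms} and Corollary~\ref{cor:2-length_of_cf}, chained together precisely as you describe. No gaps.
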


The main results of this paper may now be easily deduced.

\begin{theorem}\label{theorem:LGkappa_word_problem_dec}
 The $\kb$-word problem for  ${\bf LG}$ is decidable.
\end{theorem}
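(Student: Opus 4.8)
The plan is to assemble the machinery developed above into a single decision procedure and to observe that every step of it is effective. Given two $\kb$-terms $\pi$ and $\rho$ over $A$, I want to decide whether $\LG\models\pi=\rho$. The whole strategy is to replace $\pi$ and $\rho$ by their $\LG$ canonical forms and then to reduce the semantic question $\LG\models\pi=\rho$ to a purely syntactic comparison of those canonical forms.

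First I would invoke Proposition~\ref{prop:rank2} to effectively compute $\kb$-terms $\pi'$ and $\rho'$ of rank at most $2$ that are $\Sigma$-equivalent to $\pi$ and $\rho$, respectively. Since $\Sigma$-equivalence of two terms forces $\LG$ to satisfy the corresponding identity, we have $\LG\models\pi=\pi'$ and $\LG\models\rho=\rho'$, so that $\LG\models\pi=\rho$ if and only if $\LG\models\pi'=\rho'$. Next I would run the rank $2$ canonical form reduction algorithm of Section~\ref{subsection:rank2canonical_form_algorithm} on $\pi'$ and $\rho'$ (falling back on the rank $0$/$1$ procedure recalled from~\cite{Costa&Nogueira&Teixeira:2015} whenever the rank drops to at most $1$), obtaining their $\LG$ canonical forms $\widehat\pi$ and $\widehat\rho$. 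This algorithm terminates, and each elementary change it performs — eliminations, agglutinations, shortenings and shifts — is an application of an identity of $\Sigma$; hence $\LG\models\pi'=\widehat\pi$ and $\LG\models\rho'=\widehat\rho$, and therefore $\LG\models\pi=\rho$ if and only if $\LG\models\widehat\pi=\widehat\rho$.

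Finally, because $\widehat\pi$ and $\widehat\rho$ are canonical forms, Theorem~\ref{theo:equal_canonical_forms} yields that $\LG\models\widehat\pi=\widehat\rho$ holds if and only if $\widehat\pi$ and $\widehat\rho$ are the \emph{same} $\kb$-term. Since deciding equality of two elements of $T_A^{\kb}$ — that is, equality modulo associativity of the multiplication, which can be tested by flattening both products and comparing the resulting sequences of limit terms and letters — is plainly decidable, the procedure terminates with the correct answer. As all three steps (reduction to rank $2$, canonical form computation, syntactic comparison) are effective, the $\kb$-word problem for $\LG$ is decidable; and by the equivalence of the $\kb$- and $\kappa$-word problems noted in Section~\ref{section:preliminaries}, the $\kappa$-word problem for $\LG$ is decidable as well.

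I do not expect any genuine obstacle at this stage: the substantive difficulties have all been discharged earlier — effectiveness of the rank reduction in Proposition~\ref{prop:rank2}, termination of the canonical form algorithm, and above all the confluence/uniqueness encoded in Theorem~\ref{theo:equal_canonical_forms}. The only things that must be checked here are entirely routine, namely that $\LG$-equivalence is preserved along each $\Sigma$-derivation (immediate, since $\Sigma$ consists of identities valid in $\LG$) and that the final syntactic test on $T_A^{\kb}$ is algorithmic (clear). Thus the theorem follows as a direct corollary of the preceding development.
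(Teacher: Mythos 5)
Your proposal is correct and follows essentially the same route as the paper: reduce to rank at most $2$ via Proposition~\ref{prop:rank2}, compute the $\LG$ canonical forms by the algorithm of Section~\ref{subsection:rank2canonical_form_algorithm}, and conclude by Theorem~\ref{theo:equal_canonical_forms} that validity over $\LG$ amounts to syntactic equality of the canonical forms. The paper additionally notes an alternative test via comparison of $\mathbbold{q}$-roots, but that is only a remark and your argument needs nothing further.
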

\begin{proof} The solution of the $\kb$-word problem for  ${\bf LG}$ consists in, given two \kbt s $\alpha$ and
$\beta$, to compute their respective canonical forms $\alpha'$ and $\beta'$.  Then  $\LG\models
\alpha=\beta$ if and only if  $\alpha'=\beta'$. Alternatively, in case $\alpha$ and $\beta$ are not rank 0,  to verify whether $\LG\models
\alpha=\beta$ one can compute rank 1 canonical forms or rank 2 semi-canonical forms $\alpha''$ and $\beta''$, equivalent to $\alpha$ and $\beta$, and verify whether  $\widetilde{\mathsf w}_\mathbbold{q}(\alpha)=\widetilde{\mathsf w}_\mathbbold{q}(\beta)$ for $\mathbbold{q}={\rm max}\{\mathbbold{q}_{\alpha},\mathbbold{q}_{\beta}\}$.
\end{proof}

\begin{theorem}\label{theorem:kappa_basis_LG}
 The set $\Sigma$ is a basis of $\kb$-identities for $\LG^{\kb}$.
\end{theorem}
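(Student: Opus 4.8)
The plan is to read the statement ``$\Sigma$ is a basis of $\kb$-identities for $\LG^{\kb}$'' as the assertion that, for all $\kb$-terms $\alpha,\beta\in T_A^{\kb}$, one has $\Sigma\vdash\alpha=\beta$ if and only if $\LG\models\alpha=\beta$; by completeness of equational logic this is exactly the statement that the $\kb$-variety defined by $\Sigma$ coincides with the $\kb$-variety $\LG^{\kb}$ generated by $\LG$. The forward implication is the easy (soundness) direction: every identity of $\Sigma_\Se$ holds over $\Se\supseteq\LG$, and the extra identity $(x^\omega yx^\omega)^\omega=x^\omega$ is precisely the one defining $\LG$, so $\LG\models\Sigma$; consequently any syntactic consequence of $\Sigma$ holds over $\LG$, as already observed at the start of Section~\ref{section:cis}. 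The whole content of the theorem therefore lies in the converse (completeness) implication.

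For completeness I would argue as follows. Let $\alpha,\beta$ be $\kb$-terms with $\LG\models\alpha=\beta$. First apply Proposition~\ref{prop:rank2} to replace $\alpha$ and $\beta$ by $\Sigma$-equivalent $\kb$-terms of rank at most $2$, and then run the reduction algorithm producing $\LG$ canonical forms: for rank $0$ and $1$ the relevant reduction, recalled from~\cite{Costa&Nogueira&Teixeira:2015}, uses only $\Sigma_\Se\subseteq\Sigma$, while for rank $2$ one applies the algorithm of Section~\ref{subsection:rank2canonical_form_algorithm}. The key observation is that every elementary step of that algorithm --- eliminations, agglutinations, shortenings, extended shifts and the auxiliary expansions --- is carried out by applying a $\kb$-identity of $\Sigma$ or one of its derived consequences \eqref{eq:inside-outside}, \eqref{eq:id_shortening}--\eqref{eq:id_agglutination} and Corollary~\ref{cor:id_shortening2}, all of which are themselves consequences of $\Sigma$. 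Hence the output $\alpha'$ (resp.\ $\beta'$) is not merely equal to $\alpha$ (resp.\ $\beta$) over $\LG$ but genuinely $\Sigma$-equivalent to it, that is, $\Sigma\vdash\alpha=\alpha'$ and $\Sigma\vdash\beta=\beta'$.

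I would then close the argument by invoking Theorem~\ref{theo:equal_canonical_forms}. Since $\Sigma$-equivalence implies equality over $\LG$ (the soundness direction above), from $\Sigma\vdash\alpha=\alpha'$, $\Sigma\vdash\beta=\beta'$ and $\LG\models\alpha=\beta$ we obtain $\LG\models\alpha'=\beta'$; as $\alpha'$ and $\beta'$ are $\LG$ canonical forms, Theorem~\ref{theo:equal_canonical_forms} forces them to be the very same $\kb$-term, $\alpha'=\beta'$. Chaining the derivations yields $\Sigma\vdash\alpha=\alpha'=\beta'=\beta$, as required. Combined with soundness this gives the equivalence $\Sigma\vdash\alpha=\beta\iff\LG\models\alpha=\beta$, and hence the theorem.

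The main obstacle is not in this final wrap-up, which is essentially bookkeeping, but in the results it relies on: that every term can be driven to a canonical form using only $\Sigma$ (Proposition~\ref{prop:rank2} together with the algorithm) and, above all, that distinct canonical forms are separated by $\LG$ (Theorem~\ref{theo:equal_canonical_forms}, resting on Propositions~\ref{prop:nec_condit_eq_kterms} and~\ref{prop:suff_condit_eq_kterms} and the delicate $\mathbbold{q}$-root analysis of Lemma~\ref{lemma:oc_crucial_variables}). The one point that requires a little care in the wrap-up itself is to make explicit that each algorithmic rewriting is \emph{licensed by $\Sigma$} and not merely justified by validity over $\LG$; once that verification is made, the proof is immediate.
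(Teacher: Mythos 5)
Your proposal is correct and takes essentially the same route as the paper's own proof: soundness from the fact that $\LG$ satisfies every identity of $\Sigma$, and completeness by reducing $\alpha$ and $\beta$ to $\Sigma$-equivalent canonical forms via Proposition~\ref{prop:rank2} and the rank 1 and rank 2 reduction algorithms, then applying Theorem~\ref{theo:equal_canonical_forms} to force $\alpha'=\beta'$ and chaining the derivations. Your added emphasis that each algorithmic rewriting must be licensed by $\Sigma$ (not merely valid over $\LG$) is exactly the point the paper relies on implicitly, so there is no gap.
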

\begin{proof} We have to prove that, for all \kbt s $\alpha$ and $\beta$,
$\LG\models \alpha=\beta$ if and only if $\Sigma \vdash \alpha=\beta$. The only if part follows from the fact
that $\LG$ verifies all the $\kb$-identities of $\Sigma$. For the if part recall that, by
Proposition~\ref{prop:rank2} and by the rank 1 and rank 2 canonical form algorithms, there exist canonical forms
$\alpha'$ and $\beta'$ that may be computed from  $\alpha$ and $\beta$ using the
$\kb$-identities of $\Sigma$. Therefore, if $\LG\models \alpha=\beta$ then $\LG\models \alpha'=\beta'$ and
so, by Theorem~\ref{theo:equal_canonical_forms}, $\alpha'=\beta'$. Since $\Sigma \vdash \alpha=\alpha'$ and
$\Sigma \vdash \beta=\beta'$ it follows by transitivity that $\Sigma \vdash \alpha=\beta$.
\end{proof}
\section*{Acknowledgments}
This work  was supported by the European Regional Development Fund, through the programme COMPETE, and by the
Portuguese Government through FCT -- \emph{Funda\c c\~ao para a Ci\^encia e a Tecnologia}, under the project
 PEst-C/MAT/UI0013/2014.


\end{document}